\theoremstyle{plain}
\newtheorem{definition}{Definition}[section]
\newtheorem{theorem}[definition]{Theorem}
\newtheorem*{theorem*}{Theorem}
\newtheorem{remark}[definition]{Remark}
\newtheorem*{remark*}{Remark}
\newtheorem*{sideremark*}{Side Remark}\newtheorem*{mt*}{Main Theorem}
\newtheorem*{claim*}{Claim}
\newtheorem*{q*}{Question}
\newtheorem{lemma}[definition]{Lemma}
\newtheorem{corollary}[definition]{Corollary}
\newtheorem*{corollary*}{Corollary}
\newtheorem*{proposition*}{Proposition}
\newtheorem{proposition}[definition]{Proposition}
\newcommand{\R}{\mathbb{R}}
\newcommand{\C}{\mathbb{C}}
\newcommand{\na}{\nabla}
\newcommand{\dd}{{\rm d}}
\newcommand{\p}{\partial}
\newcommand{\e}{\epsilon}
\newcommand{\G}{\Gamma}
\newcommand{\B}{\mathbf{B}}
\newcommand{\bb}{{\mathfrak{b}}}
\newcommand{\qq}{{\mathfrak{q}}}
\newcommand{\mm}{{\mathfrak{m}}}
\newcommand{\nn}{{\mathfrak{n}}}
\newcommand{\proj}{\mathbf{Proj}}
\newcommand{\sph}{{\mathbf{S}^{d-1}}}
\def\Xint#1{\mathchoice
{\XXint\displaystyle\textstyle{#1}}%
{\XXint\textstyle\scriptstyle{#1}}%
{\XXint\scriptstyle\scriptscriptstyle{#1}}%
{\XXint\scriptscriptstyle\scriptscriptstyle{#1}}%
\!\int}
\def\XXint#1#2#3{{\setbox0=\hbox{$#1{#2#3}{\int}$ }
\vcenter{\hbox{$#2#3$ }}\kern-.6\wd0}}
\def\dashint{\Xint-}
\newcommand{\maino}{\mathring{J}_{\frac{d}{2}-1}^n}
\newcommand{\maine}{\mathring{J}_{\frac{d}{2}}^n}
\newcommand{\remo}{\mathscr{R}_{\frac{d}{2}-1}^n}
\newcommand{\reme}{\mathscr{R}_{\frac{d}{2}}^n}
\title{Expected signature of stopped Brownian motion on $d$-dimensional $C^{2, \alpha}$-domains has finite radius of convergence everywhere: $2\leq d \leq 8$}
\address{Siran Li: School of Mathematical Sciences, Shanghai Jiao Tong University, No.~6 Science Buildings,
800 Dongchuan Road, Minhang District, Shanghai, China (200240)}
\address{Siran Li: Key Laboratory of Scientific and Engineering Computing (Ministry of Education), Shanghai Jiao Tong University, No.~6 Science Buildings,
800 Dongchuan Road, Minhang District, Shanghai, China (200240)}
\author{Siran Li}
\address{Siran Li: New York University -- Shanghai, Office 1146, 1555 Century Avenue, Pudong District, Shanghai, China (200122)}
\address{Siran Li: NYU-ECNU Institute of Mathematical Sciences, Room 340, Geography Building, 3663 North Zhongshan Road, Shanghai, China (200062)}
\email{\texttt{siran.li@sjtu.edu.cn}}
\author{Hao Ni}\address{Hao Ni: Department of Mathematics University College, Gower Street, WC1E 6BT, London, UK}
\email{\texttt{h.ni@ucl.ac.uk}}
\keywords{Rough path; expected signature; stopped Brownian motion; first exit time}
\subjclass[2020]{Primary: 60L20; Secondary: 35R45}
\date{\today}
\begin{document}

\begin{abstract}
A fundamental question in rough path theory is whether the expected signature of a geometric rough path completely determines the law of signature. One sufficient condition is that the expected signature has infinite radius of convergence, which is satisfied by various stochastic processes on a fixed time interval, including the Brownian motion. In contrast, for the Brownian motion stopped upon the first exit time from a bounded domain $\Omega$, it is only known that the radius of convergence for the expected signature on sufficiently regular $\Omega$ is strictly positive everywhere, and that  the radius of convergence is finite at some point when $\Omega$ is the $2$-dimensional unit disc  (\cite{boedihardjo2021expected}).

In this paper, we prove that on any bounded $C^{2,\alpha}$-domain $\Omega \subset \R^d$ with $2\leq d \leq 8$, the expected signature of the stopped Brownian motion has finite radius of convergence everywhere. %This provides a complete solution to the question of finiteness for the radius of convergence in dimension $2$. 
A key ingredient of our proof is the introduction of a ``domain-averaging hyperbolic development'' (see Definition~\ref{def: domain avg}), which allows us to symmetrize the PDE system for the hyperbolic development of expected signature by averaging over rotated domains. 
\end{abstract}

\maketitle

\section{Introduction}

The theory of rough paths is a generalisation of the classical control theory. It sets forth a framework that makes sense of solutions to differential equations driven by irregular signals; for example, the Brownian motion.

A fundamental concept in rough path theory is the \emph{signature} of a path. %which transforms the path to a group-like element in the tensor algebra space. 
Playing a similar role to that of the moment generating function of a random variable, the \emph{expected signature} is of both theoretical and practical significance. For example, computation for the expected signature of the Brownian motion on $[0, 1]$ leads to the notion of \emph{cubature on Wiener space}, which is a high order numerical method for high dimensional SDEs (stochastic differential equations) and semi-elliptic PDEs (partial differential equations); see \cite{lyons2004cubature}. Recently, in \cite{chevyrev2018signature} a metric for laws of stochastic processes has been proposed based on the normalised expected signature. It can be used for nonparametric two-sample hypothesis tests for laws of stochastic processes, and potentially has much wider applications in machine learning.

One central question concerning the expected signature, known as the \emph{moment problem}, asks if the expected signature uniquely determines the law of random signature. A sufficient condition for the affirmative answer has recently been identified for the moment problem by Chevyrev--Lyons (\cite{cl}, Proposition~6.1): the radius of convergence of expected signature is infinite. %Throughout this work, we denote by $\Phi_n$ the $n^{\text{th}}$ level of the expected signature; the largest number $\Lambda \geq 0$ such that $\sum_{n=0}^\infty \|\Phi_n\|\lambda^n$ converges for all $|\lambda|<\Lambda$ is the radius of convergence of $\Phi$, where $\|\Phi_n\|$ is a suitable algebra norm on the tensor algebra space. See \S\ref{sec: d dim} for details. %As to be shown in later parts of the paper, the radius of convergence has other effective characterisations.

The infiniteness of radius of convergence for the expected signature has been verified for several popular stochastic processes on a fixed time horizon, \emph{e.g.}, fractional Brownian motions with Hurst parameter in $\left]\frac{1}{4}, 1\right]$ (see \cite{cl, passeggeri2016some}). It is nonetheless considerably challenging to check whether this is satisfied by processes \emph{up to a random time}. Even for the simplest case of the Brownian motion up to the first exit time from a bounded domain $\Omega \subset \R^d$ (in brief, ``\emph{stopped Brownian motion}'' in the sequel), it has remained open ever since it was proposed in \cite{lyons2015expected}. 

As a first step towards the aforementioned problem, it was established in \cite{lyons2015expected} that, under suitable regularity assumptions for the domain (see Proposition~\ref{prop: schauder} and Remark~\ref{rem: elliptic}), the expected signature of the stopped Brownian motion has a geometric upper bound for its decay rate. This is achieved by deriving a system of nested PDEs  (\emph{i.e.}, partial differential equations graded by a parameter $n 
\in \mathbb{N}$) satisfied by the expected signature and applying the standard boundary regularity theory for elliptic PDEs. Despite its insufficiency to resolve the moment problem, the geometric upper bound warrants the positivity of the radius of convergence.

Recently, Boedihardjo--Diehl--Mezzarobba--Ni \cite{boedihardjo2021expected} showed that the  stopped Brownian motion on the $2$-dimensional unit disc has finite radius of convergence. The proof in \cite{boedihardjo2021expected} relies crucially  on the technique of \emph{hyperbolic development}, which plays an essential role in  Hambly--Lyons' proof  of the unique determination of paths of bounded variation by the signature; see  \cite{hl}. Indeed, by taking a morphism (which appears in \cite{hl}) from the tensor algebra space to  $\mathfrak{gl}(3;\R)$  and exploiting the partial symmetries of the resulting matrix-valued PDE systems,  a nested system of three ODEs has been obtained in \cite{boedihardjo2021expected}. These ODEs can be solved explicitly, from which one deduces the finiteness of radius of convergence. Nonetheless,  \cite{boedihardjo2021expected} relies heavily on the rotational invariance of the domain, thus making it difficult to be extended to general domains. In addition, even for the case of the unit disc, it remains unknown if the stopped Brownian motion starting at a point apart from the centre has finite radius of convergence.

%In this paper, we shall give a complete solution to the finiteness problem of the expected signature of stopped Brownian motions on $2$-dimensional bounded domains under mild regularity assumptions:

The main result of our paper gives a complete solution to the finiteness problem of the expected signature of stopped Brownian motions on bounded domains under mild regularity assumptions, for all dimensions up to $8$:
\begin{theorem}\label{thm: main}
Let $\Omega$ be bounded  $C^{2,\alpha}$-domain in $\R^d$; $2 \leq d \leq 8$. The expected signature $\Phi$ of a Brownian motion stopped upon the first exit time from $\Omega$ has finite radius of convergence at every point in $\Omega$. 
\end{theorem}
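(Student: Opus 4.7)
The plan is to extend the Hambly--Lyons hyperbolic-development approach of \cite{boedihardjo2021expected} from the disc to general domains via a new rotational-averaging construction. Fix $x_0 \in \Omega$; after translating we may assume $x_0 = 0$, and we pick $r > 0$ with $\overline{B_r(0)} \subset \Omega$. For each $\lambda > 0$ the hyperbolic development is a multiplicative morphism $F_\lambda \colon T((\R^d)) \to \mathfrak{gl}(d+1;\R)$ (in fact into $\mathfrak{so}(d,1)$); applying $F_\lambda$ termwise to the expected signature produces a matrix-valued field $\widehat{\Phi}_\lambda(x) := F_\lambda(\Phi(x))$ satisfying an elliptic linear matrix PDE system on $\Omega$ with Dirichlet data the identity (obtained by applying $F_\lambda$ to the Kolmogorov PDE for $\Phi$). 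Finite radius of convergence of $\Phi$ at $0$ is equivalent to the existence of a finite critical $\lambda_c > 0$ past which the series defining $\widehat{\Phi}_\lambda(0)$ diverges, so the target is to produce such a $\lambda_c$.

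The key new ingredient is the domain-averaged hyperbolic development, which produces a spherically symmetric surrogate for $\widehat{\Phi}_\lambda$ on $B_r(0)$ while preserving blow-up information at the origin. Concretely, one takes the natural representation $\rho \colon \mathrm{SO}(d) \to \mathrm{GL}(d+1;\R)$ (acting on the $\R^d$-factor of $\R^{d+1}$) and, using the rotational invariance of Brownian motion together with the $\mathrm{SO}(d)$-equivariance of $F_\lambda$ via $\rho$, one defines a matrix field $\overline{\Phi}_\lambda$ on $B_r(0)$ by integrating a family of rotated, $\rho$-conjugated copies of $\widehat{\Phi}_\lambda^{R\Omega}$ (the hyperbolic developments on rotated domains $R\Omega$) against normalised Haar measure on $\mathrm{SO}(d)$. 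The construction is arranged so that $\overline{\Phi}_\lambda$ is rotationally equivariant, the value $\overline{\Phi}_\lambda(0)$ contains $\widehat{\Phi}_\lambda(0)$ as an essential component (so that blow-up of the former at $\lambda_c$ forces blow-up of the latter), and $\overline{\Phi}_\lambda$ solves an $\mathrm{SO}(d)$-invariant elliptic matrix system on $B_r(0)$.

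On the ball this rotationally symmetric system reduces --- via $\mathrm{SO}(d)$-isotypic decomposition of $\mathfrak{gl}(d+1;\R)$ --- to a finite-dimensional ODE in the radial variable $s = |x|$ in which the Laplacian appears as $\partial_s^2 + \tfrac{d-1}{s}\partial_s$. Because $F_\lambda$ lands in $\mathfrak{so}(d,1)$ the isotypic content is severely restricted, and one can extract a distinguished scalar radial profile governed by a Bessel-type ODE with effective order $\nu = (d-2)/2$, parametrised by $\lambda$. Finite blow-up of $\widehat{\Phi}_\lambda(0)$ is then reduced to showing that, as $\lambda$ grows past some $\lambda_c$, the radial profile develops a zero within $(0,r)$, thereby forcing a pole in a conjugate matrix quantity; the restriction $2 \leq d \leq 8$ enters precisely here, through quantitative estimates on the first positive zero of the relevant Bessel combination of order $(d-2)/2$.

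The main obstacle, and where the bulk of the technical work will lie, is the precise construction of $\overline{\Phi}_\lambda$ and the verification that the $\rho$-conjugation averaging yields a genuinely rotationally symmetric matrix PDE; this rests on the compatibility between the $\mathrm{SO}(d)$-action on $T((\R^d))$ and conjugation by $\rho$ through $F_\lambda$, and on an argument (Fatou-type or spectral) extracting blow-up of $\widehat{\Phi}_\lambda(0)$ from blow-up of $\overline{\Phi}_\lambda(0)$. A secondary obstacle is the sharp dimensional threshold $d = 8$, which appears to be dictated by accessible Bessel-zero estimates and would require significantly finer special-function analysis to transcend. Throughout, the $L^\infty$ geometric bounds of \cite{lyons2015expected} (available via Proposition~\ref{prop: schauder} under the $C^{2,\alpha}$ assumption) on each $\widehat{\Phi}_\lambda^{R\Omega}$ ensure that the averaging integral is absolutely convergent for $\lambda < \lambda_c$.
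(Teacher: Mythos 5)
Your proposal follows the same overall strategy as the paper: average the hyperbolic development of $\Phi$ over rotated copies of $\Omega$ against Haar measure on $SO(d)$, observe that the averaged object satisfies the same elliptic system on a small ball $\B(0,\e)\subset\Omega$, reduce by symmetry to a radial Bessel-type ODE, and force blow-up of the original development at the origin from blow-up of the averaged one. The preservation-of-blow-up step and the equivariance of the averaging are exactly the paper's Lemmas on $\overline{\mathcal{M}_{\lambda,\e}}$.

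However, there is a genuine gap at the heart of the blow-up argument. After averaging, the radial ODE on $[0,\e]$ is only half of a boundary value problem: the regularity conditions at $r=0$ survive, but the Dirichlet data at $r=\e$ are \emph{unknown} functions $\mathfrak{p}=h^{(1)}_\lambda(\e)$, $\mathfrak{q}=h^{(d+1)}_\lambda(\e)$ of $\lambda$ and $\e$ (the domain is not a ball, so there is no boundary condition ${\bf 1}$ on $\p\B(0,\e)$). Solving the ODE gives $h^{(d+1)}_\lambda(0)=\mathcal{N}_\e(\lambda)/\Theta_\e(\lambda)$ with $\mathcal{N}_\e$ depending linearly on $(\mathfrak{p},\mathfrak{q})$; locating a zero $\lambda_\star$ of the Wronskian denominator $\Theta_\e$ (your ``pole in a conjugate matrix quantity'') proves nothing unless one can rule out that $(\mathfrak{p},\mathfrak{q})$ conspire to make the numerator vanish at the same $\lambda_\star$. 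The paper closes this with a geometric input you do not mention: by Lyons--Xu, the hyperbolic development of any bounded-variation path lies on the upper sheet of the hyperboloid $\mathbb{H}_d$, whence (after passing to the limit along dyadic approximations of Brownian motion and taking expectations) $\mathfrak{q}\geq 1$ and $\mathfrak{q}\geq|\mathfrak{p}|$; these uniform lower bounds, combined with explicit (computer-assisted) estimates on the two Bessel combinations multiplying $\mathfrak{p}$ and $\mathfrak{q}$, show $\mathcal{N}_\e(\lambda_\star)\neq 0$. Without this ingredient your argument does not conclude. Two smaller points: for $3\leq d\leq 8$ the paper additionally averages with the reflected domain $\Omega_\ast$ to kill the middle components of the averaged development (your isotypic-decomposition claim would need to be made precise to replace this), and the restriction $d\leq 8$ enters not through Bessel-zero estimates per se but because $(d-1)(d-9)<0$ makes the parameters $\beta_\pm,\eta_\pm$ complex conjugates, which is what allows the Wronskian to be written as an imaginary part and guarantees it has a real zero in $]2.5/\e,3/\e[$; for $d\geq 9$ the solution changes qualitatively.
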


The strategy of our proof is outlined as follows. First, by averaging the hyperbolic development of expected signature over the rotated images of the domain, we construct a quantity $\overline{\mathcal{H}_{\lambda, \e}}(z)$, the ``\emph{domain-averaging development}'', that has the following features:
\begin{itemize}
    \item 
$\overline{\mathcal{H}_{\lambda, \e}}(z)$ is rotationally  invariant (with respect the domain rotations);
\item
$\overline{\mathcal{H}_{\lambda, \e}}(z)$ satisfies the same PDE as that for $\mathcal{H}_{\lambda, \Omega}$; and
\item
$\overline{\mathcal{H}_{\lambda, \e}}(z)$ inherits the finiteness of radius of convergence from the non-averaged hyperbolic development $\mathcal{H}_{\lambda, \Omega}$ of the expected signature. Thus, the operation of domain averaging preserves the lower bound for radius of convergence of the hyperbolic development. 
\end{itemize}
Working locally near $z \in \Omega$, we arrive at the same PDE for $\overline{\mathcal{H}_{\lambda, \e}}(z)$ as for its  non-averaged analogue on a small ball around $z$. Loosely speaking, ``half'' of the boundary conditions will gone missing for the PDE for $\overline{\mathcal{H}_{\lambda, \e}}(z)$; nevertheless, geometric properties of the hyperbolic development enable us to establish uniform lower bounds for the remaining component of $\overline{\mathcal{H}_{\lambda, \e}}(z)$.

%In passing, we remark that the ``domain-averaging development'' may be adaptable to arbitrary dimensions. It can be viewed as a symmetrization argument for PDEs.

\noindent
{\bf Organisation.} The remaining parts of the paper is organised as follows. \S\ref{sec: prelim} summarises background materials on the rough path theory, expected signature, and hyperbolic development. In \S\ref{sec: lemmata} we discuss the regularity theory and  probabilistic consequences of the PDE associated to the expected signature. The crucial technique in this work (a symmetrization argument) shall be introduced in \S\ref{sec: symm}. Our main result, Theorem~\ref{thm: main}, is proved in \S\ref{sec: proof of main thm} for $d=2$ and in \S\ref{sec: high d} for $3\leq d \leq 8$. Some technical computations are given in the appendix.

We denote $E := \R^d$ throughout this paper.

\section{Preliminaries}\label{sec: prelim}
%\subsection{Tensor algebra space}
%In this subsection we introduce the tensor algebra space where the signature of a path lives. 
 %The developments in this section remain valid in higher dimensions, \emph{i.e.}, we can take $E=\R^d$ for $d=2,3,4,\ldots$ here.

\subsection{Signature} We first introduce the tensor algebra space over $E$:
\begin{definition}
A formal $E$-tensor series is a sequence of tensors $\left( a_{n}\in
E^{\otimes n}\right) _{n\in \mathbb{N}}$ which we write as $a=\left(
a_{0},a_{1},\ldots \right) $. There are two binary operations on $E$-tensor series, addition $+$ and product $\otimes$ --- let $\mathbf{a}=(a_{0},a_{1},\ldots)$ and $\mathbf{b}%
=(b_{0},b_{1},\ldots)$ be $E$-tensor series; then $
\mathbf{a}+\mathbf{b}:=(a_{0}+b_{0},a_{1}+b_{1},\ldots)$ and $\mathbf{a}\otimes \mathbf{b}:=(c_{0},c_{1},\ldots)$, where $c_{n}:=\sum_{k=0}^{n}a_{k}\otimes b_{n-k}$ for each $n$.
%The product $\mathbf{a}\otimes \mathbf{b}$ is also denoted by $\mathbf{a}\mathbf{b}$.

In addition, we write $\mathbf{1}:=(1,0,\ldots)$, $\mathbf{0}:=(0,0,\ldots)$, and $\lambda \mathbf{a}:=(\lambda a_{0},\lambda a_{1},\ldots)$ for $\lambda \in \mathbb{R}$. Also, $T\left( \left( E\right) \right)$ denotes the vectorspace of formal $E$-tensor series.
\end{definition}

The space $T\left( \left( E\right) \right) $ equipped with $+$ and $\otimes $ is an associative unital algebra over $\mathbb{R}
$. An element  $\mathbf{a}=(a_{0},a_{1},\ldots) \in T\left( \left( E\right)
\right) $ is invertible if and only if $a_{0}\neq 0$. In this case, its inverse is 
\begin{equation*}
\mathbf{a}^{-1}=\frac{1}{a_{0}}\sum_{n\geq 0}\left(\mathbf{1}-\frac{\mathbf{a}}{%
a_{0}}\right)^{n}.
\end{equation*}%
It is well defined because, at any given degree, only finitely many terms of the summation are non-zero. In particular, the
subset $\left\{\mathbf{a}\in T\left( \left( E\right) \right) :\,a_{0}=1\right\}$ forms a group.

\begin{definition}
Let $n\geq 1$ be an integer. Let $B_{n}:=\{\mathbf{a}
=(a_{0},a_{1},\ldots):\,a_{0}=\ldots=a_{n}=0\}.$ The truncated tensor algebra $%
T^{(n)}(E)$ of order $n$ over $E$ is defined as the quotient algebra
\begin{equation*}
T^{(n)}(E)=T\left( \left( E\right) \right) /B_{n}.
\end{equation*}%
The canonical epimorphism $T\left( \left( E\right) \right) \rightarrow
T^{(n)}(E)$ is denoted by $\proj_{n}$.
\end{definition}

We collect below a few basic facts about geometric rough paths. See \cite{lyons2007differential, coutin2002stochastic, friz2006note} for details.

For $p \geq 1$ and $J \subset \R$ an open interval, one can enhance a path $X: J \rightarrow E$ of finite $1$-variation to a function defined on the simplex $\left\{(s, t):\, s \leq t, s,t \in J\right\}$ with values in $T^{(\lfloor p \rfloor)}(E)$ via its iterated integrals; \emph{i.e.}, $S^{\lfloor p \rfloor}(X) = \left(1, X^{1}_{J}, \cdots X^{\lfloor p \rfloor}_{J}\right)$ where
\begin{eqnarray*}
X^{n}_{s, t} = \underset{s \leq u_1 \leq \cdots \leq u_n \leq t }{\int \cdots \int} \,\dd X_{u_{1}} \otimes \,\dd X_{u_{2}}\otimes \cdots \otimes  \,\dd X_{u_{n}},
\end{eqnarray*}
for $n = 1, 2, \cdots, \lfloor p \rfloor$. We call the enhancement of $X$ in $T^{(\lfloor p \rfloor)}(E)$ a \emph{smooth rough path}. The signature of $X$ is the collection $S(X) = \left(1, X^{1}_{J}, \cdots, X^{n}_{J}, \cdots\right)$ of all the iterated integrals of $X$.
\begin{definition}[$p$-variation distance]
Let $p \geq 1$. For smooth rough paths $\mathbf{X}, \mathbf{Y}:J \to T^{(\lfloor p \rfloor)}(E)$, the $p$-variation metric between $\mathbf{X}$ and $\mathbf{Y}$ is given by
\begin{eqnarray*}
d(\mathbf{X}, \mathbf{Y}) = \max_{k = 1, \cdots \lfloor p \rfloor} \left(\sup_{\mathcal{D}}\sum_{i = 1}^{n-1} \left\vert \mathbf{X}_{u_i, u_{i+1}}^{k} - \mathbf{Y}_{u_{i}, u_{i+1}}^{k} \right\vert^{\frac{p}{k}}\right)^{\frac{k}{p}} + \sup_{t \in J} \left\vert X^{1}_t - Y^{1}_{t}\right\vert,
\end{eqnarray*}
with the supremum taken over all finite divisions $\mathcal{D} = (u_{1}, u_{2}, \cdots, u_{n})$ of $J$.
\end{definition}

\begin{definition}[Geometric rough path]
$\mathbf{X}: J \rightarrow T^{(\lfloor p \rfloor)}(E)$ is said to be a geometric $p$-rough path if there exists a sequence of smooth rough path $(\mathbf{X}^{n})_{n\in\mathbb{N}}$ such that $\mathbf{X}$ is the limit of $\mathbf{X}^{n}$ in the $p$-variation metric. 
The space of geometric $p$-rough paths is denoted as $G\Omega_p(J, E)$.
\end{definition}

\subsection{Stopped Brownian motion and its expected signature}\label{sec: expected signature}
Denote by $(e_{1}, \ldots, e_d)$ the canonical basis for $E=\mathbb{R}^{d}$. Let $\left( B_{t}\right) _{t\geq 0}$ be a standard Brownian motion on $E$ under a probability space $(\mathbf{\Omega}, \mathcal{F}, P^{z})$ with its canonical filtration $\mathcal{F} = \left(\mathcal{F}_{t}\right)_{t\geq 0}$, where $P^{z}(B_{0} = z) = 1$ for $z \in E$.

\begin{definition}
Let $\Omega $ be a domain (\emph{i.e.}, a connected open set) in $E$. Then $$\tau
_{\Omega }=\inf \{t\geq 0:B_{t}\in E\setminus\Omega\}$$ is the first exit time of
Brownian motion from $\Omega$.
\end{definition}

\begin{definition}
Let $J$ be a compact time interval. Let $B: J \rightarrow E$ be an  $E$-valued Brownian motion path. The signature of $B$, denoted by $S(B_{J})$, is the element $(1,B^{1},...,B^{n},...)$ of $T\left( \left( E\right)
\right) $ defined for each $n\geq 1$ as follows:%
\begin{equation*}
B^{n}_{J}=\underset{\underset{u_{1},\ldots,u_{n}\in J}{u_{1}<\ldots<u_{n}}}{\int \cdots \int }\,\dd B_{u_{1}}\otimes \cdots \otimes \,\dd B_{u_{n}},
\end{equation*}
where the integral is taken in the Stratonovich sense. The truncated signature of $B$ of order $n$ is denoted by $S^{n}(B_{J})$, \emph{i.e.}, $%
S^{n}(B_{J}) = \proj_{n}(S(B_{J}))$.
\end{definition}

One also writes
\begin{equation*}
\mathbf{B}_{J}=\sum_{n=0}^{\infty }\sum_{i_{1},\ldots i_{n}\in \{1,\ldots
d\}}\left( \underset{\underset{u_{1},\ldots,u_{n}\in J}{u_{1}<\ldots<u_{n}}}{\int
\cdots \int }\,\dd B_{u_{1}}^{(i_{1})}\otimes \ldots \otimes
\,\dd B_{u_{n}}^{(i_{n})}\right) e_{i_{1}}\otimes \ldots \otimes e_{i_{n}}
\end{equation*}
where $$ \underset{\underset{u_{1},\ldots,u_{n}\in J}{u_{1}<\ldots<u_{n}}}{\int
\cdots \int }\,\dd B_{u_{1}}^{(i_{1})}\otimes \ldots \otimes\,\dd B_{u_{n}}^{(i_{n})} =: \proj^{I}(S(B_{J}))$$ is the co-ordinate signature of Brownian motion indexed by $I:= (i_{1}, \ldots, i_{n})$. The (Stratonovich) signature is defined for \emph{a.e.} Brownian path $B$ and for all pairs of times $(s, t)$ where $s \leq t$. 

\begin{lemma}[See {\cite{friz2005approximations}}]\label{Lemma_BM_GRP}
For $T >0$, the Stratonovich signature of Brownian motion $t \mapsto (S(B_{0, t}))_{t \in [0, T]}$ is a geometric $p$-rough path almost surely for $p>2$. 
\end{lemma}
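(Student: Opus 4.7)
The plan is to realize the Stratonovich signature $\mathbf{B}_{[0,T]}$ as the almost-sure $p$-variation limit of the smooth rough paths obtained from piecewise-linear approximations. Fix $p \in (2,3)$; since $G\Omega_p(J,E) \hookrightarrow G\Omega_{p'}(J,E)$ for any $p' \geq p$, it suffices to work in this range, so $\lfloor p \rfloor = 2$ and one only needs to track the increments $\mathbf{B}^1_{s,t} = B_t - B_s$ and the Stratonovich iterated integral $\mathbf{B}^2_{s,t} = \int_s^t (B_u - B_s) \otimes \circ\,\dd B_u$.

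Let $B^n$ denote the piecewise-linear interpolation of $B$ along the dyadic partition $\mathcal{D}_n = \{kT/2^n: 0 \leq k \leq 2^n\}$. Each $B^n$ has bounded variation, so its smooth enhancement $\mathbf{B}^n := S^{2}(B^n) \in T^{(2)}(E)$ is well defined, and $\mathbf{B}^n$ is trivially a smooth rough path in the sense of the excerpt. The substantive step is to establish the Kolmogorov-type moment bounds
\begin{equation*}
\mathbb{E}\bigl[|\mathbf{B}^{n,k}_{s,t}|^{q}\bigr] \leq C_{q} |t-s|^{kq/2}, \qquad \mathbb{E}\bigl[|\mathbf{B}^{n,k}_{s,t} - \mathbf{B}^{k}_{s,t}|^{q}\bigr] \leq C_{q,n} |t-s|^{kq/2},
\end{equation*}
uniformly in $n$ (for $k=1,2$ and all $q \geq 1$), together with $C_{q,n} \to 0$ as $n \to \infty$ at a polynomial rate in the mesh size. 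Level one is immediate from Gaussianity; at level two, one expands $\mathbf{B}^{n,2}_{s,t}$ into contributions from complete and partial subintervals of $\mathcal{D}_n$ and observes that (i) on each complete subinterval the symmetric part of the piecewise-linear iterated integral equals $\tfrac{1}{2} \Delta B \otimes \Delta B$, which is precisely the Stratonovich correction, so it is exactly reproduced in the limit; and (ii) the antisymmetric (Lévy-area) part converges in $L^q$ to Lévy's stochastic area by the usual Itô isometry combined with BDG inequalities.

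With these two-parameter moment bounds in hand, the passage from $L^q$ convergence to $p$-variation convergence is achieved through the Kolmogorov continuity criterion for rough paths (see Friz--Victoir). Concretely, because the exponent $kq/2$ on $|t-s|$ exceeds $k$ once $q$ is taken large, Garsia--Rodemich--Rumsey yields for any $p > 2$ the uniform bound $\sup_n \|\mathbf{B}^n\|_{p\text{-var};[0,T]} < \infty$ almost surely and the convergence $d(\mathbf{B}^n, \mathbf{B}) \to 0$ along a subsequence, hence (by a standard Borel--Cantelli argument) almost surely along the full sequence.

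The main obstacle is the level-two convergence: identifying the limit of the smooth iterated integrals $\mathbf{B}^{n,2}_{s,t}$ with the Stratonovich object $\mathbf{B}^{2}_{s,t}$ quantitatively enough to drive the Kolmogorov estimate. Everything reduces to this because level one is just uniform convergence of continuous Gaussian paths, which is classical. Once the level-two $L^q$ rate is obtained (requiring care at the boundary subintervals containing $s$ or $t$, which are handled by Hölder-continuity estimates for $B$), the rough-path Kolmogorov criterion delivers $\mathbf{B} \in G\Omega_p([0,T], E)$ almost surely, completing the proof.
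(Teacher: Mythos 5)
The paper offers no proof of this lemma: it is quoted verbatim as a known result from the cited reference \cite{friz2005approximations}, so there is nothing internal to compare against. Your outline is the standard argument for that result (dyadic piecewise-linear approximations, uniform-in-$n$ Kolmogorov-type moment bounds at levels one and two with the symmetric part handled algebraically and the L\'evy-area part via It\^o isometry/BDG, then Garsia--Rodemich--Rumsey and the rough-path Kolmogorov criterion to upgrade $L^q$ rates to almost-sure $p$-variation convergence), and it is correct in substance; the only point deserving explicit care is that the quantitative level-two estimate with constant $C_{q,n}\to 0$ and full exponent $kq/2$ is usually obtained by interpolating a uniform bound with exponent $kq/2$ against a mesh-dependent bound, exactly as in the Friz--Victoir treatment you invoke.
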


We are interested in the random signature $S\left(B_{[0, \tau_{\Omega }]}\right)$ of the Brownian path up to the first exit time $\tau_\Omega$. It is shown  in \cite{lyons2015expected} that $\pi^{I}\left(S\left( B_{%
\left[ 0,\tau _{\Omega }\right] }\right)\right)$ has finite expectation with respect to the Wiener measure for every index $I$. Thus, the tensor-valued function $S\left( B_{%
\left[ 0,\tau _{\Omega }\right] }\right)$ is integrable.
\begin{definition}%[Expected signature of stopped Brownian motion]
%$\text{ }$Suppose that $S\left( B_{\left[ 0,\tau _{\Omega }\right] }\right)$ is integrable.
 We denote by $\Phi _{\Omega }(z)$ the expected signature of Brownian motion starting at $z$ and stopped upon the first exit time $\tau _{\Omega }$ from a domain $\Omega $. That is,
\begin{equation*}
\Phi _{\Omega }(z)=\mathbb{E}^{z}\left[S\left(B_{ \left[ 0,\tau_{\Omega  }\right] }\right)\right].
\end{equation*}
\end{definition}

%We shall drop the subscript ${}_\Omega$ whenever there is no confusion about the domain. The $n^{\text{th}}$ term of $\Phi$ is denoted by $\Phi_{n}=\pi_{n}(\Phi)$ for every $n \in \mathbb{N}$. 

One of the main results in \cite{lyons2015expected} states that $\Phi$ satisfies a system of nested PDEs:
\begin{theorem}
\label{PDE_theorem}\label{PDE_theorem_Phi}
Let $\Omega \subset \R^d$ be a bounded domain. Then $\Phi$ satisfies
\begin{equation}\label{ES_BM_PDE}
\Delta \Phi(z)=-\left(\sum_{i=1}^{d}e_{i}\otimes e_{i}\right)\otimes
\Phi (z)-2\sum_{i=1}^{d}\left( e_{i}\otimes \frac{\partial \Phi
}{\partial z^{i}}(z)\right) \quad\text{for each  }z\in \Omega ,
\end{equation}%
with the boundary condition that for every $z \in \Omega $,
\begin{equation}\label{bc, original}
\lim_{t \nearrow \tau_{\Omega }}\Phi(B_{t})=\mathbf{1}\qquad \text{a.s. in } P^{z}  ,
\end{equation}%
and the initial conditions
\begin{eqnarray}
\proj_{0}(\Phi_{\Omega}(z)) &=&1 \qquad\text{for each }z\in\overline{\Omega }, \label{ic 1} \\
\proj_{1}(\Phi_{\Omega}(z)) &=&0 \qquad\text{for each }z\in \overline{\Omega}\label{ic 2}.
\end{eqnarray}
\end{theorem}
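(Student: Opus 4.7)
The plan is to construct a natural tensor-valued martingale $M_t$ from the signature and $\Phi_\Omega$, then apply Itô's formula level-by-level to extract (\ref{ES_BM_PDE}), using the graded structure of the signature to avoid any circularity in the regularity argument.

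\textbf{Step 1 (Martingale identity and initial conditions).} For each $z \in \Omega$, the process
\[
M_t := S\bigl(B_{[0,\,t\wedge \tau_\Omega]}\bigr)\otimes \Phi_\Omega\bigl(B_{t\wedge\tau_\Omega}\bigr)
\]
is a continuous $T((E))$-valued martingale under $P^z$. This follows from Chen's identity $S(B_{[0,\tau_\Omega]}) = S(B_{[0,t]}) \otimes S(B_{[t,\tau_\Omega]})$ on $\{t<\tau_\Omega\}$ combined with the strong Markov property: the first factor is $\mathcal{F}_t$-measurable and the conditional expectation of the second is $\mathbb{E}^{B_t}[S(B_{[0,\tau_\Omega]})] = \Phi_\Omega(B_t)$. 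Condition (\ref{ic 1}) is immediate, while for (\ref{ic 2}) boundedness of $B_{t\wedge\tau_\Omega}$ and optional stopping yield $\mathbb{E}^z[B_{\tau_\Omega}]=z$, whence $\proj_1\Phi_\Omega(z)=0$. Regularity is bootstrapped along the tensor grading: assuming $\proj_{k}\Phi_\Omega\in C^{2,\alpha}(\Omega)$ for all $k<n$, the Poisson equation for $\proj_n\Phi_\Omega$ (derived in Step~2) has a $C^{0,\alpha}$ right-hand side depending only on lower levels, so interior Schauder theory promotes $\proj_n\Phi_\Omega$ to $C^{2,\alpha}$.

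\textbf{Step 2 (PDE via Itô's formula).} The signature satisfies, in Itô form,
\[
\dd S\bigl(B_{[0,t]}\bigr) = \sum_{i=1}^d S\bigl(B_{[0,t]}\bigr)\otimes e_i\,\dd B_t^i + \tfrac{1}{2}\sum_{i=1}^d S\bigl(B_{[0,t]}\bigr)\otimes e_i\otimes e_i\,\dd t.
\]
Combined with Itô's formula for $\Phi_\Omega(B_t)$ and the tensor-product rule (which contributes a cross-variation $\sum_i S(B_{[0,t]})\otimes e_i\otimes \partial_i \Phi_\Omega(B_t)\,\dd t$), the bounded-variation part of $\dd M_t$ equals
\[
S\bigl(B_{[0,t]}\bigr)\otimes\left[\tfrac{1}{2}\Delta\Phi_\Omega(B_t)+\tfrac{1}{2}\sum_{i=1}^d e_i\otimes e_i\otimes \Phi_\Omega(B_t)+\sum_{i=1}^d e_i\otimes \partial_i\Phi_\Omega(B_t)\right]\dd t.
\]
Because $M_t$ is a martingale and $S(B_{[0,t]})$ is invertible in $T((E))$ (its scalar part is $1$), the bracket must vanish; evaluating at $t=0$ and varying $z\in\Omega$ yields (\ref{ES_BM_PDE}). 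For the boundary condition (\ref{bc, original}), path-continuity gives $B_t\to B_{\tau_\Omega}\in\partial\Omega$ a.s., so it suffices that $\Phi_\Omega$ extends continuously to $\partial\Omega$ with boundary value $\mathbf{1}$; this is furnished by the levelwise bound $\|\proj_n(\Phi_\Omega(z)-\mathbf{1})\|\lesssim_n (\mathbb{E}^z[\tau_\Omega])^{n/2}$ (obtained from BDG inequalities applied to iterated Stratonovich integrals) together with $\mathbb{E}^z[\tau_\Omega]\to 0$ as $z\to\partial\Omega$ on a regular domain.

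\textbf{Main obstacle.} The principal subtlety is the apparent circularity: Itô's formula needs $\Phi_\Omega\in C^2$, yet (\ref{ES_BM_PDE}) is the very tool by which regularity is obtained. The graded structure of the signature resolves this -- at level $n$, the right-hand side of the Poisson equation involves only $\proj_{n-1}\Phi_\Omega$ and $\proj_{n-2}\Phi_\Omega$, so the hierarchy unfolds as a decoupled sequence of scalar Poisson problems, each solvable once the previous levels' regularity is known. Working projection-by-projection onto the finite-dimensional $T^{(n)}(E)$ also sidesteps any need for infinite-dimensional stochastic calculus.
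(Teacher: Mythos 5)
Your martingale-plus-It\^{o} strategy is the right circle of ideas: the paper itself does not prove this theorem (it is quoted from Lyons--Ni \cite{lyons2015expected}), but its proof of the converse, Theorem~\ref{thm: pde}, runs exactly this computation --- the process $S(B_{0,t\wedge\tau_\Omega})\otimes\xi(B_{t\wedge\tau_\Omega})$, the It\^{o}/Stratonovich correction $\tfrac12\sum_i e_i\otimes e_i$, the cross-variation term, and the resulting drift whose vanishing is equivalent to Eq.~\eqref{ES_BM_PDE}. Your algebra in Step~2 is correct. The genuine gap is that your resolution of the circularity does not actually break it. To apply It\^{o}'s formula to $\proj_n\Phi_\Omega(B_t)$ you must already know $\proj_n\Phi_\Omega\in C^2$; your bootstrap obtains that regularity from ``the Poisson equation for $\proj_n\Phi_\Omega$ derived in Step~2,'' but Step~2's derivation is the It\^{o} computation that presupposes the $C^2$ regularity at level $n$. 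Schauder theory promotes the regularity of something you already know to be a (weak or classical) solution; it cannot certify that $\proj_n\Phi_\Omega$ solves the equation in the first place. The standard repair --- and the direction in which the paper's Theorem~\ref{thm: pde} is actually set up --- is to reverse the roles: define $\xi_n$ as the unique $C^{2,\alpha}$ solution of the level-$n$ Poisson problem with right-hand side built from the already-identified $\proj_{n-1}\Phi_\Omega$, $\proj_{n-2}\Phi_\Omega$ and zero boundary data, apply It\^{o} to $\xi_n$ (now legitimate), deduce that $\sum_i \proj_i S(B_{0,t\wedge\tau})\otimes\xi_{n-i}(B_{t\wedge\tau})$ is a martingale, and identify $\xi_n=\proj_n\Phi_\Omega$ by optional stopping and the $L^1$-convergence $N_t\to\proj_n S(B_{0,\tau})$. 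Alternatively one derives the mean-value identity by conditioning on the exit from a small ball, as in \cite{lyons2015expected}. Either way, one extra identification step is needed that you have not supplied.

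A secondary issue: the theorem assumes only that $\Omega$ is a bounded domain, whereas your argument for the boundary condition~\eqref{bc, original} invokes $\mathbb{E}^z[\tau_\Omega]\to 0$ as $z\to\partial\Omega$, which requires boundary regularity (and the intermediate bound should involve $\mathbb{E}^z[\tau_\Omega^{n/2}]$ rather than $(\mathbb{E}^z[\tau_\Omega])^{n/2}$). The condition~\eqref{bc, original} as stated is an almost-sure limit along the path and follows without any regularity: $M_t=\mathbb{E}^z[S(B_{[0,\tau_\Omega]})\,|\,\mathcal{F}_{t\wedge\tau_\Omega}]\to S(B_{[0,\tau_\Omega]})$ a.s.\ by martingale convergence, $S(B_{[0,t\wedge\tau_\Omega]})\to S(B_{[0,\tau_\Omega]})$ a.s.\ by continuity in $t$, and since the scalar component is $1$ the group inverse is continuous, so $\Phi(B_{t\wedge\tau_\Omega})=S(B_{[0,t\wedge\tau_\Omega]})^{-1}\otimes M_t\to\mathbf{1}$.
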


%Further discussions on the PDE~\eqref{ES_BM_PDE} can be found in Proposition~\ref{prop: schauder} and Theorem~\ref{thm: pde} below.
\subsection{Hyperbolic development}
%Let us first define the radius of convergence of a tensor.

Let $\mathscr{A}$ be a normed algebra. Any linear operator $M: E\to\mathscr{A}$ can be extended to $T((E))$ \emph{by naturality}. Indeed, one first defines $\hat{M}$ on $E^{\otimes k}$ for each $k$ via
\begin{eqnarray*}
\hat{M}\left(e_{i_{1}} \otimes e_{i_{2}} \otimes \cdots \otimes e_{i_{k}}\right) := M(e_{i_{1}}) M(e_{i_{2}}) \cdots M(e_{i_{k}}), 
\end{eqnarray*}
and then extends to $T((E))$ by linearity. The right-hand side of the above identity is understood as a product in the algebra $\mathscr{A}$. By an abuse of notations, we always write $\hat{M} \equiv M$.

For any $\lambda >0$, the action of $\lambda M$ on the signature of a path $\gamma$ of bounded $1$-variation is given by $M(\lambda \gamma)$. Similarly, for $\lambda >0$, the action of $\lambda M$ on $\Phi_{\Omega}$ is given by
\begin{equation}\label{dev-new}
\mathcal{M}_{\lambda, \Omega}(z) := (\lambda M) \Phi_{\Omega}(z) \quad \text{ for each } z \in \Omega.
\end{equation}
This can be recast into
\begin{equation}\label{M}
\mathcal{M}_{\lambda, \Omega}(z) =  \sum_{n=0}^\infty \lambda^n M\left[ \proj_n(\Phi_\Omega(z))\right]
\end{equation}
at least for small enough $|\lambda|$. See Lyons--Ni \cite{ni2012expected}, Theorem 3.6.

The hyperbolic development (\emph{cf.} Boedihardjo--Diehl--Mezzarobba--Ni \cite{boedihardjo2021expected}; Hambly--Lyons \cite{hl}) refers to following particular choice of $M$, denoted as $H: T((E)) \to \mathfrak{gl}(d+1;\R)$ ---
\begin{eqnarray*}
&& H(c) =  c I_{d+1} \qquad \text{ for } c \in \R,\\
&&
H (x,y) := \begin{bmatrix}
0&0&\cdots &0 & z^1\\
0&0&\cdots &0 & z^2\\
\vdots & \vdots &\ddots &\vdots& \vdots \\
0&0&\cdots &0 & z^d\\
z^1&z^2&\cdots &z^{d}&0
\end{bmatrix}\qquad \text{ for } z = \left(z^1,\ldots,z^d\right) \in E,
\end{eqnarray*}
and $H$ is extended to $T((E))$ by naturality as above. 

Lyons--Xu \cite{lyons2017hyperbolic} characterises the space of the hyperbolic development of bounded 1-variation $\mathbb{R}^{d}$ path and provides an explicit expression for the hyperbolic development in terms of the signature. Specifically, for $\gamma: [0, 1] \rightarrow \mathbb{R}^{d}$ of bounded $1$-variation and $h_{\lambda}(\gamma):= H\left(S\left(\lambda\gamma\right)\right) (0, \cdots, 0, 1)^{\top}$, it is shown that $h_{\lambda}(\gamma)$ is confined within the $d$-dimensional hyperboloid
\begin{align*}
    \mathbb{H}_{d}:=\left\{ x\in\R^{d+1}:\, \sum_{j=1}^{d+1} \left(x^j\right)^2 - \left(x^{d+1}\right)^2=-1,\, x^{d+1}>0 \right\}.
\end{align*}
Denote the set of ``squared words'' by
\begin{eqnarray*}
\mathcal{E}_{2n}=\Big\{w = (i_{1}, i_{1}, i_{2}, i_{2}, \cdots, i_{n}, i_{n}) :\,i_1, i_2, \cdots, i_n \in \{1, 2, \cdots, d\}\Big\}
\end{eqnarray*}
and, for $k \in\{1, 2, \cdots, d\}$, put 
\begin{eqnarray*}
\mathcal{E}_{2n}^{(k)}=\Big\{w = (i_{1}, i_{1}, i_{2}, i_{2}, \cdots, i_{n}, i_{n}, k):\,i_1, i_2, \cdots, i_n \in \{1, 2, \cdots, d\}\Big\}.
\end{eqnarray*}

\begin{lemma}\label{lemma_hyperbolic_dev}
Let $\gamma: [0, T] \rightarrow \mathbb{R}^{d}$ be a path of bounded $1$-variation, and set $$h_{\lambda}(\gamma)\equiv \left(h_{\lambda}^{(1)}(\gamma), \cdots, h_{\lambda}^{(d+1)}(\gamma) \right)^{\top} := H\left(S\left(\lambda\gamma\right)\right).$$ We may express
\begin{eqnarray*}
&&h_{\lambda}^{(k)} (\lambda\gamma) = \sum_{n \geq 0} \sum_{ w \in \mathcal{E}_{2n}^{(k)}} \lambda^{2n+1} \proj_{w}(S(\gamma)),\\
&&h_{\lambda}^{(d+1)} (\lambda\gamma) = \sum_{n \geq 0} \sum_{ w \in \mathcal{E}_{2n}} \lambda^{2n}\proj_{w}(S(\gamma)).
\end{eqnarray*}
\end{lemma}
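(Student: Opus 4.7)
The plan is to compute $H(S(\lambda\gamma))\,e_{d+1}$ level by level, reducing the claim to an elementary recursion on products of the ``letter matrices'' $E_j := H(e_j)$.

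By the naturality of the extension of $H$, together with the scaling $\proj_w(S(\lambda\gamma)) = \lambda^{|w|}\proj_w(S(\gamma))$, one can expand
\begin{equation*}
    h_\lambda(\gamma) \;=\; \sum_{n \geq 0}\lambda^n \sum_{i_1,\ldots,i_n \in \{1,\ldots,d\}} \proj_{(i_1,\ldots,i_n)}(S(\gamma))\cdot E_{i_1}E_{i_2}\cdots E_{i_n}\, e_{d+1},
\end{equation*}
where $E_j$ is the symmetric $(d+1)\times(d+1)$ matrix whose only nonzero entries are $(E_j)_{j,d+1} = (E_j)_{d+1,j}=1$. A direct inspection yields the two identities that drive the entire argument:
\begin{equation*}
    E_j\, e_{d+1} = e_j, \qquad E_j\, e_k = \delta_{jk}\, e_{d+1} \quad \text{for } k \in \{1,\ldots,d\}.
\end{equation*}

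The key step is to evaluate the product $E_{i_1}\cdots E_{i_n}e_{d+1}$ by induction on $n$, reading the product from right to left. The two identities above force each successive multiplication to alternate between the ``horizontal'' subspace $\mathrm{span}(e_1,\ldots,e_d)$ and the ``vertical'' direction $e_{d+1}$, producing a Kronecker $\delta$ at every second step. For $n = 2m$ one obtains $\delta_{i_{2m-1}\,i_{2m}}\,\delta_{i_{2m-3}\,i_{2m-2}}\cdots\delta_{i_1 i_2}\, e_{d+1}$, so the only surviving words are precisely the squared words in $\mathcal{E}_{2m}$, and they contribute to the $(d+1)$-th coordinate. For $n = 2m+1$ the same recursion pairs up all but one letter and leaves a factor of $e_k$ for the unpaired letter; hence only words of ``squared shape augmented by one distinguished letter $k$'' contribute, and they enter the $k$-th coordinate of $h_\lambda(\gamma)$.

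Reorganising the double sum according to which coordinate each term lands in, and collecting the factors $\lambda^{2m}$ and $\lambda^{2m+1}$ arising from the word length, yields the two displayed formulas. I do not anticipate any genuine obstacle in executing this plan: once the two identities for $E_j$ are in hand, the induction is essentially mechanical, and convergence for small $|\lambda|$ follows from the factorial decay of signature coefficients of a bounded-variation path. The only real care required is bookkeeping — in particular, checking that the position of the unpaired letter $k$ within the squared word is consistent with the left-to-right convention chosen for the naturality extension of $H$ to $T((E))$; modulo this, the proof is pure linear algebra in $\mathfrak{gl}(d+1;\R)$.
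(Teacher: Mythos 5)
The paper does not actually prove this lemma; it is quoted as a consequence of the Lyons--Xu description of the hyperbolic development, so there is no in-paper argument to compare against. Your strategy --- expand $H(S(\lambda\gamma))e_{d+1}$ level by level using the two identities $E_j e_{d+1}=e_j$ and $E_j e_k=\delta_{jk}e_{d+1}$, and let the alternation between the horizontal subspace and $e_{d+1}$ generate the Kronecker deltas --- is the right and essentially only elementary proof, and the convergence remark (factorial decay of signature coefficients of a bounded $1$-variation path) is the correct justification for rearranging the series.

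However, the point you defer as ``bookkeeping'' is a genuine issue and you should not leave it unresolved, because it is not convention-free. Carrying out your own recursion on $E_{i_1}\cdots E_{i_{2n+1}}e_{d+1}$ from right to left pairs $(i_{2n},i_{2n+1}),(i_{2n-2},i_{2n-1}),\dots,(i_2,i_3)$ and leaves $i_1$ unpaired, so the surviving words have the distinguished letter $k$ at the \emph{front}, i.e.\ $w=(k,j_1,j_1,\dots,j_n,j_n)$, whereas the set $\mathcal{E}_{2n}^{(k)}$ as defined in the paper puts $k$ at the \emph{end}. These two index sets select genuinely different signature coefficients for a generic path: for instance, for the concatenation of a unit step in $e_1$ with a unit step in $e_2$ one has $\proj_{(1,2,2)}(S(\gamma))=\tfrac12$ but $\proj_{(2,2,1)}(S(\gamma))=0$, and a direct computation of $\exp(E_1)\exp(E_2)e_3$ confirms that the front-placement sum (equal to $\sinh 1\cosh 1$ in the first coordinate) is the correct one, while the end-placement sum gives $\sinh 1$. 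So under the column convention $h_\lambda(\gamma)=H(S(\lambda\gamma))(0,\dots,0,1)^\top$ used in the paper, your computation is right and the lemma as literally stated (with the paper's $\mathcal{E}_{2n}^{(k)}$) requires either replacing that set by its front-anchored analogue or reading $h_\lambda$ as the last \emph{row} of $H(S(\lambda\gamma))$ (transposing reverses the product and moves $k$ to the end). Your proof should state explicitly which of these it establishes; as written it proves the front-anchored identity, not the displayed one. The discrepancy is harmless for the paper's later applications (only the parity of letter counts in $w$ matters for the reflection arguments), but a complete proof of the statement must address it.
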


As in Chevyrev--Lyons \cite{cl}, the radius of convergence of the expected signature of a rough path contains crucial information about the law of the path.

\begin{definition}
The radius of convergence of a tensor $\varphi=(\varphi_n) \in T((E))$ is the radius of convergence of the series $\lambda\mapsto \sum_{n=0}^\infty \lambda^n \||\varphi_n\||$. Throughout this paper, $\||\bullet\||$ is taken to be the projective norm on $T((E))$. 
\end{definition}

\begin{comment}
\begin{remark}\label{remark: ROC}
The radius of convergence of $\varphi=(\varphi_n) \in T((E))$ also equals to that of $\lambda\mapsto \sum_{n=0}^\infty \lambda^n \||\varphi_n\||$ as a \emph{complex} series; that is, the smallest $\lambda^\star >0$ such that $\sum_{n=0}^\infty \lambda^n \||\varphi_n\||$ is holomorphic for every $\lambda \in \mathbf{B}(0,\lambda^\star) \subset \C$. 
This remark follows from the Cauchy--Hadamard theorem: the radius of convergence equals $\left\{ \limsup_{n\to\infty} \sqrt[n]{\||\varphi_n\||} \right\}^{-1}$.  
\end{remark}
\end{comment}

The following characterisation for the finiteness of radius of convergence of the expected signature can be found in \cite{cl}.

\begin{proposition}\label{prop/def}
$\varphi \in T((E))$ has infinite radius of convergence if and only if it lies in the closure of $T((E))$ with respect to the coarsest topology for which the following holds: for any normed algebra $\mathscr{A}$ and any morphism $H \in {\rm Hom}(E;\mathscr{A})$, the natural extension $\hat{H}: T((E)) \to \mathscr{A}$ of $H$ (not relabelled in the sequel) is continuous in this topology.
\end{proposition}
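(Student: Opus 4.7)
The plan is to establish both implications of the characterization, which I read as: $\varphi \in T\left(\left(E\right)\right)$ has infinite radius of convergence if and only if $\varphi$ belongs to the closure of the polynomial subalgebra $T(E) := \bigoplus_{n \geq 0} E^{\otimes n}$ inside $T\left(\left(E\right)\right)$, equipped with the coarsest topology rendering every natural extension $\hat H$ continuous (on the locus where it is defined).

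\textbf{Forward direction ($\Rightarrow$).} Assume $\varphi = (\varphi_n)$ has infinite radius of convergence, and fix any morphism $H: E \to \mathscr{A}$ into a normed algebra. Setting $C := \sup_{\|e\| \leq 1} \|H(e)\|_{\mathscr{A}}$, submultiplicativity of the projective norm together with that of the algebra norm yields $\|\hat H(\varphi_n)\|_{\mathscr{A}} \leq C^n \||\varphi_n\||$. Infinite radius of convergence gives $\sum_n C^n \||\varphi_n\|| < \infty$, so $\hat H(\varphi) := \sum_n \hat H(\varphi_n)$ converges absolutely in the completion of $\mathscr{A}$. The truncations $\varphi^{(N)} := \sum_{n \leq N} \varphi_n \in T(E)$ therefore satisfy $\hat H(\varphi^{(N)}) \to \hat H(\varphi)$; since this holds for \emph{every} such $H$, the truncations converge to $\varphi$ in the initial topology, placing $\varphi$ in the closure of $T(E)$.

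\textbf{Reverse direction ($\Leftarrow$).} Suppose $\varphi$ lies in this closure and fix an arbitrary $\lambda > 0$; the goal is $\sum_n \lambda^n \||\varphi_n\|| < \infty$. The crucial construction is a bespoke normed algebra tuned to scale $\lambda$: let $\mathscr{A}_\lambda$ be the Banach algebra obtained by completing $T(E)$ under the norm $\|y\|_\lambda := \sum_n \lambda^n \||y_n\||$. Submultiplicativity of the projective norm under tensor products makes $\mathscr{A}_\lambda$ a genuine normed algebra. Take $H_\lambda: E \hookrightarrow \mathscr{A}_\lambda$ to be the canonical inclusion; the natural extension $\hat H_\lambda$ then satisfies $\|\hat H_\lambda(y)\|_\lambda = \sum_n \lambda^n \||y_n\||$ on polynomial inputs. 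By hypothesis there is a net $\psi^{(\alpha)} \in T(E)$ with $\psi^{(\alpha)} \to \varphi$ in the initial topology, so $\{\hat H_\lambda(\psi^{(\alpha)})\}$ is Cauchy in $\mathscr{A}_\lambda$. Because projection onto each graded slice $E^{\otimes n}$ is continuous under $\|\cdot\|_\lambda$, the components $\psi^{(\alpha)}_n$ converge to $\varphi_n$ in the projective norm, and passing to the limit in $\|\hat H_\lambda(\psi^{(\alpha)})\|_\lambda$ yields $\sum_n \lambda^n \||\varphi_n\|| < \infty$. Since $\lambda > 0$ is arbitrary, $\varphi$ has infinite radius of convergence.

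\textbf{Main obstacle.} The principal technical subtlety concerns the formalism surrounding $\hat H$: generically $\hat H(\varphi) = \sum_n \hat H(\varphi_n)$ is only a partially defined map on $T\left(\left(E\right)\right)$, depending on sufficient decay of the components $\varphi_n$. One must make precise what ``continuity in the coarsest topology'' means for such partially defined maps; the cleanest path is to work inside the Banach completion of $\mathscr{A}$ and to restrict attention to those $H$ for which $\hat H$ extends continuously. A secondary point is to verify graded separation in $\mathscr{A}_\lambda$ so that $\|\cdot\|_\lambda$-convergence descends to componentwise projective-norm convergence, which is immediate from the direct-sum structure of $\|\cdot\|_\lambda$.
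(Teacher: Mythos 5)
The paper does not actually prove this proposition --- it is quoted verbatim from Chevyrev--Lyons \cite{cl} (Proposition~6.1) and used as a black box --- so there is no in-paper argument to compare against. Your proof is, in substance, the standard one from that reference, and it is correct in both of its essential mechanisms: the forward direction rests on the cross-norm property of the projective norm, $\bigl\|\hat H(\varphi_n)\bigr\|_{\mathscr{A}} \leq \|H\|^n \||\varphi_n\||$, so that infinite radius of convergence forces absolute convergence of $\sum_n \hat H(\varphi_n)$ and hence convergence of the truncations in the initial topology; the reverse direction rests on the bespoke algebra $\mathscr{A}_\lambda$, whose submultiplicativity you verify correctly via the Cauchy-product estimate for the weighted $\ell^1$ projective norm. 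You also correctly repair the statement's typo: the closure must be of the polynomial subalgebra $T(E)$, not of $T((E))$ itself.

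The one place where your write-up leans on the imprecision of the statement rather than resolving it is the reverse direction. You assert that $\psi^{(\alpha)} \to \varphi$ forces $\{\hat H_\lambda(\psi^{(\alpha)})\}$ to be Cauchy in $\mathscr{A}_\lambda$; but a net converging in the initial topology only has Cauchy image under $\hat H_\lambda$ if $\hat H_\lambda(\varphi)$ is already a well-defined element of $\mathscr{A}_\lambda$ --- which is essentially what you are trying to prove. You flag this partial-definedness issue as the "main obstacle" but do not close it. The clean repair is available from ingredients you already have: the truncation morphisms $\hat H: T((E)) \to T^{(N)}(E)$ (induced by the canonical inclusion $E \hookrightarrow T^{(N)}(E)$) are \emph{globally} defined members of the family, so convergence in the initial topology implies $\psi^{(\alpha)}_n \to \varphi_n$ in each graded slice; then boundedness of $\|\hat H_\lambda(\psi^{(\alpha)})\|_\lambda$ along the net together with Fatou's lemma (lower semicontinuity of $y \mapsto \sum_n \lambda^n \||y_n\||$ under componentwise convergence) yields $\sum_n \lambda^n \||\varphi_n\|| < \infty$ without ever presupposing that $\hat H_\lambda(\varphi)$ exists. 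With that substitution your argument is complete and matches the cited proof.
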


%Here and hereafter, $\hat{H}$ is said to be the natural extension of $H$ to $T((E))$ if and only if $$\hat{H}(v_1 \otimes \cdots \otimes v_n) = H(v_1)\cdots H(v_n)$$ for any $n = 1,2,3,\ldots$ and any $v_1, \ldots, v_n \in E$. We adopt the abuse of notation that $\hat{H}\equiv H$.

\subsection{Further notations} A domain $\Omega \subset \R^d$ is a connected open set. The symbols $z^\dagger$, ${\rm Re}(z)$, and ${\rm Im}(z)$ denote respectively complex conjugate, real part, and imaginary part of $z \in \C$, $A^\top$ is the transpose of matrix $A$, and ${\bf id}$ is the identity map (the domain being clear from the context). We write $U \Subset V$ if the closure of $U$ is in $V$. Our notations in \S\ref{sec: proof of main thm} below are largely identical to  those in  \cite{boedihardjo2021expected}; \emph{e.g.}, the usage of symbols $\alpha$, $\zeta$, $M$, etc. For essentially analogous but slightly different symbols we shall distinguish by an overhead bar; for instance, we write $\bar{A}_\lambda$, $\bar{B}_\lambda$, and  $\bar{C}_\lambda \equiv \overline{\mathcal{F}_{\lambda,\e}}$ in contrast to $A_\lambda$, $B_\lambda$, and $C_\lambda={F}_{\lambda,\Omega}$ in \cite{boedihardjo2021expected}, respectively. Finally, $\mathfrak{gl}(D;\R) = \{ D \times D  \text{ real matrices}\}$ is equipped with the Hilbert--Schmidt norm as a normed algebra.

A bounded domain $\Omega \Subset \R^d$ is said to be of regularity $X$ ($=C^k$,  $C^{2,\alpha}$, etc.) if its boundary are locally graphs of functions of regularity $X$. More precisely, there exist open sets $\mathcal{O}_1,\ldots,\mathcal{O}_N \subset \R^d$ such that for each $j\in\{1,2,\ldots,N\}$, it holds that
\begin{itemize}
\item
$\p\Omega \subset \bigcup_{j=1}^N \mathcal{O}_j$;
    \item 
$\mathcal{O}_j \cap \p\Omega \neq \emptyset$;
    \item
there are an open set $V_j\subset \R^{d-1}$ and a function $\psi_j:V_j\to\R$ of regularity $X$ such that $\mathcal{O}_j \cap \p\Omega = \left\{(y,\psi_j(y)):\,y \in V_j \right\}$ and $\mathcal{O}_j \cap \Omega = \left\{ (y,z)\in V_j\times \R:\,z<\psi_j(y)\right\}$.
\end{itemize}

%To show the finiteness of radius of convergence of stopped Brownian motion up to the unit disk, we consider the development of the expected signature of the Browian motion as described below. 

%Let $F_{\lambda, \Omega}$ denote $M_{\lambda, \Omega}(z) \begin{pmatrix}0\\ 0\\ 1 \end{pmatrix}$. In \cite{boedihardjo2021expected}, we have the following lemma to characterize $F_{\lambda, \Omega}$ via the PDE system when $\Omega$ is a unit disk. 
\begin{comment}
\begin{lemma}
There exists $\lambda^{*} >0$ such that $\forall \lambda \leq \lambda^{*}$, the function $F_{\lambda}$ is twice continuously differentiable on $\mathbb{D}$, and satisfies
\begin{equation}
\Delta F_{\lambda, \Omega}(z)=-2\lambda\sum_{i=1}^{2}Me_{i}\frac{\partial F_{\lambda, \Omega}}{\partial z_{i}}(z)-\lambda^{2}\left(\sum_{i=1}^{2}\left(Me_{i}\right)^{2}\right)F_{\lambda, \Omega}(z) \label{eq:Lemma2PDE} 
\end{equation}
with $F_\lambda(z)=(0,0,1)$ for $z\in\partial\mathbb{D}$. Here $(e_1,e_2)$ denotes the canonical basis of $\mathbb{R}^2$. 
\end{lemma}
\end{comment}

\section{PDE for the expected signature}\label{sec: lemmata}

Here we investigate the PDE~\eqref{ES_BM_PDE} for the expected signature. Our results in this section are valid for $E=\R^d$ for any $d\in \mathbb{Z}_{\geq 2}$. Also note that our regularity assumption on the domain is weaker than that in Lyons--Ni \cite{lyons2015expected}. See Remark~\ref{rem: elliptic} below.

%When the domain $\Omega \subset E$ is regular, we can reinterpret the boundary condition~\eqref{bc, original} as a standard Dirichlet condition, hence the standard elliptic regularity theory for PDEs can be applied to show the existence of classical solutions that are regular up to the boundary:
\begin{proposition}\label{prop: schauder}
Let $\Omega \subset E$ be a bounded $C^{2,\alpha}$-domain; $\alpha > 0$. Then for each $n = 2,3,4,\ldots$, the $n^{\text{th}}$ term of $\Phi_{\Omega}$ satisfies the PDE
\begin{align}
  \Delta\left(\proj_{n}(\Phi_{\Omega}(z))\right)
=  -2\sum_{i=1}^{d}e_{i}\otimes\frac{\partial\proj_{n-1}(\Phi_{\Omega }(z))}{\partial z^{i}} -\left(\sum_{i=1}^{d}e_{i}\otimes e_{i}\right)\otimes\proj_{n-2}(\Phi_{\Omega }(z))\label{eq:LyonsNiPDE}
\end{align}
with the initial conditions~\eqref{ic 1}, \eqref{ic 2} and the boundary condition
\begin{equation}\label{bc, Holder domain}
    \Phi_{\Omega} = {\bf 1} \qquad\text{ on } \p \Omega. 
\end{equation}
The above PDE has a unique classical solution in the sense that
\begin{equation*}
    \proj_{n}(\Phi_{\Omega}) \in C^\infty\left(\Omega; E^{\otimes n}\right) \cap C^{2,\alpha}\left(\overline{\Omega}; E^{\otimes n}\right) \qquad \text{ for each } n=0,1,2,\ldots.
\end{equation*}
In addition, the radius of convergence of $\Phi$ is strictly positive. 
\end{proposition}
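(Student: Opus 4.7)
My plan is to proceed by induction on the tensor degree $n$, exploiting the triangular/nested structure of the PDE system together with classical Schauder theory on $C^{2,\alpha}$-domains.

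First, I would derive the identities \eqref{eq:LyonsNiPDE} directly from \eqref{ES_BM_PDE} by projecting both sides onto $E^{\otimes n}$. Since $\sum_i e_i \otimes e_i \in E^{\otimes 2}$ and $e_i \in E$, the two driving terms on the right of \eqref{ES_BM_PDE} respectively lower the tensor degree by $2$ and by $1$. Therefore $\proj_n(\Phi_\Omega)$ solves an inhomogeneous Poisson equation whose source depends only on $\proj_{n-1}(\Phi_\Omega)$ and $\proj_{n-2}(\Phi_\Omega)$. For the boundary condition, the probabilistic statement \eqref{bc, original} combined with the continuity at $\p\Omega$ (obtained \emph{a posteriori} from the Schauder regularity below) yields $\Phi_\Omega={\bf 1}$ on $\p\Omega$, which at degree $n\geq 1$ reads $\proj_n(\Phi_\Omega)=0$ on $\p\Omega$.

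Next, I would run the induction. The base cases $n=0,1$ are handled by \eqref{ic 1}--\eqref{ic 2}. For the inductive step, assume $\proj_{n-1}(\Phi_\Omega)\in C^{2,\alpha}(\overline{\Omega})$ and $\proj_{n-2}(\Phi_\Omega)\in C^{0,\alpha}(\overline{\Omega})$; then the right-hand side of \eqref{eq:LyonsNiPDE} is of class $C^{0,\alpha}(\overline{\Omega})$. On a bounded $C^{2,\alpha}$-domain, the Dirichlet problem for the Laplacian with $C^{0,\alpha}$-data and vanishing (hence $C^{2,\alpha}$) boundary trace admits a unique classical solution in $C^{2,\alpha}(\overline{\Omega})$ by the global Schauder estimates (see \emph{e.g.} Gilbarg--Trudinger). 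To identify this solution with $\proj_n(\Phi_\Omega)$ I would use a standard Dynkin/Itô argument: the unique bounded classical solution to $\Delta u = f$ with $u=0$ on $\p\Omega$ admits the stochastic representation $u(z)=\tfrac12 \mathbb{E}^z\int_0^{\tau_\Omega}f(B_s)\,\dd s$, and a direct computation via the iterated Stratonovich integrals shows this agrees with $\proj_n\mathbb{E}^z[S(B_{[0,\tau_\Omega]})]$. Interior $C^\infty$ smoothness then follows by bootstrapping: once $\proj_{k}(\Phi_\Omega)\in C^{m,\alpha}_{\rm loc}(\Omega)$ for all $k<n$, the source lies in $C^{m-1,\alpha}_{\rm loc}(\Omega)$ and interior Schauder lifts $\proj_n$ to $C^{m+1,\alpha}_{\rm loc}(\Omega)$, repeating indefinitely.

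Finally, for strict positivity of the radius of convergence, I would track the $C^{2,\alpha}(\overline{\Omega})$-norms along the induction. Global Schauder gives
\begin{equation*}
\|\proj_n(\Phi_\Omega)\|_{C^{2,\alpha}(\overline{\Omega})} \leq C_\Omega \left( \|\proj_{n-1}(\Phi_\Omega)\|_{C^{1,\alpha}(\overline{\Omega})} + \|\proj_{n-2}(\Phi_\Omega)\|_{C^{0,\alpha}(\overline{\Omega})} \right),
\end{equation*}
for a constant $C_\Omega$ depending only on $\Omega$ and $d$. A simple recursion then yields $\|\proj_n(\Phi_\Omega)\|_{C^{2,\alpha}}\leq K^{n}$ for some $K=K(\Omega,d)>0$, whence the series $\sum_n \lambda^n\||\proj_n(\Phi_\Omega)\||$ converges for $|\lambda|<1/K$, proving strictly positive radius of convergence pointwise in $\Omega$.

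I expect the main technical obstacle to be the clean identification of the unique classical Schauder solution with the probabilistic object $\proj_n(\Phi_\Omega)$; this requires verifying that the Brownian expectation of the iterated Stratonovich integrals indeed produces the Poisson-type source with the stated coefficients, and that the probabilistic boundary condition \eqref{bc, original} passes to the classical Dirichlet trace $\eqref{bc, Holder domain}$ — the latter needs just enough \emph{a priori} boundary continuity to start the induction, which is a consequence of the standard regularity of harmonic measure on $C^{2,\alpha}$-domains.
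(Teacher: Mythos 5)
Your proposal is correct and follows essentially the same route as the paper: project the nested PDE of Theorem~\ref{PDE_theorem} degree by degree, run an induction with global Schauder estimates on the $C^{2,\alpha}$-domain (the $E^{\otimes n}$-valued Poisson equation decouples into scalar ones, so Schauder applies), bootstrap interior smoothness, and extract the geometric bound $\|\proj_n(\Phi_\Omega)\|\leq K^n$ from the recursion to get positive radius of convergence. The only addition is your explicit Dynkin-type identification of the Schauder solution with the probabilistic object, which the paper defers to the separate converse statement (Theorem~\ref{thm: pde}) rather than folding into this proof.
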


%Recall that $\Omega$ is of the class $C^{2,\alpha}$ if and only if for each $x\in \overline{\Omega}$, there is a neighbourhood $U \subset  \overline{\Omega}$ containing $x$ that is $C^{2,\alpha}$-diffeomorphic to $\R^d$ or $\overline{\mathbb{H}^d}=\{x=(x',x^d) \in \R^{d-1}\times \R: x^d \geq 0\}$. 

\begin{proof}%[Proof of Proposition~\ref{prop: schauder}]

The proof is standard and shall only be sketched here. See  \cite{lyons2015expected}, \S 3 for details.

The boundary condition~\eqref{bc, Holder domain} follows from Eq.~\eqref{bc, original} and a continuity argument. 

Given the initial conditions~\eqref{ic 1} and \eqref{ic 2}, one may apply the standard Schauder theory (\cite{pde, gt}) inductively to show that  Eq.~\eqref{eq:LyonsNiPDE} has a  unique classical solution
\begin{equation*}
    \proj_{n}(\Phi_{\Omega}) \in C^\infty\left(\Omega; E^{\otimes n}\right) \cap C^{2,\alpha}\left(\overline{\Omega}; E^{\otimes n}\right).
\end{equation*}
As a remark, although the Schauder theory is not applicable for systems of elliptic PDEs in general, we can decouple the PDE~\eqref{eq:LyonsNiPDE} into scalar equations, thus there is no danger of utilising the Schauder theory here. More precisely, let $\{e_1,\ldots,e_d\}$ be the canonical basis for $E$; we can find scalarfields $f_{i_1\cdots i_n}:\Omega\to\R$ such that $$\Phi_n(z) = \sum_{i_1, \ldots i_n \in \{1,2,\ldots, d\}}\, f_{i_1\cdots i_n}(z) \, e_{i_1} \otimes \ldots \otimes e_{i_n}.$$ Thus Eq.~\eqref{eq:LyonsNiPDE} is equivalent to finitely many scalar Poisson equations of $f_{i_1\cdots i_n}$ for each $n$.

In addition, Schauder estimates together with a simple induction yield the geometric bound:
\begin{equation}
\label{geom bd}
    \|\proj_{n}(\Phi_{\Omega})\|_{C^0(\Omega)} \leq C^n,
\end{equation}
with $C$ depending only on  $\Omega$ and $d$, which implies that the radius of convergence of $\Phi$ is strictly positive. See  \cite{lyons2015expected}, Theorem~3.6 or the appendix in  \cite{boedihardjo2021expected}.    \end{proof}

\begin{remark}\label{rem: elliptic}
In \cite{lyons2015expected}, \S 3, a variant of Proposition~\ref{prop: schauder} is established for  strong solutions in Sobolev spaces $W^{m,2}(\Omega)$ for each $m \in \mathbb{N}$. Then the geometric bound~\eqref{geom bd} can be deduced from the  Sobolev--Morrey embedding, provided that $m > \lfloor n\slash 2 \rfloor +1$. But this requires, as in \cite{lyons2015expected},  that the domain $\Omega$ is $C^m$. Proposition~\ref{prop: schauder} indicates that such regularity assumption can be relaxed to $C^{2,\alpha}$.

In fact, thanks to the Kellogg theorem \cite{kellogg} one may further relax the regularity of $\Omega$ to $C^{2, {\rm Dini}}$, {\it i.e.}, $\Omega$ admits $C^2$-local charts with parametrisation maps being merely Dini continuous. In this case we have the classical solution in $C^\infty\left(\Omega; E^{\otimes n}\right) \cap C^{2}\left(\overline{\Omega}; E^{\otimes n}\right)$, while the geometric bound~\eqref{geom bd} remains valid.

\end{remark}

Recall the multi-index notation:
\begin{eqnarray*}
\left|I\right|=i_1+\ldots+i_d,
\qquad
D^Iu=\frac{\partial^{i_{1}+\ldots i_d}u}{\partial z_{1}^{i_{1}}\partial z_{2}^{i_{2}}\cdots\partial z_{d}^{i_{d}}},
\qquad
I=\left(i_{1},\ldots,i_d\right)\in\mathbb{N}^{d}.
\end{eqnarray*}
As an immediate consequence of Proposition~\ref{prop: schauder} and Remark~\ref{rem: elliptic}, we have
\begin{corollary}\label{cor: differentiability}
Let $\Omega \subset E$ be a bounded domain of class $C^{2,\alpha}$ (or even  $C^{2,{\rm Dini}}$). Let $M \in {\rm Hom}\big(\Omega;T((E))\big)$ be an algebra homomorphism with the projective norm on $T((E))$. Then there exists a constant $L>0$ depending only on $\Omega$ and the operator norm of $M$ such that, for all $\lambda \in [0,L[$, the series $z\mapsto\sum_{n=0}^{\infty}\lambda^{n}M\left[\proj_{n}\left(\Phi_{\Omega}(z)\right)\right]$ defines a $C^2$-function on $\Omega$. In addition, for any multi-index $I$ with  $|I|\leq 2$, it holds that
\begin{equation*}
  D^{I}\left(\sum_{n=0}^{\infty}\lambda^{n}M\left[\proj_{n}\left(\Phi_{\Omega}(z)\right)\right]\right)
=  \sum_{n=0}^{\infty}\lambda^{n}D^{I}\Big\{M\left[\proj_{n}\left(\Phi_{\Omega}(z)\right)\right]\Big\}.
\end{equation*}
\end{corollary}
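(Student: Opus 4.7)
The strategy I would pursue is to combine a geometric $C^2$ bound on the terms $\proj_n(\Phi_\Omega)$ with the natural geometric bound $\|\hat{M}|_{E^{\otimes n}}\|_{\rm op}\leq \|M\|_{\rm op}^n$ on the extension of $M$, and then invoke the classical theorem on termwise differentiation of a uniformly convergent series. First, I would upgrade the geometric $C^0$ bound~\eqref{geom bd} to a geometric $C^{2,\alpha}$ estimate on $\overline{\Omega}$. Setting $u_n:=\proj_n(\Phi_\Omega)$ and
\begin{equation*}
f_n := -2\sum_{i=1}^d e_i \otimes \frac{\partial u_{n-1}}{\partial z^i} - \Bigl(\sum_{i=1}^d e_i\otimes e_i\Bigr)\otimes u_{n-2},
\end{equation*}
one has $\Delta u_n = f_n$ in $\Omega$ with $u_n|_{\p\Omega}=\mathbf{0}$ for $n\geq 2$, by~\eqref{bc, Holder domain} and the initial conditions~\eqref{ic 1},~\eqref{ic 2}. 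Global Schauder estimates (applicable since $\Omega\in C^{2,\alpha}$) yield
\begin{equation*}
\|u_n\|_{C^{2,\alpha}(\overline{\Omega})} \leq K\Bigl(\|u_n\|_{C^0(\Omega)} + \|f_n\|_{C^{0,\alpha}(\overline{\Omega})}\Bigr),
\end{equation*}
with $K$ depending only on $\Omega,d,\alpha$. Feeding~\eqref{geom bd} into the inductive hypothesis $\|u_{n-1}\|_{C^{2,\alpha}(\overline{\Omega})}\leq A^{n-1}$ and $\|u_{n-2}\|_{C^{2,\alpha}(\overline{\Omega})}\leq A^{n-2}$, and choosing $A$ large enough relative to $K$ and to the constant in~\eqref{geom bd}, propagates the bound $\|u_n\|_{C^{2,\alpha}(\overline{\Omega})}\leq A^n$; in particular, $\|D^I u_n\|_{C^0(\Omega)} \leq A^n$ for every multi-index $|I|\leq 2$.

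Second, since $\hat{M}$ extends $M$ by naturality, its restriction to $E^{\otimes n}$ is linear and, with respect to the projective norm on $E^{\otimes n}$, satisfies $\|\hat{M}|_{E^{\otimes n}}\|_{\rm op}\leq \|M\|_{\rm op}^n$. Linearity then gives, for every fixed $n$ and every $|I|\leq 2$,
\begin{equation*}
D^I\bigl(\hat{M}[u_n(z)]\bigr)=\hat{M}\bigl[D^I u_n(z)\bigr],\qquad \bigl\|\lambda^n D^I \hat{M}[u_n(z)]\bigr\|\leq \bigl(\lambda A\|M\|_{\rm op}\bigr)^n,
\end{equation*}
uniformly in $z\in\Omega$. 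Taking $L:= \bigl(A\|M\|_{\rm op}\bigr)^{-1}$ and restricting to $\lambda\in[0,L[$, the formal termwise derivative series converges absolutely and uniformly on $\overline{\Omega}$. The classical theorem on differentiation of uniformly convergent series of $C^2$-functions then simultaneously delivers the $C^2$-regularity of the sum on $\Omega$ and the interchange of $D^I$ with $\sum_{n=0}^\infty$ asserted in the corollary.

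The main technical point is the geometric $C^{2,\alpha}$ control on $u_n$: one must keep careful track of how the Schauder constants propagate through the induction so that the growth remains purely exponential in $n$. This bookkeeping is essentially what is already carried out in \cite{lyons2015expected}, Theorem~3.6, and in the appendix of \cite{boedihardjo2021expected}, so little new work is required there. Under the weaker $C^{2,\mathrm{Dini}}$ hypothesis noted in Remark~\ref{rem: elliptic}, the analogous estimate via the Kellogg theorem yields only a geometric $C^2$ bound on $\overline{\Omega}$ (without the Hölder exponent $\alpha$), but this still suffices to close the argument above.
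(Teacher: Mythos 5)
Your proposal is correct and follows essentially the route the paper intends: the paper states this corollary as an immediate consequence of Proposition~\ref{prop: schauder} (whose proof invokes Schauder estimates plus induction for the geometric bound) and of the $C^2$-geometric bound $\|\proj_n\Phi\|_{C^2(\Omega)}\leq (C_0)^n$ made explicit in Step~1 of the proof of Theorem~\ref{thm: main}, combined with $\|\hat{M}|_{E^{\otimes n}}\|_{\rm op}\leq\|M\|_{\rm op}^n$ and termwise differentiation of the resulting geometrically dominated series. Your write-up simply makes the inductive propagation of the Schauder constants explicit, which is exactly the bookkeeping the paper defers to \cite{lyons2015expected} and \cite{boedihardjo2021expected}.
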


%Next, we express the PDE for the expected signature $\Phi$ on $\B^d=\{x\in\R^d:\,|x|=1\}$ in the spherical polar co-ordinates. This is crucial for \S\ref{sec: d dim}.
%\begin{lemma}\label{lem: PDE in polar}Denote by $(r,\omega)\in [0,1] \times \mathbf{S}^{d-1}$ the spherical polar co-ordinates on $\B^d$. The expected signature $\Phi: \B^d \to T((E))$ satisfies the following PDE: \begin{align}\label{PDE for Phi, d dim, shperical coord} &r^{1-d}\p_r(r^{d-1}\p_r \Phi_n) + r^{-2}\Delta_{\rm tan} \Phi_n \nonumber\\ &\qquad= -2e_r \otimes \p_r \Phi_{n-1} - e_r \otimes e_r\otimes\Phi_{n-2} + \mathcal{L}_1(\Phi_{n-1}) + \mathcal{L}_2(\Phi_{n-2}), \end{align} where $\mathcal{L}_1(\Phi_{n-1})$ and $ \mathcal{L}_2(\Phi_{n-2})$ are terms in $E^{\otimes n}$ involving nontrivial angular components $e_{\omega^\beta}$ for $\beta \in \{1,2,\ldots, d-1\}$.\end{lemma}

Theorem \ref{PDE_theorem} shows that for a bounded domain, the expected signature of the stopped Brownian motion satisfies the characteristic PDE~\eqref{thm: pde}. Conversely, the classical solution ($C^2$ in the interior and $C^1$ up to the boundary) for the characteristic PDE must coincide with the expected signature of the stopped Brownian motion. %The existence and uniqueness of the classical solution in the above sense is known for, {\it e.g.}, bounded  $C^{3,\alpha}$-domains; see Theorem~6.19 in Gilbarg--Trudinger \cite{gt}.

%In general, the following is well-known from the elliptic PDE theory ({\it cf.} \cite{pde, gt}): if the domain has no regularity assumption more than twice-differentiability, then no regularity property \emph{up to the boundary} can be expected for the solutions. Nevertheless, in the case that $\Omega$ is only a bounded domain, the solutions are still regular in the interior of $\Omega$. We shall establish the following result, and our proof is based on a martingale approach.

\begin{theorem}\label{thm: pde}
Let $\Omega$ be a bounded domain in $E$. Suppose that $\xi: \overline{\Omega} \rightarrow T((E))$ is a classical solution to the following PDE, in the regularity class $C^2\big(\Omega; T((E))\big)\cap C^1\big(\overline{\Omega};T((E))\big)$:
\begin{equation*}
    \begin{cases}
    \Delta \proj_{n}(\xi) = -2 \sum_{i=1}^d e_i \otimes \frac{\p  \proj_{n-1}(\xi)}{\p z^i}- \sum_{i=1}^d e_i \otimes e_i \otimes \proj_{n-2}(\xi)\qquad \text{in } \Omega;\\
   \xi = {\bf 1}  \qquad \text{ on } \p\Omega;\\
    \proj_{0}(\xi) \equiv 1  \text{ and } \proj_{1}(\xi) \equiv 0\qquad \text{in } \Omega.
    \end{cases}
\end{equation*}
Then $\xi(z)$ is the expected signature of Brownian motion starting at $z$ up to the first exit time $\tau_{\Omega}$; \emph{i.e.}, $\xi = \Phi_{\Omega}$ on $\overline{\Omega}$.
\end{theorem}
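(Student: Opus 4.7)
The plan is to use a stochastic verification (Feynman--Kac) argument to identify $\xi$ with $\Phi_\Omega$ directly, bypassing any need to subtract classical solutions of the PDE. Introduce the $T((E))$-valued process
\[
Y_t := S(B_{[0,t]}) \otimes \xi(B_t), \qquad t \in [0, \tau_\Omega],
\]
where $B$ is a Brownian motion on $E$ started at $z \in \Omega$ under $P^{z}$. I aim to show that at each tensor level $n$, the stopped process $\proj_n(Y_{t \wedge \tau_\Omega})$ is a martingale. Then the optional stopping identity $\E^{z}[\proj_n(Y_{\tau_\Omega})] = \proj_n(Y_0)$, combined with $Y_0 = \mathbf{1} \otimes \xi(z) = \xi(z)$ and the boundary data $\xi|_{\p\Omega} = \mathbf{1}$ (which forces $Y_{\tau_\Omega} = S(B_{[0,\tau_\Omega]}) \otimes \mathbf{1} = S(B_{[0,\tau_\Omega]})$), will yield $\proj_n(\xi(z)) = \E^{z}[\proj_n(S(B_{[0,\tau_\Omega]}))] = \proj_n(\Phi_\Omega(z))$ for every $n$.

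The core step is the It\^o computation that establishes the local martingale property. Recall that in Stratonovich form $\dd S_t = S_t \otimes \sum_i e_i \circ \dd B_t^i$; converting to It\^o, at each level $k$ one has
\[
\dd S_t^{(k)} = \sum_{i=1}^{d} S_t^{(k-1)} \otimes e_i \, \dd B_t^i + \frac{1}{2} \sum_{i=1}^{d} S_t^{(k-2)} \otimes e_i \otimes e_i \, \dd t.
\]
Applying It\^o's formula to each scalar component of $\proj_m(\xi(B_t))$, licensed by $\xi \in C^2(\Omega;T((E)))$, and then the It\^o product rule to $S_t^{(k)} \otimes \proj_{n-k}(\xi(B_t))$, the drift of $\proj_n(Y_t) = \sum_{k=0}^{n} S_t^{(k)} \otimes \proj_{n-k}(\xi(B_t))$ decomposes into three groups: an It\^o correction of $S$, the Laplacian contribution $\tfrac{1}{2} S_t^{(k)} \otimes \Delta \proj_{n-k}(\xi)$, and a cross-variation $\sum_{i} S_t^{(k-1)} \otimes e_i \otimes \p \proj_{n-k}(\xi)/\p z^i$. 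Substituting the assumed PDE
\[
\Delta \proj_{n-k}(\xi) = -2\sum_{i=1}^d e_i \otimes \frac{\p \proj_{n-k-1}(\xi)}{\p z^i} - \sum_{i=1}^d e_i \otimes e_i \otimes \proj_{n-k-2}(\xi)
\]
and performing the index shifts $k \mapsto k+1$ and $k \mapsto k+2$ in the summation (with the convention that $S_t^{(-1)}$, $S_t^{(-2)}$, $\proj_{-1}(\xi)$, $\proj_{-2}(\xi)$ vanish), the three groups cancel pairwise. Hence the drift of $\proj_n(Y_t)$ vanishes at every level $n$, and $\proj_n(Y_{\cdot\wedge \tau_\Omega})$ is a local martingale.

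To upgrade to a true martingale and justify the optional stopping at $\tau_\Omega$, I will localize by $\tau_\Omega \wedge T$ for $T \in [0,\infty[$ and let $T \to \infty$. The function $\xi$ is bounded on $\overline{\Omega}$, since it is $C^1$ on the compact set $\overline{\Omega}$; and by the integrability result of Lyons--Ni recalled in \S\ref{sec: expected signature}, each coordinate $\proj^I(S(B_{[0,\tau_\Omega]}))$ has finite expectation. Combined with the fact that $\tau_\Omega$ has exponential moments as the first exit time from a bounded domain, and with standard Burkholder--Davis--Gundy / Doob estimates applied scalar-coordinate by scalar-coordinate, one obtains uniform integrability of each coordinate of $\proj_n(Y_{t \wedge \tau_\Omega})$ as $t\to\infty$ at every tensor level, legitimizing the passage to the limit in the optional stopping identity.

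The hard part is not the algebra --- the drift cancellation via index shifts is in fact the cleanest ingredient, requiring only that the PDE be substituted --- but the careful bookkeeping of the stopping and integrability for the \emph{tensor-valued} process. Because $S(B_{[0,t]})^{(k)}$ itself is unbounded in $t$, one must simultaneously exploit the explicit integrability of $\proj^I(S(B_{[0,\tau_\Omega]}))$ from \cite{lyons2015expected}, the boundedness of $\xi$ on $\overline{\Omega}$, and the moment control on $\tau_\Omega$, and do so coordinate by coordinate; once this is in place, the optional stopping identity combined with the boundary value $\xi=\mathbf{1}$ on $\p\Omega$ gives $\xi = \Phi_\Omega$ throughout $\overline{\Omega}$.
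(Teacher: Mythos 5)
Your proposal takes essentially the same route as the paper's proof: you define the same process $N_t = S(B_{[0,t]})\otimes\xi(B_t)$ (which you call $Y_t$), show its drift vanishes by substituting the PDE into the Itô expansion term by term, upgrade the local martingale to a true martingale using boundedness of $\xi$ on $\overline\Omega$ together with the integrability of $S(B_{[0,\tau_\Omega]})$ from \cite{lyons2015expected}, and conclude via optional stopping and the boundary data $\xi|_{\partial\Omega}=\mathbf{1}$. The paper verifies the integrability more directly via an explicit $L^2$ bound on the stochastic-integral terms rather than through exponential moments of $\tau_\Omega$ and BDG estimates, but the substance of the argument is the same.
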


Let us outline the main ideas. Following the martingale approach in \cite{ni2012expected} for the diffusion process up to a deterministic time, if we can construct a martingale ${N}_{t}$ based on $\xi$ such that: \begin{itemize} \item[(a)] $\lim_{t\rightarrow \infty}{N}_{t} = S(B_{0, \tau_{\Omega}})$ where the limit are taken in both the almost surely and $L^{1}$-sense (note that $S(B_{0, \tau_{\Omega}})$ is proven to be $L^{1}$-integrable); and that  \item[(b)] ${N}_{0} = \xi(z)$, \end{itemize} then we have $\mathbb{E}^{z}\left[\underset{t\rightarrow \infty }{\lim}{N}_{t}\right] = \mathbb{E}^{z}[{N}_{0}] $ and hence $ \mathbb{E}^{z}[S(B_{0, \tau_{\Omega}})]= \xi(z)$ by Doob's martingale convergence theorem.

The next question is how to construct the martingale ${N}_{t}$.  For this purpose, note that $\hat{N}_{t}:= \mathbb{E}^{z}[S(B_{0, \tau_{\Omega}}) \vert \mathcal{F}_{t\wedge \tau_{\Omega}}]$ is a martingale (by Tower's property) and satisfies the above condition. Thus, in light of Chen's identity, $\hat{N}_{t}$ can be rewritten as $ S(B_{0, t \wedge \tau_{\Omega}}) \otimes \Phi(B_{t\wedge \tau_{\Omega}})$. This motivates us to consider $\hat{N}_{t}:=S(B_{0, t \wedge \tau_{\Omega}}) \otimes \xi(B_{t\wedge \tau_{\Omega}})$. Once we prove that $\hat{N}_{t}$ is a martingale, we may conclude that the PDE solution $\xi$ coincides with the expected signature $\Phi_{\Omega}$.

\begin{proof}
Since $\xi_{0} = \proj_0(\Phi_{\Omega}) \equiv  1$ and $\xi_{1} = \proj_{1}(\Phi_{\Omega}) \equiv 0$, we only need to prove that for any $n \geq 2$, one has $\xi_{n} = \proj_{n}(\Phi_{\Omega})$. To this end, set $N_{t} := S(B_{0, t}) \otimes \xi(B_{t})$. First of all, let us show that $N_{t}$ is a martingale, or equivalently, that $\proj_{n}(N_{t})$ is a martingale for every $n \geq 2$.

Indeed, note that
\begin{align}\label{sde_martingle}
\dd\pi_{n}(N_{t}) = \dd\proj_{n}\Big(S(B_{0, t}) \otimes \xi(B_{t})\Big) 
= \dd \left\{\sum_{i = 0}^{n}\proj_{i}\big(S(B_{0, t})\big) \otimes \xi_{n-i}(B_{t})\right\}
\end{align}
and, by the definition of the signature of Brownian motion, it holds that
\begin{align*}
\dd\proj_{n}(S(B_{0, t})) &= \proj_{n-1}(S(B_{0, t}))\otimes \,\dd B_{t} \nonumber\\
&=\proj_{n-1}(S(B_{0, t}))\otimes \cdot \,\dd B_{t} + \frac{1}{2}\proj_{n-2}(S(B_{0, t})) \otimes \sum_{j=1}^{d}  e_{j}\otimes e_{j}\, \dd t,
\end{align*}
where $\cdot \dd B_{t}$ is understood in the It\^{o} sense and $n \geq 2$.

On the other hand, we infer from the It\^o formulae that
\begin{eqnarray*}
\dd\xi_{n}(B_{t}) = \sum_{i = 1}^{d}\partial_{z_{i}}\xi_{n}(B_{t})\,\cdot \dd B^{i}_{t} + \frac{1}{2}\Delta \xi_{n}(B_{t})\,\dd t.
\end{eqnarray*}
Thus, the SDE of $\pi_{n}({N}_{t})$ in Eq.~\eqref{sde_martingle} can be rewritten as 
\begin{align*}
&\dd \Big(\proj_{i}\big(S(B_{0, t})\big) \otimes \xi_{n-i}(B_{t}) \Big) \\
&\quad = \dd \proj_{i}\big(S(B_{0, t})\big) \otimes \xi_{n-i}(B_{t}) +  \proj_{i}\big(S(B_{0, t})\big) \otimes \dd\xi_{n-i}(B_{t}) + \dd\Big\langle \proj_{i}\big(S(B_{0, t})\big),  \xi_{n-i}(B_{t})\Big\rangle \\
&\quad= \mathbf{1}_{i\geq 2}\frac{1}{2}\proj_{i-2}\big(S(B_{0, t})\big) \otimes \sum_{j=1}^{d}e_{j}\otimes e_{j} \,\dd t \otimes \xi_{n-i}(B_{t}) \\
&\qquad+ \proj_{i}\big(S(B_{t})\big)\otimes \frac{1}{2}\Delta \xi_{n-i}(B_{t})\,\dd t+ \sum_{j =1}^{d} \mathbf{1}_{i\geq 1}\proj_{i-1}\big(S(B_{t})\big)\otimes e_{j}\otimes \partial_{z_{j}}\xi_{n-i}(B_{t})+ Q^{n, i}_{t}\cdot \dd B_{t},
\end{align*}
where $i \leq n$ and
\begin{align*}
Q^{n, i}_{t} &= \proj_{i}(S(B_{0, t}))\otimes \sum_{j = 1}^{d}\partial_{z_{j}}\xi_{n-i}(B_{t})\,\cdot \dd B_{t} + \mathbf{1}_{i\geq 1}\proj_{i-1}(S(B_{0, t}))\otimes \cdot \dd B_{t} \otimes \xi_{n-i}(B_{t}).
\end{align*}

Summing the above equation over $i \in \{0, \ldots, n\}$, we get
\begin{eqnarray*}
&&\dd \proj_{n}(N_{t}) = \sum_{i = 0}^{n-2}\proj_{i}\big(S(B_{t})\big)\otimes \mathcal{R}_{n - i-2}(B_{t}) \,\dd t +\sum_{i = 0}^{n}Q^{n, i}_{t}\cdot \dd B_{t},
\end{eqnarray*}
where 
\begin{eqnarray*}
\mathcal{R}_{i}(z) = \frac{1}{2} \sum_{j=1}^{d}e_{j}\otimes e_{j} \otimes \xi_{i}(z) + \sum_{j=1}^{d} e_{j}\otimes \partial_{z_{j}}\xi_{i+1}(z) + \frac{1}{2}\Delta \xi_{i+2}(z). 
\end{eqnarray*}

By the fact that $\xi$ is a solution to the given PDE, the drift term equals zero. Therefore, $N_{t}$ is a local martingale. It then implies that $Y_{t}:= N_{t \wedge \tau_{\Omega}}$ is a local martingale.

It remains to show that $Y_{t}$ is a martingale. Indeed, for any $n$ and $i \in \{0, 1, \cdots, n\}$, and for all $t \in \mathbb{R}^{+}$, we have  $\mathbb{E}\left[\left| Q^{n,i}_{t \wedge \tau_{\Omega}}\right|^{2}\right] < \infty$. This is because for every $i \in \{0, 1, 2, \cdots\}$, it holds that
\begin{eqnarray*}
&& \mathbb{E}^{z}\left[\vert \proj_{i}(S(B_{0, t \wedge \tau_{\Omega}}))\vert^{2}\right] < \infty, \\
&& \mathbb{E}^{z}\left[\Big\vert \xi_{i}(B_{t \wedge \tau_{\Omega}})\Big\vert^{2} + \sum_{j=1}^{d} \Big\vert\partial_{z_{j}} \xi_{i}(B_{t \wedge \tau_{\Omega}})\Big\vert^{2}\right] < \infty.
\end{eqnarray*}
Then $\mathbb{E}^{z}[\vert Y_{t}\vert] < \infty$ by H\"{o}lder's inequality, and hence $Y_{t}$ is a martingale. Moreover, $\mathbb{E}^{z}[\vert Y_{t}\vert]$ is uniformly bounded, thus $\mathbb{E}^{z}[Y_{0}] = \mathbb{E}^{z}[Y_{t}]$ for all $t >0$. In addition,  $Y_{\infty}= S(B_{0, \tau_{\Omega}}) \in L^{1}$, so
\begin{eqnarray*}
\mathbb{E}^{z}[Y_{0}] = \lim_{t \rightarrow \infty} \mathbb{E}^{z}[Y_{t}] = \mathbb{E}^{z}\left[\lim_{t \rightarrow \infty }Y_{t}\right].
\end{eqnarray*}
We conclude by noting that $\mathbb{E}^{z}[Y_{0}] = \xi(z) $ and  $\mathbb{E}^{z}[\lim_{t \rightarrow \infty }Y_{t}] = \mathbb{E}^{z}[S(B)_{0, \tau_{\Omega}}]$. \end{proof}

\section{Symmetrization of PDEs and Hyperbolic development}\label{sec: symm}
To overcome the difficulty that general domains are not rotationally invariant, we integrate the development $\mathcal{M}_{\lambda,\Omega}$ (see Eq.~\eqref{dev-new}) over all the rotated domains centred at $x \in \Omega$. The resulting object is referred to as the ``domain-averaging development'' of expected signature in the sequel. 

In this section, we first explore  general properties of domain-averaging developments for any $M \in {\rm Hom}\big(T((E)), \mathfrak{gl}(d+1;\R)\big)$, and then specialise to the hyperbolic development $H=M$. 

%to ensure (1) it has the separation of variable form; (2) it preserves the finite radius of convergence if $\mathcal{M}_{\lambda,\Omega}$ has; (3)it satisfies the same PDE, as just as $\mathcal{M}_{\lambda,\Omega}$ when $\Omega$ is a disk \cite{boedihardjo2021expected}. When the develop is a hyperbolic development, the PDEs of the Domain-averaging development of expected signature can be reduced to the ODE system problem. 

\subsection{Domain-averaging development of the expected signature }
We start with defining several operations on domains. The translation of domain $\Omega$ by $h$ is $\Omega+h:= \{z+h:\, z \in \Omega \}$. For $R \in SO(d)$, write $\Omega_{R}:=R(\Omega)$, the domain obtained by rotating $\Omega$ with respect to the centre $x = 0$ (see Figure~\ref{fig:rotated_domain} for a 2-dimensional illustration). %When $x=0$, we abbreviate $\Omega_{\alpha}:= \Omega_{0,\alpha}\equiv R_{\alpha}(\Omega)$ . 

Fix an arbitrary $x \in \Omega$. As $\Omega$ is open, there is $\e \in ]0,1[$ such that $\overline{\B(x,\e)} \subset \Omega$. We first note that the expected signature is invariant under translations, namely that $\Phi_{\Omega}(x):=\Phi_{\Omega+h}(x+h)$ whenever $x \in\Omega$. Without loss of generality we may assume $x= 0 \in \Omega$, since otherwise one simply takes  $\tilde{\Omega} = \Omega - x$.

Recall that 
\begin{equation}\label{M}
\mathcal{M}_{\lambda, \Omega}(z) := \lambda M (\Phi_{\Omega} (z)) =  \sum_{n=0}^\infty \lambda^n M \proj_n[\Phi_\Omega(z)]
\end{equation}
for sufficiently small $\lambda$ (\cite{ni2012expected}, Theorem~3.6). Any $z \in \B(0, \e)$ lies in $\bigcap_{S\in SO(d)} \Omega_{S}$, so  $\mathcal{M}_{\lambda, \Omega_R}(z)$ is well defined for each $R \in SO(d)$.  

\begin{figure}
%\begin{minipage}[0.7\textwidth]
\centering
\includegraphics[scale = 0.7]{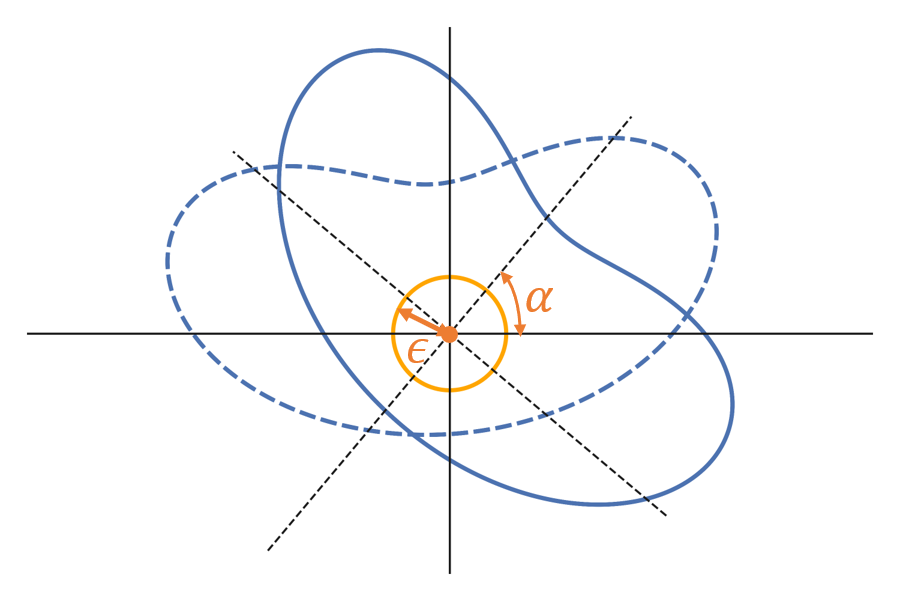}
\caption{Rotation of the 2-dimensional domain $\Omega$ (whose boundary is the solid blue curve) by degree $\alpha = \pi\slash 4$ with respect to the centre marked in orange. The boundary of the rotated domain $R_{\alpha}(\Omega)$ is the dashed blue curve, where $R_\alpha \in SO(2)$ is the rotation matrix by $\alpha$.}
\label{fig:rotated_domain}
%\end{minipage}
\end{figure}

\begin{definition}[Domain-averaging development of expected signature]\label{def: domain avg}
Let $\Omega\Subset \R^d$ be a domain containing $\overline{\B(0,\e)}$. Set
\begin{equation}\label{M avg}
\overline{\mathcal{M}_{\lambda,\e,\Omega}}(z) = \int_{SO(d)}\mathcal{M}_{\lambda, \Omega_{R}}(z) \,\dd\chi(R) \in \mathfrak{gl}(d+1;\R)\qquad \text{ for each } z \in \overline{\B(0,\e)},
\end{equation}
where $\chi$ is the normalised Haar measure on $SO(d)$, and integration of matrices in $\mathfrak{gl}(d+1;\R)$ is understood in the entry-wise sense. For ease of notations, we write $\overline{\mathcal{M}_{\lambda,\e}} := \overline{\mathcal{M}_{\lambda,\e,\Omega}}$.
\end{definition}

\begin{lemma}[Rotational ``symmetry'' of $\mathcal{M}_{\lambda,\Omega}$]
For any $z \in \B(0, \e)\subset \Omega$ and $R \in SO(d)$,
\begin{eqnarray}
\left(R \oplus  {\bf id}\right) \mathcal{M}_{\lambda, \Omega}(z)\left(R^{\top} \oplus {\bf id}\right) =  \mathcal{M}_{\lambda, \Omega_{R}}(Rz). \label{B}
\end{eqnarray}
\end{lemma}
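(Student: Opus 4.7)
The plan is to decompose the identity~\eqref{B} into two independent equivariances---one analytic, for the expected signature $\Phi_\Omega$ under the diagonal $SO(d)$-action on tensor powers, and one algebraic, for the development morphism $M$ under conjugation by $R \oplus {\bf id}$---and then assemble them term-by-term in the series expansion~\eqref{M}.

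First, I would establish the tensor-level transformation law
\[
\proj_n \Phi_{\Omega_R}(Rz) = R^{\otimes n}\, \proj_n \Phi_\Omega(z) \qquad \text{for every } n \in \mathbb{N},\, z \in \Omega,\, R \in SO(d).
\]
This rests on the rotational invariance in law of Brownian motion: if $(B_t)_{t\geq 0}$ is a standard Brownian motion starting at $z$ under $P^z$, then $(RB_t)_{t\geq 0}$ is a standard Brownian motion starting at $Rz$. Moreover $B_t \in \Omega$ if and only if $RB_t \in \Omega_R$, so the exit times agree almost surely, $\tau_{\Omega_R}(RB) = \tau_\Omega(B)$. Since the Stratonovich differential satisfies $\,\dd(RB_u) = R\,\dd B_u$, the iterated integrals pull out an $R$ at each factor, so that $\proj_n S(RB_{[0,\tau]}) = R^{\otimes n}\, \proj_n S(B_{[0,\tau]})$ pathwise. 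Taking expectations under $P^z$ yields the claim.

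Second, I would verify the base-level intertwining identity
\[
M(Rz) = (R \oplus {\bf id})\, M(z)\, (R^\top \oplus {\bf id}) \qquad \text{for every } z \in E,\, R \in SO(d),
\]
which for $M = H$ is a direct block-matrix calculation from the explicit formula of $H$, using only $R R^\top = I$. Invoking the multiplicativity of $M$ (its naturality across $\otimes$) together with the cancellation $(R^\top \oplus {\bf id})(R \oplus {\bf id}) = {\bf id}$, I would telescope to obtain, for every simple tensor $v = e_{i_1} \otimes \cdots \otimes e_{i_n}$,
\[
M(R^{\otimes n} v) = \prod_{k=1}^{n} (R \oplus {\bf id})\, M(e_{i_k})\, (R^\top \oplus {\bf id}) = (R \oplus {\bf id})\, M(v)\, (R^\top \oplus {\bf id}),
\]
and then extend by linearity to all of $E^{\otimes n}$.

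Finally, combining the two equivariances term-by-term in the expansion
\[
\mathcal{M}_{\lambda, \Omega_R}(Rz) = \sum_{n=0}^{\infty} \lambda^n\, M\!\left[\proj_n \Phi_{\Omega_R}(Rz)\right],
\]
which converges absolutely for $|\lambda|$ small by Corollary~\ref{cor: differentiability}, and factoring $R \oplus {\bf id}$ and $R^\top \oplus {\bf id}$ out of the summation, I would obtain
\[
\mathcal{M}_{\lambda, \Omega_R}(Rz) = (R \oplus {\bf id})\, \mathcal{M}_{\lambda, \Omega}(z)\, (R^\top \oplus {\bf id}),
\]
which is precisely~\eqref{B}. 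The main subtlety lies in cleanly articulating the almost-sure identity $\tau_{\Omega_R}(RB) = \tau_\Omega(B)$ and the ensuing pathwise transformation of the stopped signature; once that is in place, the remainder reduces to explicit linear algebra on $\mathfrak{gl}(d+1;\R)$ and a routine series manipulation within the regime of convergence supplied by the geometric bound~\eqref{geom bd}.
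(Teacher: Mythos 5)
Your proposal is correct and follows essentially the same route as the paper: rotational invariance in law of Brownian motion (together with the exit-time correspondence $\tau_{\Omega_R}(RB)=\tau_{\Omega}(B)$) combined with the conjugation intertwining of the development by $R\oplus{\bf id}$, which the paper simply cites as Lemma~3 of \cite{boedihardjo2021expected} rather than re-deriving. The only difference is that you unpack that cited lemma explicitly and correctly flag that the base-level identity $M(Rz)=(R\oplus{\bf id})M(z)(R^{\top}\oplus{\bf id})$ is a property of the specific morphism $H$ (or of developments satisfying this equivariance) rather than of an arbitrary $M$, a point the paper leaves implicit.
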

\begin{proof}
This extends the proof of Corollary~4 for the unit disc in \cite{boedihardjo2021expected}, where ${\bf id}$ is the identity map on $\R^1$ and $R^\top$ the transpose of $R$. Since the Brownian motion $B^{Rz}$
starting at $Rz$ has the same distribution as the rotated Brownian motion $R (B^{z})$ (with $B^z$ starting from $z$), for any $z \in \bigcap_{R\in SO(d)}R(\Omega)$ it holds that 
\begin{eqnarray*}
\mathbb{E}^{z}\left[S\left(RB\right)_{\tau_{\Omega}}\right]= \mathbb{E}^{Rz}\left[S(B)_{\tau_{R(\Omega)}}\right].\label{A}
\end{eqnarray*}
Applying $\lambda M$ to both sides of the above equation, we have
\begin{eqnarray*}
\lambda M \left(\mathbb{E}^{z}\left[S\left(RB\right)_{\tau_{\Omega}}\right]\right) = \lambda M\left( \mathbb{E}^{R(z)}\left[S(B)_{\tau_{R(\Omega)}}\right]\right).
\end{eqnarray*}
By Lemma 3 in \cite{boedihardjo2021expected} (which clearly holds in arbitrary dimension), it equals
\begin{eqnarray*}
\lambda M \left(S\left(RB\right)_{\tau_{\Omega}}\right)= \left(R \oplus  {\bf id}\right)\left\{ \lambda M\left[S\left(B\right)_{\tau_{\Omega_R}}\right]\right\} \left(R^{\top} \oplus {\bf id}\right).
\end{eqnarray*}
As $R$ is deterministic, it is furthermore equal to 
\begin{eqnarray*}
\left(R \oplus  {\bf id}\right) \Big\{(\lambda M)\mathbb{E}^{z}\left[S(B)_{\tau_{\Omega}}\right]\Big\}\left(R^{\top} \oplus {\bf id}\right) =  (\lambda M)\mathbb{E}^{Rz}\left[S(B)_{\tau_{\Omega_{R}}}\right]. 
\end{eqnarray*}
This concludes the proof.
\end{proof}

\begin{lemma}\label{Lemma_M_bar}
Write $r=|z|$ for $z \in \B(0,\e) \subset \Omega$. Then for any $R \in SO(d)$,
\begin{equation}\label{C}
(R \oplus {\bf id})   \overline{{\mathcal{M}}_{\lambda,\e}}(r) \left(R^{\top} \oplus {\bf id}\right) = \overline{\mathcal{M}_{\lambda,\e}}(z).
\end{equation}
\end{lemma}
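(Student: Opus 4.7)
The strategy is to deduce~\eqref{C} from the pointwise rotational covariance~\eqref{B} together with the left-invariance of the normalised Haar measure $\chi$ on $SO(d)$. Since any $R\in SO(d)$ preserves norms, the ball $\B(0,\e)$ is stable under rotation. I would interpret $\overline{\mathcal{M}_{\lambda,\e}}(r)$ as $\overline{\mathcal{M}_{\lambda,\e}}$ evaluated at a fixed reference point $z_0$ of norm $r$; then choosing $R\in SO(d)$ so that $Rz_0=z$ reduces~\eqref{C} to the \emph{rotational covariance of the averaged development},
\begin{equation*}
\overline{\mathcal{M}_{\lambda,\e}}(Rz) \,=\, (R\oplus {\bf id})\,\overline{\mathcal{M}_{\lambda,\e}}(z)\,(R^\top\oplus {\bf id}) \qquad \text{for every } z \in \B(0,\e),\ R\in SO(d),
\end{equation*}
which is what I plan to establish directly.

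Starting from $\overline{\mathcal{M}_{\lambda,\e}}(Rz) = \int_{SO(d)} \mathcal{M}_{\lambda,\Omega_S}(Rz)\,\dd\chi(S)$, I apply~\eqref{B} to the integrand with the ``$\Omega$'' slot set to $\Omega_{R^{-1}S}$. Since rotations compose as $(\Omega_{R^{-1}S})_R=\Omega_S$, this yields
\begin{equation*}
\mathcal{M}_{\lambda,\Omega_S}(Rz) \,=\, (R\oplus {\bf id})\,\mathcal{M}_{\lambda,\Omega_{R^{-1}S}}(z)\,(R^\top\oplus {\bf id}).
\end{equation*}
The conjugating matrices do not depend on the integration variable $S$ and therefore factor outside the entry-wise matrix-valued integral. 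A change of variable $T:=R^{-1}S$, combined with left-invariance of $\chi$, rewrites the remaining integral as $\int_{SO(d)} \mathcal{M}_{\lambda,\Omega_T}(z)\,\dd\chi(T) = \overline{\mathcal{M}_{\lambda,\e}}(z)$, completing the argument.

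I do not foresee any genuine obstacle: the proof is essentially bookkeeping driven by~\eqref{B} and Haar invariance. The only minor points to check --- that $Rz\in\B(0,\e)\subset\bigcap_{S\in SO(d)}\Omega_S$ so the integrands are pointwise well-defined, and that constant matrix conjugation commutes with the entry-wise Haar integration --- are both immediate.
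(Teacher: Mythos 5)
Your proposal is correct and follows essentially the same route as the paper: both reduce the claim to the pointwise covariance~\eqref{B} applied with a rotated domain in the ``$\Omega$'' slot, pull the $S$-independent conjugating matrices out of the entry-wise integral, and conclude by invariance of the Haar measure under the change of variable $S\mapsto R^{-1}S$ (the paper phrases this as conjugating $\mathcal{M}_{\lambda,\Omega_S}(r)$ into $\mathcal{M}_{\lambda,\Omega_{R\circ S}}(z)$ and then integrating, which is the same computation in the opposite direction). Your explicit handling of the reference point $(r,0,\ldots,0)^\top$ and of the left-invariance step merely spells out what the paper leaves implicit.
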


\begin{proof}
Let $z = R (r, \underbrace{0, \cdots, 0}_{d-1 \text{ zeros}})^{\top}\in \B(0,\e) \subset \bigcap_{O \in SO(d)}\Omega_O \subset \Omega$. We deduce from Eq.~\eqref{B} that 
\begin{equation*}
(R \oplus {\bf id}) \mathcal{M}_{\lambda, \Omega_{S}}(r)\left(R^{\top} \oplus {\bf id}\right) = \mathcal{M}_{\lambda, \Omega_{R\circ S}}\left(R(r, 0, \cdots, 0)^{\top}\right) = \mathcal{M}_{\lambda, \Omega_{R\circ S}}(z)
\end{equation*}
for any $S \in SO(d)$. We can now conclude by integrating both sides of  this equation against the Haar measure $\mu$ on $SO(d)$.   
\end{proof}

As in \cite{boedihardjo2021expected} let us consider $
    F_{\lambda, \Omega} := M_{\lambda, \Omega}(z) \cdot [\underbrace{0,\cdots, 0}_{d-1\text{ zeros}},1]^\top$, and similarly
\begin{equation}\label{F and M}
\overline{\mathcal{F}_{\lambda,\e}}(z) := \overline{\mathcal{M}_{\lambda,\e}}(z)\cdot [\underbrace{0,\cdots, 0}_{d-1\text{ zeros}},1]^\top.
\end{equation}
The following can be deduced directly from Lemma~\ref{Lemma_M_bar}.
\begin{lemma}[Separation of variables]\label{Lemma_F_bar}
For any $z \in \B(0,\e) \subset \Omega$ and $R \in SO(d)$, it holds that
\begin{equation}\label{eqn_F_bar}
\overline{\mathcal{F}_{\lambda,\e}}(z) =
(R \oplus {\bf id})   \overline{{\mathcal{F}}_{\lambda,\e}}\left( (r, \underbrace{0, \cdots, 0}_{d-1 \text{ zeros}})^{\top}\right).
\end{equation}
\end{lemma}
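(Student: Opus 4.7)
The plan is to deduce Lemma~\ref{Lemma_F_bar} as an immediate corollary of Lemma~\ref{Lemma_M_bar} by right-multiplying the matrix identity~\eqref{C} with the last standard basis vector $e_{d+1}:=(0,\ldots,0,1)^\top \in \R^{d+1}$ and then invoking the definition of $\overline{\mathcal{F}_{\lambda,\e}}$ from Eq.~\eqref{F and M}.

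Concretely, set $r_{0}:=(r,0,\ldots,0)^\top \in \R^d$ with $r=|z|$, and let $R \in SO(d)$ be the same rotation as in Lemma~\ref{Lemma_M_bar}, namely a rotation for which $z = R r_0$. Eq.~\eqref{C} then reads
\begin{equation*}
(R \oplus {\bf id})\, \overline{\mathcal{M}_{\lambda,\e}}(r_0)\, (R^\top \oplus {\bf id}) \,=\, \overline{\mathcal{M}_{\lambda,\e}}(z).
\end{equation*}
Right-multiplying both sides by $e_{d+1}$, I observe that $e_{d+1}$ is fixed by $R^\top \oplus {\bf id}$: its first $d$ entries vanish, on which $R^\top$ acts trivially, while ${\bf id}$ preserves its last entry. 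Hence $(R^\top \oplus {\bf id})\,e_{d+1}=e_{d+1}$, and the displayed equation reduces to
\begin{equation*}
(R \oplus {\bf id})\, \overline{\mathcal{M}_{\lambda,\e}}(r_0)\, e_{d+1} \,=\, \overline{\mathcal{M}_{\lambda,\e}}(z)\, e_{d+1}.
\end{equation*}
The definition~\eqref{F and M} of $\overline{\mathcal{F}_{\lambda,\e}}$ then converts this into Eq.~\eqref{eqn_F_bar}, as required.

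There is no substantive obstacle to overcome: the essence of the argument is the trivial invariance of the ``fibre direction'' $e_{d+1}$ under the stabiliser $SO(d)\oplus\{1\} \subset GL(d+1;\R)$. This is precisely what lets the two-sided matrix symmetry of Lemma~\ref{Lemma_M_bar} descend to the one-sided vector equivariance (``separation of variables'') sought here, so that $\overline{\mathcal{F}_{\lambda,\e}}$ on $\B(0,\e)$ is determined by its restriction to the radial slice $\{(r,0,\ldots,0)^\top : 0\leq r<\e\}$.
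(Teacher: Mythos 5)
Your proof is correct and takes the same route the paper intends: it simply spells out the one-line deduction from Lemma~\ref{Lemma_M_bar} (right-multiplication of Eq.~\eqref{C} by $e_{d+1}$, invariance of $e_{d+1}$ under $R^\top\oplus\mathbf{id}$, then the definition~\eqref{F and M}) that the paper dismisses as "deduced directly."
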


In addition, $\overline{\mathcal{F}_{\lambda,\e}}$ and $\mathcal{F}_{\lambda, \Omega}$ are related in the following manner:
\begin{lemma}
For $\B(0, \e) \subset \Omega$, let $\overline{C_{\lambda, \e}}(r) = [\underbrace{0,\cdots, 0}_{d-1\text{ zeros}}, 1]  \overline{\mathcal{F}_{\lambda,\e}}(r)$ and $C_{\lambda}(z) = [\underbrace{0,\cdots, 0}_{d-1\text{ zeros}}, 1] \mathcal{F}_{\lambda,\Omega}(z)$. Then $
C_{\lambda}(0) = \overline{C_{\lambda, \e}}(0)$.
\end{lemma}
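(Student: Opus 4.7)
The plan is to reduce the identity to the rotational symmetry relation \eqref{B} evaluated at the special point $z=0$, where the rotation action on $E=\R^d$ is trivial. First, unravelling the definitions gives
\begin{equation*}
\overline{C_{\lambda,\e}}(0) \;=\; [\underbrace{0,\ldots,0}_{d-1},1]\,\overline{\mathcal{M}_{\lambda,\e}}(0)\,[\underbrace{0,\ldots,0}_{d-1},1]^{\top} \;=\; \int_{SO(d)}[\underbrace{0,\ldots,0}_{d-1},1]\,\mathcal{M}_{\lambda,\Omega_R}(0)\,[\underbrace{0,\ldots,0}_{d-1},1]^{\top}\,\dd\chi(R),
\end{equation*}
by linearity of the bracket in the entries of the matrix and Fubini (both sides reduce to reading off the $(d+1,d+1)$-entry, which is a bounded linear functional, so it commutes with the Haar integral).

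Next I would invoke the rotational symmetry identity \eqref{B} with $z=0$: since $R\cdot 0 = 0$, we obtain
\begin{equation*}
\mathcal{M}_{\lambda,\Omega_R}(0) \;=\; (R\oplus {\bf id})\,\mathcal{M}_{\lambda,\Omega}(0)\,(R^{\top}\oplus {\bf id}).
\end{equation*}
The key observation is now purely algebraic: the block matrix $R\oplus{\bf id}\in\mathfrak{gl}(d+1;\R)$ fixes the last standard basis vector, in the sense that $[\,0,\ldots,0,1\,]\,(R\oplus {\bf id}) = [\,0,\ldots,0,1\,]$ and $(R^{\top}\oplus {\bf id})\,[\,0,\ldots,0,1\,]^{\top} = [\,0,\ldots,0,1\,]^{\top}$. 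Consequently,
\begin{equation*}
[\,0,\ldots,0,1\,]\,\mathcal{M}_{\lambda,\Omega_R}(0)\,[\,0,\ldots,0,1\,]^{\top} \;=\; [\,0,\ldots,0,1\,]\,\mathcal{M}_{\lambda,\Omega}(0)\,[\,0,\ldots,0,1\,]^{\top} \;=\; C_\lambda(0),
\end{equation*}
for \emph{every} $R\in SO(d)$; the integrand is therefore independent of $R$.

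Finally, integrating against the normalised Haar measure $\chi$ on $SO(d)$ gives
\begin{equation*}
\overline{C_{\lambda,\e}}(0) \;=\; \int_{SO(d)} C_\lambda(0)\,\dd\chi(R) \;=\; C_\lambda(0),
\end{equation*}
completing the proof. There is no genuine obstacle here: the content lies entirely in recognising that the centre of rotation $0$ is a fixed point of every $R\in SO(d)$, so the conjugation in \eqref{B} collapses on the last row and column of $\mathcal{M}_{\lambda,\Omega}(0)$. The only minor care needed is to justify interchanging the scalar projection with the Haar integral, which is immediate since $\overline{\mathcal{M}_{\lambda,\e}}$ is defined entry-wise to begin with.
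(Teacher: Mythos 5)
Your proposal is correct and follows essentially the same route as the paper: both specialize the rotational symmetry identity~\eqref{B} to $z=0$ (where $R\cdot 0 = 0$), observe that the $(d+1,d+1)$-entry of $(R\oplus\mathbf{id})\,\mathcal{M}_{\lambda,\Omega}(0)\,(R^{\top}\oplus\mathbf{id})$ equals the $(d+1,d+1)$-entry of $\mathcal{M}_{\lambda,\Omega}(0)$ because $R\oplus\mathbf{id}$ fixes the last standard basis vector, and then integrate the resulting constant integrand over the Haar measure. You have merely spelled out the algebra behind the paper's one-line observation that the $(d+1,d+1)$-entry is invariant under the conjugation; no discrepancy.
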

\begin{proof}
By Lemma~\ref{Lemma_M_bar} we have, for each $R \in SO(d)$, that
\begin{eqnarray*}
(R \oplus {\bf id})  \mathcal{M}_{\lambda,\Omega}(0) \left(R^{\top} \oplus {\bf id}\right) = \mathcal{M}_{\lambda,\Omega_R}(0).\end{eqnarray*}
The $(d+1, d+1)^{\text{th}}$ entry of both sides of the above equation is $
 C_{\lambda,\Omega}(0) =  C_{\lambda,\Omega_R}(0)$. So, integrating both sides over the angular variable gives us $C_{\lambda,\Omega}(0) =  \overline{C_{\lambda,\e}}(0).$  \end{proof}

\begin{lemma}[Preservation of finite radius of convergence]\label{lemma_FRC}
For $\B(0, \e) \subset \Omega$, if  $\overline{\mathcal{M}_{\lambda, \e}}(0)$ has finite radius of convergence, then so does $\mathcal{M}_{\lambda, \Omega}(0)$.
\end{lemma}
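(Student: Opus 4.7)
The plan is to compare term-by-term the power series in $\lambda$ defining $\mathcal{M}_{\lambda,\Omega}(0)$ and $\overline{\mathcal{M}_{\lambda,\e}}(0)$, leveraging the bi-invariance of the Hilbert--Schmidt norm on $\mathfrak{gl}(d+1;\R)$ under orthogonal multiplication. First, evaluating the rotational identity~\eqref{B} at $z=0$ (so that $R(0)=0$) gives
\begin{equation*}
    \mathcal{M}_{\lambda,\Omega_R}(0) \;=\; (R\oplus \mathbf{id})\, \mathcal{M}_{\lambda,\Omega}(0)\, (R^\top\oplus \mathbf{id}) \qquad \text{for every } R\in SO(d).
\end{equation*}
Integrating both sides against the Haar measure $\chi$ on $SO(d)$ and inserting the power-series representation~\eqref{M}, I would obtain
\begin{equation*}
    \overline{\mathcal{M}_{\lambda,\e}}(0) \;=\; \sum_{n=0}^{\infty} \lambda^n A_n, \qquad A_n := \int_{SO(d)} (R\oplus\mathbf{id})\, M\!\left[\proj_n(\Phi_\Omega(0))\right] (R^\top\oplus\mathbf{id}) \,\dd\chi(R).
\end{equation*}
The swap of sum and integral here is legitimate because the geometric majorant~\eqref{geom bd} depends only on the isometry type of $\Omega$ and hence applies uniformly in $R\in SO(d)$.

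Next, write $B_n := M[\proj_n(\Phi_\Omega(0))]$, so that $\mathcal{M}_{\lambda,\Omega}(0) = \sum_n \lambda^n B_n$. Since $R\oplus\mathbf{id}\in O(d+1)$ is orthogonal, the two-sided invariance of the Hilbert--Schmidt norm under orthogonal multiplications yields $\|(R\oplus\mathbf{id})\, B_n\, (R^\top\oplus\mathbf{id})\| = \|B_n\|$ for every $R\in SO(d)$. Combining this with the triangle inequality for Bochner-type matrix integrals (and the fact that $\chi$ is a probability measure) gives the crucial pointwise norm comparison
\begin{equation*}
    \|A_n\| \;\leq\; \int_{SO(d)} \|(R\oplus\mathbf{id})\, B_n\, (R^\top\oplus\mathbf{id})\|\,\dd\chi(R) \;=\; \|B_n\|.
\end{equation*}

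Finally, the Cauchy--Hadamard formula closes the argument: since $\limsup_n\|A_n\|^{1/n} \leq \limsup_n\|B_n\|^{1/n}$, the radius of convergence of $\overline{\mathcal{M}_{\lambda,\e}}(0)$ is no smaller than that of $\mathcal{M}_{\lambda,\Omega}(0)$, so finiteness of the former forces finiteness of the latter. I do not foresee any genuine obstacle; the only nontrivial ingredient is the orthogonal invariance of the Hilbert--Schmidt norm, which is classical, and the fact that averaging a family of matrices that are pairwise orthogonal conjugates of one another can only decrease (not increase) the Hilbert--Schmidt norm of each coefficient.
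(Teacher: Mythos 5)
Your proof is correct and takes essentially the same route as the paper: both arguments rest on the conjugation identity $\mathcal{M}_{\lambda,\Omega_R}(0) = (R\oplus\mathbf{id})\,\mathcal{M}_{\lambda,\Omega}(0)\,(R^\top\oplus\mathbf{id})$ from Lemma~\ref{Lemma_M_bar} together with the invariance of the Hilbert--Schmidt norm under orthogonal conjugation, so that averaging over $SO(d)$ can only decrease the norm of each power-series coefficient and hence can only increase the radius of convergence. The paper compresses this to ``the assertion follows immediately from the definition of radius of convergence,'' whereas you spell out the term-by-term estimate $\|A_n\|\leq\|B_n\|$ and invoke Cauchy--Hadamard explicitly, which is a faithful and somewhat more transparent rendering of the same argument.
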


\begin{proof}
By Lemma~\ref{Lemma_M_bar} we have $(R \oplus {\bf id})  \mathcal{M}_{\lambda,\Omega}(0) \left(R^{\top} \oplus {\bf id}\right) = \mathcal{M}_{\lambda,\Omega_R}(0)$, which is equivalent to
\begin{eqnarray*}
 \mathcal{M}_{\lambda,\Omega}(0)  = (R^{\top} \oplus {\bf id})  \mathcal{M}_{\lambda,\Omega_{R}}(0)\left(R \oplus {\bf id}\right).
\end{eqnarray*}
The assertion follows immediately from the definition of radius of convergence. \end{proof}

\subsection{Symmetrization of PDEs}\label{subsec:key}
%We shall extend \cite{boedihardjo2021expected}, Lemma~2 to general bounded domains. Note that by 
By Corollary~\ref{cor: differentiability} (compare also with \cite{ni2012expected}),  the map $z \mapsto \Phi_{\Omega}(z)$ is twice differentiable on  $\Omega$. This leads to

\begin{theorem}[PDE for $\overline{\mathcal{F}_{\lambda,\e}}$]\label{thm: pde for average, new}
Assume $\B(0,\e) \subset \Omega$. There exists a $\lambda^{*} >0$ such that for any $\lambda \in \C$ with $|\lambda| < \lambda^{*}$, $\mathcal{F}_{\lambda, \Omega}$ is twice continuously differentiable on $\B(0,\e)$. Moreover, it satisfies
\begin{equation}
\Delta \overline{\mathcal{F}_{\lambda, \e}}(z)=-2\lambda\sum_{i=1}^{d}Me_{i}\frac{\partial \overline{\mathcal{F}_{\lambda, \e}}}{\partial z_{i}}(z)-\lambda^{2}\left(\sum_{i=1}^{d}\left(Me_{i}\right)^{2}\right)\overline{\mathcal{F}_{\lambda, \e}}(z) \qquad\text{for any $z \in \B(0,\e)$}.\label{eq:Lemma2PDE} 
\end{equation}
\end{theorem}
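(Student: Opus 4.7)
The strategy is to first derive the same PDE for each unaveraged development $\mathcal{M}_{\lambda,\Omega_R}$ ($R\in SO(d)$) on the common ball $\B(0,\e)\subset\bigcap_{R\in SO(d)}\Omega_R$, then pass to the average by commuting the differential operators with the Haar integral, and finally right-multiply by the column vector $(0,\ldots,0,1)^{\top}$ to extract the statement about $\overline{\mathcal{F}_{\lambda,\e}}$.

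The preparatory step is to choose $\lambda^{*}\in \,]0,L[$ with $L=L(\Omega,\|M\|_{\rm op})$ the constant from Corollary~\ref{cor: differentiability}; note that the geometric bound \eqref{geom bd} is invariant under rotations of $\Omega$, so the same $L$ works uniformly for every $\Omega_R$. Consequently, for $|\lambda|<\lambda^{*}$ the partial sums $\sum_{n=0}^{N}\lambda^{n}M[\proj_{n}\Phi_{\Omega_R}(z)]$ converge in the $C^{2}$-norm uniformly in $(z,R)\in \overline{\B(0,\e)}\times SO(d)$, and derivatives up to order two can be taken term-by-term. (The extension from $\lambda \in [0,L[$ to $\lambda\in\C$ with $|\lambda|<L$ is immediate from holomorphy of the series in $\lambda$.) Next, apply the operator $\lambda M$ term-by-term to Eq.~\eqref{eq:LyonsNiPDE} of Proposition~\ref{prop: schauder}. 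By naturality, $M[e_i\otimes e_i \otimes \proj_{n-2}\Phi_{\Omega_R}]=(Me_i)^{2}\cdot M[\proj_{n-2}\Phi_{\Omega_R}]$, and weighting by $\lambda^{n}$ produces $\lambda^{2}(Me_i)^{2}\cdot \lambda^{n-2}M[\proj_{n-2}\Phi_{\Omega_R}]$; the mixed term similarly produces $2\lambda\, Me_i\cdot \lambda^{n-1}\partial_{z^i}M[\proj_{n-1}\Phi_{\Omega_R}]$. Summing $n\geq 2$ and exchanging $\Delta,\partial_{z^i}$ with the summation yields
\begin{equation*}
  \Delta\mathcal{M}_{\lambda,\Omega_R}(z) = -2\lambda\sum_{i=1}^{d} Me_i\,\frac{\partial \mathcal{M}_{\lambda,\Omega_R}}{\partial z^{i}}(z) - \lambda^{2}\!\left(\sum_{i=1}^{d}(Me_i)^{2}\right)\mathcal{M}_{\lambda,\Omega_R}(z)
\end{equation*}
for every $R\in SO(d)$ and every $z\in\B(0,\e)$. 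Finally, integrate both sides against the normalised Haar measure $\chi$ on the compact group $SO(d)$. Uniform convergence in $R$ permits the exchange of $\Delta,\partial_{z^i}$ with $\int_{SO(d)}\dd\chi(R)$; the constant matrices $Me_i,(Me_i)^{2}$ pull through the integral. One therefore obtains the same PDE with $\overline{\mathcal{M}_{\lambda,\e}}$ in place of $\mathcal{M}_{\lambda,\Omega_R}$, and right-multiplication of both sides by the constant column vector $(0,\ldots,0,1)^{\top}$ delivers \eqref{eq:Lemma2PDE}.

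The only delicate point is the simultaneous justification of (i) termwise differentiation up to order two of the series defining $\mathcal{M}_{\lambda,\Omega_R}$, and (ii) the interchange of these derivatives with integration over $SO(d)$. Both are handled by the uniform-in-$R$ $C^{2}$-bound on partial sums supplied by Corollary~\ref{cor: differentiability}, itself a consequence of the rotation-invariance of the geometric bound \eqref{geom bd}. No other step requires more than bookkeeping, because the action of $\lambda M$ is purely algebraic and the Haar average is taken over a compact group.
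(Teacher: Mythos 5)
Your proposal is correct and follows essentially the same route as the paper: derive (or invoke) the second-order PDE satisfied by each unaveraged development $\mathcal{M}_{\lambda,\Omega_R}$ on $\B(0,\e)$ for $|\lambda|<\lambda^{*}$, then average over $SO(d)$ by differentiating under the Haar integral, the constant matrices $Me_i$ and $(Me_i)^2$ pulling through. The only difference is presentational — you spell out the termwise application of $\lambda M$ to the nested system \eqref{eq:LyonsNiPDE} and the uniformity in $R$ of the $C^2$-bounds, which the paper leaves implicit by citing the unaveraged PDE directly.
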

\begin{proof}
Consider
\begin{eqnarray}\label{lambda star}
\lambda^{*} = \sup\left\{|\lambda| :\,\sum_{n = 0}^{\infty} \lambda^{n} \left\vert\left\vert \proj_{n}(\Phi_{\Omega}(z))\right\vert\right\vert < \infty \text{ for all }  z \in \B(0, \e)\right\}.
\end{eqnarray}
As remarked above, $z \mapsto \overline{\mathcal{F}_{\lambda, \e}(z)}$ is $C^2$ on $\B(0, \e)$. The definition of $\overline{\mathcal{F}_{\lambda, \e}}$ and Eq.~\eqref{M avg} give us
\begin{equation}\label{F avg}
\overline{\mathcal{F}_{\lambda,\e}}(z) =  \int_{SO(d)}\mathcal{F}_{\lambda, \Omega_{R}}(z) \,\dd\chi(R).
\end{equation}

We shall use the PDE for $\mathcal{F}_{\lambda, \Omega_{R}}$ to derive that for $\overline{\mathcal{F}_{\lambda,\e}}$. Indeed, for any $R \in SO(d)$ we have
\begin{equation*}
\Delta \mathcal{F}_{\lambda, \Omega_{R}}(z)=-2\lambda\sum_{i=1}^{d}Me_{i}\frac{\partial \mathcal{F}_{\lambda, \Omega_{R}}}{\partial z_{i}}(z)-\lambda^{2}\left(\sum_{i=1}^{d}\left(Me_{i}\right)^{2}\right)\mathcal{F}_{\lambda, \Omega_{R}}(z).  
\end{equation*}
As $\mathcal{F}_{\lambda,\Omega_R}$ is $C^2$, we can differentiate under integral signs to get
\begin{align}\label{eqn_F_alpha}
&\int_{SO(d)} \Delta \mathcal{F}_{\lambda, \Omega_{R}}(z) \,\dd\chi(R)\nonumber\\
&\qquad =-2\lambda\sum_{i=1}^{d}Me_{i}\int_{SO(d)} \frac{\partial \mathcal{F}_{\lambda, \Omega_{R}}}{\partial z_{i}}(z)\,\dd\chi(R) -\lambda^{2}\left(\sum_{i=1}^{d}\left(Me_{i}\right)^{2}\right)\overline{ \mathcal{F}_{\lambda, \e}(z)}.  
\end{align}   \end{proof}

\subsection{Hyperbolic development} Recall that the hyperbolic development (Hambly--Lyons \cite{hl}; Boedihardjo--Diehl--Mezzarobba--Ni \cite{boedihardjo2021expected}) is the morphism $H: T((E)) \to \mathfrak{gl}(d+1;\R)$ ---
\begin{eqnarray*}
&&H (x,y) := \begin{bmatrix}
0&0&\cdots &0 & z^1\\
0&0&\cdots &0 & z^2\\
\vdots & \vdots &\ddots &\vdots& \vdots \\
0&0&\cdots &0 & z^d\\
z^1&z^2&\cdots &z^{d}&0
\end{bmatrix}\qquad \text{ for } z = (z^1,\ldots,z^d) \in E,\\
&&H(v_1 \otimes\cdots\otimes v_n) := H(v_1) \cdot \ldots \cdot H(v_n)\quad \text{ for any $n\in\mathbb{N}$ and $v_1,\ldots,v_n \in E$}.
\end{eqnarray*}
Let $\overline{\mathcal{H}_{\lambda,\e}}$ denote the last column of domain-averaging development of expected stopped Brownian motion $\overline{\mathcal{F}_{\lambda, \e}}$ corresponding to the hyperbolic development $H$.

\begin{theorem}[ODE system of $\overline{\mathcal{H}_{\lambda,\e}}$]\label{Thm_F_bar}
Write, in the case $d=2$,
\begin{align*}
    \overline{\mathcal{H}_{\lambda,\e}}(r) =: \left[\bar{A}_\lambda(r),\bar{B}_{\lambda}(r), \bar{C}_{\lambda}(r)\right]^\top.
\end{align*}
When $|\lambda|<\lambda^\star$ in Eq.~\eqref{lambda star}, the following ODE system holds:
\begin{eqnarray}
&&r^{2}\bar{A}_{\lambda}^{''} + r\bar{A}_{\lambda}^{'}(r) -\bar{A}_{\lambda}(r) + \lambda^{2}r^{2}\bar{A}_{\lambda}(r) + 2\lambda r^{2}\bar{C}_{\lambda}^{'}(r) = 0,\label{A eq}\\
&&r^2\bar{B}_{\lambda}^{''}(r) + r\bar{B}_{\lambda}'(r) + (\lambda^2r^2-1)\bar{B}_{\lambda}(r)= 0,\label{B eq}\\
&&\bar{C}^{'}_{\lambda}(r) + r\bar{C}^{''}_{\lambda}(r) + 2\lambda^{2}r\bar{C}_{\lambda}(r) + 2\lambda r \bar{A}^{'}_{\lambda}(r) + 2\lambda \bar{A}_{\lambda}(r)= 0.\label{C eq}
\end{eqnarray}
To ensure that $\overline{\mathcal{H}_{\lambda,\e}}$ is $C^2$ at the origin $x=0$, one needs to impose the conditions
\begin{equation}
\label{boundary conditions, ABC}
\bar{A}_{\lambda}(0) = 0,\quad\bar{B}_{\lambda}(0) = 0,\quad\bar{C}^{'}_{\lambda}(0) = 0.
\end{equation}
\end{theorem}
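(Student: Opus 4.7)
My plan is to specialize the matrix PDE of Theorem~\ref{thm: pde for average, new} to the hyperbolic development $M=H$ in dimension $d=2$, and then exploit the rotational equivariance supplied by Lemma~\ref{Lemma_F_bar} to ``polarize'' the resulting vector PDE into three scalar ODEs in $r$. The $C^2$-regularity furnished by Corollary~\ref{cor: differentiability} will then force the initial conditions at $r=0$.

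\textbf{Reduction to a scalar system.} First I would compute directly the matrices $H(e_1),H(e_2)\in\mathfrak{gl}(3;\R)$ and verify $\sum_{i=1}^2 \bigl(H(e_i)\bigr)^2=\mathrm{diag}(1,1,2)$. Writing $\overline{\mathcal{H}_{\lambda,\e}}=(h^1,h^2,h^3)^\top$ and extracting the last column of Eq.~\eqref{eq:Lemma2PDE}, Theorem~\ref{thm: pde for average, new} becomes the decoupled-looking scalar system
\begin{align*}
\Delta h^1 &= -2\lambda\,\p_{z_1} h^3 - \lambda^2 h^1,\\
\Delta h^2 &= -2\lambda\,\p_{z_2} h^3 - \lambda^2 h^2,\\
\Delta h^3 &= -2\lambda\bigl(\p_{z_1} h^1+\p_{z_2} h^2\bigr) - 2\lambda^2 h^3.
\end{align*}
Next, Lemma~\ref{Lemma_F_bar} asserts $\overline{\mathcal{H}_{\lambda,\e}}(Rz)=(R\oplus{\bf id})\overline{\mathcal{H}_{\lambda,\e}}(z)$ for every $R\in SO(2)$. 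This forces $h^3$ to be radial and $(h^1,h^2)$ to be a rotation-equivariant planar vectorfield; writing $z=(r\cos\theta,r\sin\theta)$, there exist scalars $\bar{A}_\lambda,\bar{B}_\lambda,\bar{C}_\lambda$ (matching the definitions in the statement upon setting $\theta=0$) with
\begin{equation*}
h^3(z)=\bar{C}_\lambda(r),\qquad (h^1,h^2)(z)=\bar{A}_\lambda(r)\,\hat{r}+\bar{B}_\lambda(r)\,\hat{\theta},
\end{equation*}
where $\hat{r}=(\cos\theta,\sin\theta)^\top$ and $\hat\theta=(-\sin\theta,\cos\theta)^\top$.

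\textbf{Polarisation and matching coefficients.} Plugging the ansatz into the PDE system, I would compute (i) the scalar Laplacians $\Delta h^1,\Delta h^2$, which, because $\hat r,\hat\theta$ carry $\theta$-dependence, yield terms of the form $\{\bar{A}''_\lambda+r^{-1}\bar{A}'_\lambda-r^{-2}\bar{A}_\lambda\}\cos\theta$ plus the analogous combination for $\bar{B}_\lambda$ with $-\sin\theta$; (ii) the derivatives $\p_{z_1}h^3=\bar{C}'_\lambda(r)\cos\theta$ and $\p_{z_2}h^3=\bar{C}'_\lambda(r)\sin\theta$; and (iii) the divergence $\p_{z_1}h^1+\p_{z_2}h^2$, which a short polar-coordinate computation reveals to equal $\bar{A}'_\lambda(r)+\bar{A}_\lambda(r)/r$ (only the $\bar{A}$-component contributes, the $\bar{B}$-terms canceling out by symmetry). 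Using linear independence of $\{\cos\theta,\sin\theta\}$ in the first two PDEs, and multiplying through by $r^2,r^2$ and $r$ respectively, one reads off \eqref{A eq}, \eqref{B eq}, and \eqref{C eq}.

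\textbf{Initial conditions and main obstacle.} By Corollary~\ref{cor: differentiability}, $\overline{\mathcal{H}_{\lambda,\e}}\in C^2(\B(0,\e))$ for $|\lambda|<\lambda^\star$. Continuity of the equivariant vector $(h^1,h^2)$ at the origin requires its value there to be direction-independent, forcing $\bar{A}_\lambda(0)=\bar{B}_\lambda(0)=0$ (else the limit would depend on $\theta$). For the radial scalar $z\mapsto \bar{C}_\lambda(|z|)$, $C^1$-regularity at the origin demands $\nabla h^3(0)=0$, which is equivalent to $\bar{C}'_\lambda(0)=0$, thereby giving the boundary conditions~\eqref{boundary conditions, ABC}. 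The only genuinely delicate step is the polar-coordinate bookkeeping in the second paragraph --- particularly tracking how the angular derivatives of $\hat{r},\hat{\theta}$ generate the $-r^{-2}$ terms in the equations for $\bar{A}_\lambda,\bar{B}_\lambda$, and verifying that the $\bar{B}$-contribution to the divergence in~\eqref{C eq} indeed vanishes, so that the third ODE couples only $\bar{A}_\lambda$ with $\bar{C}_\lambda$ and decouples from $\bar{B}_\lambda$ entirely.
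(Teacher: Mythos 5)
Your proposal is correct and follows essentially the same route as the paper: the paper likewise combines the averaged PDE of Theorem~\ref{thm: pde for average, new} with the rotational equivariance $\overline{\mathcal{H}_{\lambda,\e}}(re^{i\theta})=(R_\theta\oplus{\bf id})\overline{\mathcal{H}_{\lambda,\e}}(r)$ and then reduces to the ODE system by the polar-coordinate computation, which it defers to Lemma~6 of \cite{boedihardjo2021expected} rather than writing out. Your explicit polarisation of the planar component into $\bar{A}_\lambda\hat{r}+\bar{B}_\lambda\hat{\theta}$, the coefficient matching in $\cos\theta,\sin\theta$, and the derivation of the conditions at $r=0$ from $C^2$-regularity all check out.
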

\begin{proof}
As in \cite{boedihardjo2021expected}, a straightforward computation using Eq.~\eqref{C} and Lemma \ref{Lemma_M_bar} yields that 
\begin{equation}\label{F sep var}
\overline{\mathcal{H}_{\lambda,\e}}\left(r e^{i\theta}\right) = (R_{\theta} \oplus {\bf id})   \overline{\mathcal{H}_{\lambda,\e}}(r),
\end{equation}
where $R_\theta=\begin{bmatrix}
\cos\theta&-\sin\theta\\
\sin\theta & \cos\theta
\end{bmatrix}\in SO(2)$. The PDE for $\overline{\mathcal{H}_{\lambda,\e}}$ is the same as that for $\mathcal{H}_{\lambda,\B(0, 1)}$. %, and that  $\overline{\mathcal{H}_{\lambda,\e}}$ and $\mathcal{H}_{\lambda,\B(0, 1)}$ have the same form of the separation of variable. 
Adapting almost verbatim the arguments in \cite{boedihardjo2021expected}, Lemma~6, we conclude that the ODE for $\overline{\mathcal{H}_{\lambda,\e}}$ is the same as that for $\mathcal{H}_{\lambda,\B(0, 1)}$ given in Eqs.~\eqref{A eq},  \eqref{B eq},  and \eqref{C eq}. \end{proof}

In contrast to the  case that $\Omega$ is the unit disc, roughly speaking, the PDE for $\overline{\mathcal{H}_{\lambda, \e}}$ misses ``half of the boundary conditions'', namely the conditions on $\partial \B(0, \e)$. To address this problem, we make use of geometric properties of the hyperbolic development.

%we consider the reflected domain  $\underline{\Omega} := \{ (-x^{1}, x^{2}): (x^{1}, x^{2}) \in \Omega\}$ and  $\overline{\mathcal{H}_{\lambda,\underline{\Omega},\e}} =: (\underline{A}_{\lambda}(\rho), \underline{B}_{\lambda}(\rho), \underline{C}_{\lambda})$. One can derive a uniform lower bound for $\bar{C}_{\lambda}(\rho)$. %+\overline{C}_{\lambda}(\rho)$, using {\color{red}a geometrical argument}; (b) the trivial boundary condition for $\bar{A}_{\lambda}(\rho) + \underline{A}_{\lambda}(\rho)$.

\begin{lemma}\label{lem: hyperbolic dev}
Let $\B(0, \e) \subset \Omega$ and let $\lambda^\ast$ be as in Eq.~\eqref{lambda star}. For every $\lambda \in [0, \lambda^{*}[$ and $r \in [0, \e]$, set $\overline{\mathcal{H}_{\lambda, \e}}(r) := \left[h_{\lambda}^{(1)}(r), \cdots, h_{\lambda}^{(d+1)}(r)\right]^\top$. Then  $h^{(d+1)}_{\lambda}(r) \geq 1$ for every $r \in [0,\e]$.
\end{lemma}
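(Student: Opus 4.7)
The plan is to give $h^{(d+1)}_\lambda(r)$ a probabilistic interpretation as an $SO(d)$-average of expectations of the $(d+1)$-th coordinate of the hyperbolic development of the stopped Brownian motion, and then to exploit the elementary fact that the hyperbolic development always lands in the upper sheet $\mathbb{H}_d$ of the hyperboloid, on which the $(d+1)$-th coordinate is bounded below by $1$.

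First, working in the regime $|\lambda|<\lambda^{*}$ from Eq.~\eqref{lambda star}, I would rewrite the non-averaged hyperbolic development as an expectation. By Lemma~\ref{lemma_hyperbolic_dev} together with Corollary~\ref{cor: differentiability}, for each $R\in SO(d)$ and $z\in\B(0,\e)$,
\begin{align*}
\mathcal{H}^{(d+1)}_{\lambda,\Omega_R}(z) = \sum_{n\geq 0}\sum_{w\in\mathcal{E}_{2n}} \lambda^{2n}\, \E^{z}\!\left[\proj_w\big(S(B_{[0,\tau_{\Omega_R}]})\big)\right],
\end{align*}
and I would use the geometric bound~\eqref{geom bd}, uniform in $R$ since $\Omega$ is bounded, to swap sum and expectation and obtain $\mathcal{H}^{(d+1)}_{\lambda,\Omega_R}(z) = \E^{z}[h^{(d+1)}_\lambda(B_{[0,\tau_{\Omega_R}]})]$.

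Next, I would establish the pathwise estimate $h^{(d+1)}_\lambda(B_{[0,\tau_{\Omega_R}]}(\omega))\geq 1$ for $P^z$-a.e.\ $\omega$. By Lyons--Xu \cite{lyons2017hyperbolic}, for any bounded-variation path $\gamma$ the endpoint $h_\lambda(\gamma)=H(S(\lambda\gamma))(0,\ldots,0,1)^\top$ lies on $\mathbb{H}_d$, so $h^{(d+1)}_\lambda(\gamma)=\sqrt{1+\sum_{i=1}^{d}(h^{(i)}_\lambda(\gamma))^{2}}\geq 1$. To transfer this bound to Brownian rough paths, I would approximate $B_{[0,\tau_{\Omega_R}]}$ by piecewise-linear dyadic interpolations $\gamma_k$ (Wong--Zakai; \emph{cf.}\ Lemma~\ref{Lemma_BM_GRP}); the factorial decay of iterated integrals of $p$-rough paths combined with the series formula of Lemma~\ref{lemma_hyperbolic_dev} then yields $h^{(d+1)}_\lambda(\gamma_k)\to h^{(d+1)}_\lambda(B_{[0,\tau_{\Omega_R}]})$ a.s.\ for small $|\lambda|$, so the limit inherits the lower bound.

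Finally, taking expectation (which preserves $\geq 1$) and averaging over $SO(d)$ against the normalised Haar measure $\chi$, and using Lemma~\ref{Lemma_F_bar} (so that $h^{(d+1)}_\lambda$ depends only on $r=|z|$), I would conclude that for any fixed $z\in\B(0,\e)$ with $|z|=r$,
\begin{align*}
h^{(d+1)}_\lambda(r) = \int_{SO(d)} \mathcal{H}^{(d+1)}_{\lambda,\Omega_R}(z)\,\dd\chi(R) \geq \int_{SO(d)} 1\,\dd\chi(R) = 1.
\end{align*}
The hardest part will be rigorously commuting the expectation with the hyperbolic-development series and justifying the a.s.\ pathwise convergence $h^{(d+1)}_\lambda(\gamma_k)\to h^{(d+1)}_\lambda(B_{[0,\tau_{\Omega_R}]})$; both reduce to controlling moments of iterated integrals of the stopped Brownian motion, which follow from the uniform bound~\eqref{geom bd} on $\|\proj_n(\Phi_\Omega)\|$ combined with Jensen's inequality and standard Brownian signature moment estimates.
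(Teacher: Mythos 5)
Your proposal is correct and follows essentially the same route as the paper: both arguments reduce to the Lyons--Xu fact that the hyperbolic development of a bounded $1$-variation path lies on the upper sheet $\mathbb{H}_d$, transfer this pathwise lower bound to the stopped Brownian rough path via dyadic piecewise-linear approximations, interchange $\lambda H$ with the expectation using the absolute convergence of the series for $|\lambda|<\lambda^{*}$, and then observe that the bound survives expectation and the $SO(d)$-average. If anything, your explicit treatment of the Haar-measure averaging step is slightly more careful than the paper's, which leaves it implicit.
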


\begin{proof}
As the hyperbolic development is a normed algebra morphism, for any $\lambda \in [0, \lambda^{*}[$ with $\lambda^{*}$ defined in Eq.~ \eqref{eq:Lemma2PDE},
\begin{align*}
||(\lambda H)(\Phi_{\Omega}(z))|| &= \left\|\sum_{n\geq 0} \lambda^n H(\proj_n(\Phi_{\Omega}(z))\right\|\\
&\leq \sum_{n \leq 0} \lambda^{n} \|H\|^n\|\proj(\Phi_{\Omega}(z))\|\\
&\leq \sum_{n \leq 0} \lambda^{n}\|\proj(\Phi_{\Omega}(z))\|<\infty,
\end{align*}
where the norm of $H$ is $1$. 
By the dominated convergence theorem, we can interchange the expectation and $\lambda H$ to obtain that
\begin{eqnarray*}
\lambda H (\Phi_{\Omega}(z)) = \mathbb{E}^{z}\left[\lambda H \left(S(B)_{0, \tau_{\Omega}}\right)\right].
\end{eqnarray*} 

By Lemma~\ref{Lemma_BM_GRP}, the Stratonovich signature of the Brownian motion $\left(S(B)_{0, t}\right)_{t\in [0, T]}$ is a geometric rough path for any finite $T> 0$. That is, there exists a sequence of bounded 1-variation paths $\left(B^{m}_{[0,T]}\right)_{m =1}^{\infty}$ whose signatures  converge almost surely to $\left(S(B)_{0, t}\right)_{t \in [0, T]}$ in the $p$-variation distance for any $p >2$. For instance, we can choose $B^{m}_{[0, T ]}$ as the dyadic piecewise linear approximation of Brownian motion with mesh size $2^{-m}$ up to time $T$.

It then holds that 
%Because for any positive integer $n$, $||\proj_{n}(\mathbb{E}[S(B^{m})_{0, \Omega}])||$ is uniformly bounded by $C \cdot \mathbb{E}[\tau_{ \Omega}^{n/2}]$, it holds that
\begin{eqnarray*}
\lim_{m \rightarrow \infty} \lambda H \left(S\left(B^{m}_{[0, T]}\right)\right) \overset{a.s}{=} \lambda H\left(S\left(B_{[0, T]}\right)\right).
\end{eqnarray*}
As $\mathbb{E}^{z}[\tau_{\Omega}]$ is almost surely finite, we can send $T$ and $m$ to $+\infty$ to get
\begin{eqnarray*}
\lim_{T \rightarrow \infty }\lim_{m \rightarrow \infty} \lambda H \left(S\left(B^{m}_{\left[0, T\wedge \tau_\Omega\right]}\right)\right) \overset{a.s}{=} \lambda H\left(S\left(B_{\left[0, \tau_\Omega\right]}\right)\right).
\end{eqnarray*}

Note that for any $T>0$ and $m \in \mathbb{N}$, the boundness of $1$-variation of $B^{m}_{[0, T]}$ in the almost sure sense ensures that $\lambda H\left(S\left(B^{m}_{[0, T]}\right)\right)$ is finite almost surly.
Lyons--Xu \cite{lyons2017hyperbolic} showed that any bounded 1-variation path $\gamma$ satisfies $
H\left(S(\gamma)\right)\cdot  [0,\cdots, 0,1]^\top \in \mathbb{H}_d$, with 
\begin{eqnarray*}\mathbb{H}_d := \left\{\left(x^{(1)},\cdots, x^{(d)},x^{(d+1)}\right)\in\R^{d+1}: \sum_{i = 1}^{d}\left(x^{(i)}\right)^2 - \left(x^{(d+1)}\right)^2=-1,\, x^{(d+1)}>0\right\}.
\end{eqnarray*}
That is, the last co-ordinate of the hyperbolic development of the signature of $B^{m}_{[0, T \wedge \tau_{\Omega}]} \in \mathbb{N}$ is no less than $1$ almost surely for all $m$ and $T>0$. Thus, the expectation of the limit of $\left(\lambda H(S(B^{m})_{[0, T \wedge \tau_{\Omega}}])^{(d+1)}\right)_{m, T}$, as $m,T \nearrow \infty$, is greater than or equal to $1$.

Therefore, we can conclude the desired result:
\begin{align*}
    h^{(d+1)}_{\lambda}(r) =  \left( \mathbb{E}^{z}\left[\lambda H\left(S\left(B_{\left[0, \tau_{\Omega}\right]}\right)\right)\right]\left(0, \cdots, 0, 1\right)^{\top} \right)^{(d+1)}\geq 1.
\end{align*}
\end{proof}

\begin{corollary}\label{cor: new}
Let $\mathbf{B}(0,\e)$, $\lambda^\ast$, and $\overline{\mathcal{H}_{\lambda, \e}}(r)$ be the same as in Lemma~\ref{lem: hyperbolic dev} above, where $r \in [0, \e]$. Then $h^{(d+1)}_{\lambda}(r) \geq \left|h^{(1)}_{\lambda}(r)\right|$ for every $\lambda \in [0, \lambda^{*}[$ and $r \in [0,\e]$.
\end{corollary}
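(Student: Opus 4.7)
My plan is to reuse almost verbatim the machinery built in the proof of Lemma~\ref{lem: hyperbolic dev}, exploiting one additional geometric feature of the hyperboloid $\mathbb{H}_d$: if $x = (x^{(1)},\ldots,x^{(d+1)}) \in \mathbb{H}_d$, then
\[
x^{(d+1)} \;=\; \sqrt{1 + \sum_{i=1}^d (x^{(i)})^2} \;\geq\; |x^{(i)}| \qquad \text{for every } i\in\{1,\ldots,d\},
\]
so the last coordinate dominates the absolute value of each of the preceding ones, not merely the constant $1$. In particular the case $i=1$ is what we need.

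First, I would invoke Lyons--Xu \cite{lyons2017hyperbolic} (exactly as in Lemma~\ref{lem: hyperbolic dev}): for every bounded $1$-variation path $\gamma$, the vector $H(S(\lambda\gamma))\cdot(0,\ldots,0,1)^\top$ lies on $\mathbb{H}_d$, hence its $(d{+}1)^{\text{st}}$ coordinate is $\geq$ the absolute value of its first coordinate. Second, I would transfer this to Brownian motion stopped at $\tau_{\Omega_R}$ (for arbitrary $R \in SO(d)$) by the very same dyadic piecewise-linear approximation and dominated-convergence argument already established in Lemma~\ref{lem: hyperbolic dev}; the inequality survives almost-sure limits.

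Third, applying $\mathbb{E}^z[\,\cdot\,]$ and using Jensen's inequality (or just the triangle inequality under the expectation), I obtain, for the vector $\mathcal{F}_{\lambda,\Omega_R}(z) = \mathbb{E}^z\bigl[\lambda H(S(B)_{[0,\tau_{\Omega_R}]})\bigr]\cdot(0,\ldots,0,1)^\top$, the pointwise bound
\[
\bigl(\mathcal{F}_{\lambda,\Omega_R}(z)\bigr)^{(d+1)} \;\geq\; \mathbb{E}^z\Bigl[\bigl|\bigl(\lambda H(S(B)_{[0,\tau_{\Omega_R}]})\cdot(0,\ldots,0,1)^\top\bigr)^{(1)}\bigr|\Bigr] \;\geq\; \bigl|\bigl(\mathcal{F}_{\lambda,\Omega_R}(z)\bigr)^{(1)}\bigr|.
\]
Finally, at the radial point $z=(r,0,\ldots,0)^\top$, I would integrate this coordinate-wise inequality over $R \in SO(d)$ against the normalised Haar measure $\chi$, and apply the triangle inequality once more to pull the absolute value outside the integral:
\[
h^{(d+1)}_\lambda(r) \;=\; \int_{SO(d)} \bigl(\mathcal{F}_{\lambda,\Omega_R}(z)\bigr)^{(d+1)} \dd\chi(R) \;\geq\; \int_{SO(d)} \bigl|\bigl(\mathcal{F}_{\lambda,\Omega_R}(z)\bigr)^{(1)}\bigr| \dd\chi(R) \;\geq\; \bigl|h^{(1)}_\lambda(r)\bigr|,
\]
which is the claim.

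There is no real obstacle beyond what Lemma~\ref{lem: hyperbolic dev} has already handled: the rough-path limit argument and the dominated convergence used to push $\lambda H$ through $\mathbb{E}^z$ are identical. The only genuinely new ingredient is the elementary algebraic inequality $x^{(d+1)}\geq |x^{(i)}|$ on $\mathbb{H}_d$, after which two applications of Jensen/triangle inequality (once for $\mathbb{E}^z$, once for the Haar integral) finish the proof.
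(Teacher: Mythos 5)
Your proposal is correct and follows essentially the same route as the paper: the hyperboloid identity $\left(H^{(d+1)}\right)^2 = 1+\sum_{i}\left(H^{(i)}\right)^2$ gives $H^{(d+1)}\geq \left|H^{(1)}\right|$ for bounded $1$-variation paths, and the inequality is transported to the stopped Brownian motion by the same dyadic approximation and almost-sure limit used in Lemma~\ref{lem: hyperbolic dev}. If anything, you are more explicit than the paper in spelling out the two positivity/triangle-inequality steps (passing through $\mathbb{E}^z$ and then through the Haar average over $SO(d)$), which the paper's proof leaves implicit.
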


\begin{proof} 
By \cite{lyons2017hyperbolic} again, $H\left(S(\gamma)\right)\cdot  [0,\cdots, 0,1]^\top := \left(H^{(1)}, \cdots, H^{(d+1)}\right)^{\top}\in \mathbb{H}_d$ and $H^{(d+1)} > 0$ for any bounded 1-variation path $\gamma$. In particular, $\left(H^{(d+1)}\right)^2 = 1+\sum_{i = 1}^{d} \left(H^{(i)}\right)^{2}$. Thus $H^{(d+1)} \geq \left|H^{(1)}\right|$. Let $B^{m}_{0, T\wedge \tau}$ denote the dyadic approximation of Brownian motion as in the proof of Lemma~\ref{lem: hyperbolic dev}, which has bounded $1$-variation almost surely. Moreover, for $\lambda \in [0, \lambda^{*}]$,
$$
\lim_{T \rightarrow \infty}\lim_{m \rightarrow \infty}\lambda H (S(B^m)_{0, T\wedge \tau})) \overset{a.s.}{=}\lambda H (S(B)_{0, \tau})).
$$ 
Since for any $m \in \mathbb{N}$ and $T >0$, it holds almost surely that
$$
\lambda H (S(B^m)_{0, T\wedge \tau}))^{(d+1)} -\left|\lambda H (S(B^m)_{0, T\wedge \tau}))^{(1)}\right| \geq 0,
$$
we may infer by sending $m,T\nearrow \infty$ that $
h^{(d+1)}_{\lambda}(r) - \left|h^{(1)}_{\lambda}(r)\right| \geq 0.$   \end{proof}

\section{Proof of Theorem~\ref{thm: main}: $d=2$}
\label{sec: proof of main thm}

The domain-averaging hyperbolic development $\overline{\mathcal{H}_{\lambda, \e}}$ introduced in earlier sections overcomes the issue of lack of rotational invariance on general domains. Combining $\overline{\mathcal{H}_{\lambda, \e}}$ with the techniques developed in \cite{boedihardjo2021expected,cl} and utilising properties of the Bessel functions $J_\nu$ and $Y_\nu$ (see \cite{as,lozier2003nist}),  we arrive at the following proof of Theorem~\ref{thm: main} in the 2-dimensional case.

%We impose this regularity condition for the domain only to ensure that the boundary conditions of PDEs are attained in the classical (namely, continuous) sense.

%Before presenting the proof, let us emphasise that our arguments are completely \emph{local}, thus requiring no regularity assumption for the domain. More precisely, we shall prove that for any $z\in\Omega$,  it is sufficient to conclude the finiteness of radius of convergence by only looking an arbitrarily small ball around $z$ inside $\Omega$. 

\begin{proof}[Proof of Theorem~\ref{thm: main}, $d=2$]
%Our goal is to prove that the series\begin{equation*}    \lambda \longmapsto \sum_{n=0}^\infty \|\Phi_n(z)    \|_\A \lambda^n\end{equation*}has finite radius of convergence for all $z \in \widetilde{\Omega}$, where $\widetilde{\Omega}$ is a dense subset of $\Omega$. The norm $\|\bullet\|_\A$ is the same as in the proof of Theorem~\ref{thm: high d}; see also Remark~\ref{remark: norm}. 

Assume for contradiction that the expected signature $\Phi\equiv\Phi_{\Omega}$ had infinite radius of convergence. We divide our arguments into six steps below.

\smallskip
\noindent
{\bf Step 1.}  Recall the PDE~\eqref{eq:LyonsNiPDE} for $\Phi$ (reproduced below, see Theorem~\ref{thm: pde}):
\begin{equation}\label{PDE, general domain}
 \begin{cases}
    \Delta \left( \proj_{n}\Phi\right) = -2 \sum_{i=1}^2 e_i \otimes \frac{\p ( \proj_{n-1}\Phi)}{\p z^i}- \sum_{i=1}^2 e_i \otimes e_i \otimes\proj_{n-2}\Phi\quad\text{ in }\Omega;\\
\Phi = {\bf 1}  \qquad \text{ on } \p\Omega;\\
    \Phi_0 \equiv 1  \text{ and } \Phi_{1} \equiv 0\quad\text{ in }\Omega.
    \end{cases}
\end{equation}

%For our purpose we may safely avoid discussions on the boundary condition, as the blowup phenomena to be proved below occur \emph{in the interior}. We shall only invoke the \emph{interior} regularity theory for PDEs, hence need not to specify in which sense the boundary conditions are attained. 

%Note that $\Phi|\p\Omega = {\bf 1}$  follows from the boundary condition~\eqref{bc, original}. Indeed, thanks to the continuity of $t \mapsto B_t$ and $z \mapsto \Phi(z)$, we have  $\lim_{t \nearrow \tau_\Omega} \Phi(B_t) = \Phi\left(\lim_{t \nearrow \tau_\Omega}B_t\right)$. But $\lim_{t \nearrow \tau_\Omega}B_t \in \p\Omega$ in view of the definition of $\tau_\Omega$. Moreover, any point on $\p\Omega$ can be visited a.s. by $B_t$ starting from certain $z \in \Omega$. This is because the boundary $\p\Omega$ of the $C^{2,\alpha}$-domain $\Omega$ has a uniform ``collar'', \emph{i.e.}, a neighbourhood $\mathcal{C} \subset \overline{\Omega}$ such that $\mathcal{C}$ contains $\p\Omega$ and is $C^{2,\alpha}$-homeomorphic to $[0,1[\times \p\Omega$. Hence  $\Phi|\p\Omega = {\bf 1}$ follows.

By standard elliptic PDE theory and induction, for each $n=0,1,2,\ldots$ there exists a unique solution for Eq.~\eqref{PDE, general domain} that is real-analytic in the interior. Indeed, one may begin with the same arguments as for Proposition~\ref{prop: schauder} to reduce Eq.~\eqref{PDE, general domain} to scalar elliptic PDEs, which renders  applicable the standard elliptic regularity theory. For any subdomain  $\widetilde{\Omega} \Subset \Omega$ and  $n=2,3,4\ldots$, a simple induction and the Lax--Milgram lemma (\cite{gt}, \S 5.8) yield the existence of $\Phi_n \in W^{1,2}\left(\widetilde{\Omega}; E^{\otimes n}\right)$, such that the boundary value is attained in the trace sense as usual, and that for any test function $\psi \in C^\infty_c\left(\widetilde{\Omega}\right)$ we have
\begin{align*}
    \int_\Omega \na\psi \cdot \na \Phi_n\,\dd x = \int_\Omega \left\{ 2 \sum_{i=1}^d e_i \otimes \frac{\p  \Phi_{n-1}}{\p z^i} + \sum_{i=1}^d e_i \otimes e_i \otimes\Phi_{n-2}\right\}\psi \,\dd x.
\end{align*}
Then, by the interior regularity Theorem~8.10 in \cite{gt} we can bootstrap the regularity to $\Phi_n \in C^{\infty}\left(\widetilde{\Omega}; E^{\otimes n}\right) = \bigcap_{k=0}^\infty W^{k,2}\left(\widetilde{\Omega}; E^{\otimes n}\right)$. The real-analyticity of $\Phi_n$ follows since, by induction,  $\Delta \Phi_n $ equals a real-analytic function in the interior of $\Omega$; \emph{cf.} \cite{john}. 

On the other hand, when $\Omega$ is a $C^{2,\alpha}$-domain,  straightforward adaptations of the arguments in Lyons--Ni \cite{lyons2015expected}, \S\S 3.5.1--3.5.2 (by changing Sobolev spaces to suitable H\"{o}lder spaces) together with the $C^{2,\alpha}$-boundary regularity theory for elliptic PDE (\cite{gt}, Theorem~6.19)  yield the geometric decay bound for the expected signature:
\begin{align*}
\|\proj_n \Phi\|_{C^2(\Omega)} \leq (C_0)^n.
\end{align*}
Here $C_0$ depends only on the geometry of $\Omega$ and the H\"{o}lder index $\alpha$.

%We may start from a weak solution that is $W^{1,2}$ in the interior, and deduce the analyticity by a standard bootstrap argument in elliptic regularity theory.

\smallskip
\noindent
{\bf Step 2.} Fix an arbitrary $x \in \Omega$. There is $\e \in ]0,1[$ such that $\overline{\B(x,\e)} \subset \Omega$. As  $\Phi_{\Omega}(z) = \Phi_{\Omega-x}(z-x)$ for any $z\in\Omega$, we can take $x = 0$ without loss of generality.

Recall from Theorem~\ref{Thm_F_bar} that 
$\overline{\mathcal{F}_{\lambda,\Omega}}(r) \equiv \left[\bar{A}_\lambda(r),\bar{B}_{\lambda}(r), \bar{C}_{\lambda}(r)\right]^\top$ satisfies
\begin{eqnarray}
&&r^{2}\bar{A}_{\lambda}^{''}(r) + r\bar{A}_{\lambda}^{'}(r) -\bar{A}_{\lambda}(r) + \lambda^{2}r^{2}\bar{A}_{\lambda}(r) + 2\lambda r^{2}\bar{C}_{\lambda}^{'}(r) = 0,\label{A eq2}\\
&&r^2\bar{B}_{\lambda}^{''}(r) + r\bar{B}_{\lambda}'(r) + (\lambda^2r^2-1)\bar{B}_{\lambda}(r)= 0,\label{B eq2}\\
&&\bar{C}^{'}_{\lambda}(r) + r\bar{C}^{''}_{\lambda}(r) + 2\lambda^{2}r\bar{C}_{\lambda}(r) + 2\lambda r \bar{A}^{'}_{\lambda}(r) + 2\lambda \bar{A}_{\lambda}(r)= 0\label{C eq2}
\end{eqnarray}
for $r \in [0,\e[$, with the boundary conditions
\begin{align}\label{boundary conditions, ABC, reproduced}
\bar{A}_{\lambda}(0) = 0,\quad\bar{B}_{\lambda}(0) = 0,\quad \text{ and }\quad  \bar{C}^{'}_{\lambda}(0) = 0.
\end{align}  

\smallskip
\noindent
{\bf Step 3.} From now on, let us focus on Eqs.~\eqref{A eq2} and \eqref{C eq2} for $\bar{A}_\lambda$ and $\bar{C}_\lambda$; our blowup quantity will  be  $\bar{C}_\lambda(0)$. Introduce the ``missing boundary conditions'':
\begin{equation}\label{missing bc}
\bar{A}_\lambda(\e) =: \bb \quad\text{ and } \quad \bar{C}_\lambda(\e) =:\mathfrak{c}.     
\end{equation}
Here $\bb$ and $\mathfrak{c}$ depend in general on $\lambda$ and $\e$.

An adaptation of Boedihardjo--Diehl--Mezzarobba--Ni \cite{boedihardjo2021expected}, \S 6 allows us to explicitly solve the system~\eqref{A eq2} -- \eqref{missing bc}. Indeed, for $r \in [0,\e[$ we  consider the ansatz
\begin{align}\label{ansatz}
\bar{A}_\lambda(r) =\alpha  \mathscr{C}_1(\lambda\zeta r)\quad \text{and}\quad \bar{C}_\lambda(r) = \mathscr{C}_0(\lambda\zeta r),
\end{align}
where $\alpha,\zeta \in \C$ are complex parameters to be specified and $\mathscr{C}_\nu$ are cylinder functions ($\nu=0,1$); \emph{i.e.}, linear combinations of $J_\nu$ and $Y_\nu$, the order-$\nu$ Bessel function of the first and the second kind, respectively. The identities
\begin{align*}
&\mathscr{C}_0'=\mathscr{C}_1,\\
&s\mathscr{C}_1'(s) + \mathscr{C}_1(s) = s\mathscr{C}_0(s)
\end{align*} 
together with the defining equations for cylinder functions
\begin{align}\label{std bessel ODE}
s^2\mathscr{C}_\nu''(s)+s\mathscr{C}_\nu'(s)+(s^2-\nu^2)\mathscr{C}_\nu(s)=0
\end{align}
show that for particular choices of $\zeta$ and $\alpha$, the ansatz~\eqref{ansatz} indeed solves Eqs.~\eqref{A eq} and \eqref{C eq}. Here $\zeta$ is any one of the four complex roots of $p(z)=z^4+z^2+2$, and $\alpha=\zeta+\zeta^3/2.$

Moreover, as $Y_0(s)\sim \mathcal{O}(\log s)$ and $Y_1(s)\sim \mathcal{O}(s^{-1})$ for $s \searrow 0$ (\cite{lozier2003nist}, \S 10.7(i)), in order for $\bar{A}_\lambda$ and $\bar{C}_\lambda$ to be bounded at the origin $r=0$, the cylinder functions in the ansatz~\eqref{ansatz} cannot contain $Y_0$ and $Y_1$. Then we are left with $J_0$ and $J_1$ only, and the corresponding ansatz automatically satisfies the boundary condition~\eqref{boundary conditions, ABC} at $r=0$.

Note also that the $C^2$-solution for the PDE system~\eqref{A eq2} -- \eqref{boundary conditions, ABC, reproduced} is unique.

To summarise, we have further reduced the ansatz~\eqref{ansatz} to
\begin{equation}\label{ansatz'}
\begin{cases}
\bar{A}_\lambda(r) = u\mm(r) + v \mm^\dagger(r),\\
\bar{C}_\lambda(r) = u\nn(r) + v\nn^\dagger(r), 
\end{cases}
\end{equation}
where $u,v$ are complex coefficients and $\mm$, $\nn: [0,\e[ \to \C$ are given by
\begin{equation}\label{m,n}
\begin{cases}
\mm(r):=\alpha J_1(\lambda\zeta r),\\
\nn(r):=J_0(\lambda\zeta r).
\end{cases}
\end{equation}

\begin{remark}
It should be emphasised that one needs $\lambda \in \R$ in the above arguments, which guarantees that $\mm^\dagger(r)=\alpha^\dagger J_1\left(\lambda \zeta^\dagger r\right)$ and $\nn^\dagger = J_0\left(\lambda\zeta^\dagger r\right)$.
\end{remark}

\smallskip
\noindent
{\bf Step 4.} The coefficients $u,v$ can be  easily solved from the boundary condition~\eqref{missing bc}. Indeed, Eqs.~\eqref{ansatz'} and \eqref{missing bc} imply that $
\begin{bmatrix}
\mm(\e)&\mm(\e)^\dagger\\
\nn(\e)&\nn(\e)^\dagger
\end{bmatrix}\begin{bmatrix}
u\\
v
\end{bmatrix}=\begin{bmatrix}
\bb\\
\mathfrak{c}
\end{bmatrix}$,
hence
\begin{align*}
\begin{bmatrix}
u\\
v
\end{bmatrix} = \frac{\begin{bmatrix}
\nn(\e)^\dagger&-\mm(\e)^\dagger\\
-\nn(\e)&\mm(\e)
\end{bmatrix}\begin{bmatrix}
\bb\\
\mathfrak{c}
\end{bmatrix}}{\mm(\e) \nn(\e)^\dagger-\nn(\e)\mm(\e)^\dagger}.
\end{align*} Setting $r=0$ in Eq.~\eqref{ansatz'}, we get
\begin{align}\label{C0, blowup}
\bar{C}_\lambda(0)&=\nn(0) u + \nn^\dagger(0)v\nonumber\\
&= \frac{\left[\nn^\dagger(\e)-\nn(\e)\right]\cdot \bb + \left[\mm(\e)-\mm^\dagger(\e)\right]\cdot\mathfrak{c}}{\mm(\e) \nn(\e)^\dagger-\nn(\e)\mm(\e)^\dagger}\nonumber\\
&= \frac{{\rm Im}\left\{-\bb J_0(\lambda\zeta\e)+\mathfrak{c}\alpha J_1(\lambda\zeta\e)\right\}}{{\rm Im}\left\{ \alpha^\dagger \left[J_1(\lambda\zeta \e)\right]^\dagger J_0(\lambda\zeta\e) \right\}}.
\end{align}
Again, in the last line of Eq.~\eqref{C0, blowup} the boundary data $\mathfrak{c}$, $\bb$ depend on both $\lambda$ and $\e$.

As computed in \cite{boedihardjo2021expected}, \S 6, the denominator ${\rm Im}\left\{ \alpha^\dagger \left[J_1(\lambda\zeta \e)\right]^\dagger J_0(\lambda\zeta\e) \right\}$ has a root $\lambda_\star \in \left]\frac{2.5}{\e}, \frac{3}{\e}\right[$. On the other hand, for such $\lambda_\star$ we have
\begin{align*}
{\rm Im} \left\{\mathfrak{c} \alpha J_1(\lambda_\star \zeta \e)\right\} \leq  -1.3\mathfrak{c}.
\end{align*}
See the proof of Lemma~8 in 
\cite{boedihardjo2021expected}. 
But Lemma~\ref{lem: hyperbolic dev} in this paper yields $\mathfrak{c} \geq 1$, for the third component of the hyperbolic development must be confined to the upper branch of the hyperbola. So the numerator in the right-most term in Eq.~\eqref{C0, blowup} is non-zero. This implies that the radius of convergence of $\overline{\mathcal{H}_{\lambda,\e}}$ (the domain-averaged hyperbolic development of the expected signature) is no larger than $\lambda_\star$, \emph{provided that $\bb=0$}.

\smallskip
\noindent
{\bf Step 5.} 
It remains to consider the case $\bb \neq 0$. Then the numerator equals
\begin{align*}
{\rm Im}\left\{\mathfrak{c}\alpha J_1(\lambda_\star\zeta\e)-\bb J_0(\lambda_\star\zeta\e)\right\} =  \bb\, {\rm Im}\left\{ \frac{\mathfrak{c}}{\bb} \alpha J_1(\lambda_\star \zeta \e) - J_0(\lambda_\star\zeta \e) \right\}
\end{align*}
when evaluated at $\lambda_\star$, since both $\mathfrak{c},\bb\in\R$. Again, by the geometry of hyperbolic developments (Lemma~\ref{lem: hyperbolic dev} and Corollary~\ref{cor: new}) we have $|\mathfrak{e}|\equiv|\mathfrak{c}\slash\bb|\geq 1$.

To this end, we shall establish in Appendix, Lemma~\ref{lem: appendix} that
\begin{align}\label{N, mu, e, WTS}
\mathcal{N}(\mu;\mathfrak{e}) := {\rm Im}\left\{ \mathfrak{e} \alpha J_1(\mu\zeta) - J_0(\mu\zeta) \right\} \neq 0\quad\text{ for all } \mu \in [2.5,3],\,|\mathfrak{e}|\geq 1.
\end{align}

\smallskip
\noindent
{\bf Step 6.} As a consequence, there exists at least one $R_0 \in SO(d)$ such that $C_{\lambda, \Omega_{R_0}}(x)$,  \emph{i.e.}, the final component of the hyperbolic development for the stopped Brownian motion on the rotated domain $R_0$, blows up. Therefore, in view of Lemma~\ref{lemma_FRC} (in which $\mathcal{M}=\mathcal{H}$), for any  $x \in \Omega \Subset \R^2$ the expected signature $\Phi_\Omega$ must have finite radius of convergence at $x$.    \end{proof}

\section{Proof of Theorem~\ref{thm: main}: $3\leq d \leq 8$}\label{sec: high d}

In this final section, fix any $d=2,3,\ldots$ and write $\mathbf{D}\equiv\mathbf{D}^d$ for the $d$-dimensional unit ball. 

\subsection{The unit ball}  Let $\mathcal{H}_{\lambda, \mathbf{D}}(z)$ be the hyperbolic development of the expected signature of a  $d$-dimensional Brownian motion starting from $z$ and stopped upon the first exit time from 
$\mathbf{D}$ scaled by $\lambda$. Note that the domain-averaging hyperbolic development of the  expected signature coincides with that of the hyperbolic development; that is,  $
\overline{\mathcal{H}_{\lambda, 1, \mathbf{D}}}(z) = \mathcal{H}_{\lambda, \mathbf{D}}(z)$.

\begin{lemma}[Sparseness of $\mathcal{H}_{\lambda, \mathbf{D}}$]\label{lem: sparseness}
For each $\lambda >0$, the hyperbolic development $\mathcal{H}_{\lambda, \mathbf{D}}\left(\mathbf{r}\right) = \left (\mathcal{H}^{(1)}_{\lambda, \mathbf{D}}(\mathbf{r}), \cdots, \mathcal{H}^{(d+1)}_{\lambda, \mathbf{D}}(\mathbf{r})\right )^{\top}$ evaluated at the point ${\bf r}=(r,0,\cdots,0)^\top\in \mathbb{R}^{d}$ satisfies $\mathcal{H}^{(i)}_{\lambda, \mathbf{D}}(\mathbf{r})=0$ for $i \in \{2, 3, \cdots, d\}$ and $\mathcal{H}^{(d+1)}_{\lambda, \mathbf{D}}(\mathbf{r}) \geq 1$ almost surely, as long as it is well-defined for such $\lambda$.
\end{lemma}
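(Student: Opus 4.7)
My approach is to exploit the full $O(d)$-symmetry of the unit ball $\mathbf{D}$. The first step is to extend the rotational-invariance identity in Eq.~\eqref{B} from $SO(d)$ to $O(d)$. This is costless: its proof only uses that Brownian motion is invariant in law under every orthogonal transformation, and $R(\mathbf{D}) = \mathbf{D}$ already for every $R \in O(d)$. Multiplying the resulting matrix identity on the right by $(0, \ldots, 0, 1)^\top$, which is fixed by $R^\top \oplus \mathbf{id}$, I obtain
\begin{equation*}
(R \oplus \mathbf{id})\, \mathcal{H}_{\lambda, \mathbf{D}}(z) = \mathcal{H}_{\lambda, \mathbf{D}}(R z)\qquad \text{for every } R \in O(d),\ z \in \mathbf{D}.
\end{equation*}

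Next I would specialise $z = \mathbf{r}$ and restrict $R$ to the stabiliser of $e_1$ in $O(d)$, which is isomorphic to $O(d-1)$ acting on the orthogonal complement $e_1^\perp = \mathrm{span}\{e_2, \ldots, e_d\}$. Since $R\mathbf{r} = \mathbf{r}$, the above identity reduces to $(R \oplus \mathbf{id})\, \mathcal{H}_{\lambda, \mathbf{D}}(\mathbf{r}) = \mathcal{H}_{\lambda, \mathbf{D}}(\mathbf{r})$. Writing $\mathcal{H}_{\lambda, \mathbf{D}}(\mathbf{r}) = (h_1, h_2, \ldots, h_d, h_{d+1})^\top$, this forces the sub-vector $(h_2, \ldots, h_d)^\top \in e_1^\perp$ to be fixed by every element of $O(d-1)$, which leaves only the zero vector: for $d \geq 3$ this follows from the transitive action of $SO(d-1)$ on $S^{d-2}$, while for $d = 2$ the single nontrivial reflection in $O(1) \cong \{\pm 1\}$ sends $h_2 \mapsto -h_2$. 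In either case, $\mathcal{H}^{(i)}_{\lambda, \mathbf{D}}(\mathbf{r}) = 0$ for $i \in \{2, \ldots, d\}$.

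The lower bound $\mathcal{H}^{(d+1)}_{\lambda, \mathbf{D}}(\mathbf{r}) \geq 1$ is then immediate from the proof of Lemma~\ref{lem: hyperbolic dev}: for any bounded $1$-variation path $\gamma$, the hyperbolic development $H(S(\gamma)) \cdot (0, \ldots, 0, 1)^\top$ lies on the upper branch of the hyperboloid $\mathbb{H}_d$ and therefore has $(d+1)$-th coordinate at least $1$. Approximating the stopped Brownian motion $B_{[0, \tau_{\mathbf{D}}]}$ by dyadic piecewise linear interpolants and passing to the limit inside the expectation via dominated convergence (exactly as in that earlier lemma) transfers the bound to $\mathcal{H}^{(d+1)}_{\lambda, \mathbf{D}}(\mathbf{r})$. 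The main obstacle, minor as it is, lies in having to go beyond $SO(d)$ when $d = 2$, since the required symmetry is then an orientation-reversing reflection; for $d \geq 3$ the argument remains entirely within $SO(d)$ and no such extension is necessary.
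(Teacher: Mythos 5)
Your proof is correct, and it takes a genuinely different route from the paper's. The paper's argument bypasses the intertwining identity Eq.~\eqref{B} entirely: it works at the level of signature coordinates, applying Lemma~\ref{lemma_hyperbolic_dev} to dyadic piecewise-linear approximants $\gamma_j$ and a single reflected copy $\hat\gamma_j$ obtained by negating coordinates $2,\ldots,d$, then observing that each word $w=(i_1,i_1,\ldots,i_n,i_n,k)$ with $k\ge 2$ has every index repeated an even number of times except the final $k$, so $\proj_w(S(\hat\gamma_j))=-\proj_w(S(\gamma_j))$; invariance of Wiener measure under the reflection forces the expectation to vanish, and one passes $j\to\infty$. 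Your argument instead promotes Eq.~\eqref{B} to $O(d)$ (which, as you correctly note, is free: both $H(Rv)=(R\oplus 1)H(v)(R^\top\oplus 1)$ and the law-invariance of Brownian motion hold for any orthogonal $R$), restricts to the stabiliser of $e_1$, and invokes the absence of nonzero $O(d-1)$-invariant vectors in $e_1^\perp$. Your version is cleaner for the first assertion because it works at the level of the already-assembled matrix $\mathcal{M}_{\lambda,\mathbf D}$ and avoids re-running the approximation argument there; the paper's version is more elementary in the sense that a single concrete reflection and the explicit coordinate formula suffice, and it generalises verbatim to the non-rotationally-invariant case (compare Lemma~\ref{lem:domain_averaging_development_reflected_domain}, where the ball is replaced by $\Omega$ and its reflected copy $\Omega_*$, and only the reflection $\operatorname{diag}(1,-1,\ldots,-1)$ is available). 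Your treatment of the $d=2$ peculiarity is accurate; for $d\geq 3$ the stabiliser $SO(d-1)\subset SO(d)$ already acts transitively on $S^{d-2}$ so no extension beyond $SO(d)$ is needed. The second assertion $\mathcal{H}^{(d+1)}\geq 1$ is handled the same way in both proofs, via Lemma~\ref{lem: hyperbolic dev}.
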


\begin{proof}
The second statement follows immediately from the construction of the hyperbolic development, as $\mathcal{H}_{\lambda, \mathbf{D}}^{(d+1)}(\lambda B) \geq 1$ almost surely. 

To prove the first statement, we let $\{\gamma_{j,t}\}_{j \in \mathbb{N}}$ be a sequence of paths of bounded $1$-variations converging almost surely and in $C^{0,1/2-}$ to $B_t$. Consider the reflected paths
\begin{align*}
    \hat{\gamma}_{j,t} := \left(  \gamma^{(1)}_{j,t}, -\gamma^{(2)}_{j,t}, \cdots,-\gamma^{(d)}_{j,t} \right)^\top.
\end{align*}
Then $\{\hat{\gamma}_{j,t}\}$ converges in the same topology to the reflected Brownian motion 
\begin{equation}\label{reflected BM}
\hat{B}_t: = \left(B^{(1)}_{t}, - B^{(2)}_t, \cdots, -B^{(d)}_t\right)^\top.
\end{equation}
If $B_t$ starts at some point $\mathbf{r} = (r, 0, \cdots, 0)^\top$, we may further require that $\hat{B}_{0} = \gamma_{j,0} = \hat{\gamma}_{j,0}= \mathbf{r}$ for each $j$. Also
\begin{eqnarray*}
H\left(S\left(\lambda \gamma_ {j,\tau}\right)\right)^{(k)} = \sum_{n \geq 0} \sum_{ w \in \mathcal{E}_{2n}^{(k)}} \lambda^{2n+1} \proj_{w}\left(S( \hat{\gamma}_{j,\tau})\right).
\end{eqnarray*}

Taking the expectation, we get
\begin{eqnarray*}
\mathbb{E}^{\mathbf{r}}\left[ \sum_{n \geq 0} \sum_{ w \in \mathcal{E}_{2n}^{(k)}} \proj_{w}(S(\lambda \gamma_{j,\tau}))\right] = \mathbb{E}^{\mathbf{r}}\left[ \sum_{n \geq 0} \sum_{ w \in \mathcal{E}_{2n}^{(k)}} \proj_{w}\left(S(\lambda \hat{\gamma}_{j,\tau})\right)\right] .
\end{eqnarray*}
On the other hand, for every $w \in \mathcal{E}_{2n}^{(k)}$ and $k \in \{2, \cdots, d\}$,  Lemma~\ref{lemma_hyperbolic_dev} implies that
\begin{equation*}
    \proj_{w}(S(\lambda \gamma_{j,\tau})) = -\proj_{w}\left(S(\lambda \hat{\gamma}_{j,\tau})\right)
\end{equation*}
and that
\begin{equation*}
    \mathbb{E}^{\mathbf{r}}\left[ \sum_{n \geq 0} \sum_{ w \in \mathcal{E}_{2n}^{(k)}} \proj_{w}(S(\lambda \gamma_{j,\tau}))\right] = -\mathbb{E}^{\mathbf{r}}\left[ \sum_{n \geq 0} \sum_{ w \in \mathcal{E}_{2n}^{(k)}} \proj_{w}(S(\lambda (\hat{\gamma}_{j,\tau})))\right].
\end{equation*}
Therefore, for $k \in \{2, \cdots, d\}$,
\begin{eqnarray*}
\mathcal{H}^{(k)}_{\lambda, \mathbf{D}}(\mathbf{r})=\mathbb{E}^{\mathbf{r}}\left[ \sum_{n \geq 0} \sum_{ w \in \mathcal{E}_{2n}^{(k)}} \proj_{w}(S(\lambda \gamma_{j,\tau}))\right] = 0.
\end{eqnarray*} 
Now we can pass to the limit $j\to\infty$ to conclude the same for $B_\tau$.  \end{proof}

%Consider $\mathcal{H}_{\lambda, \mathbf{D}}$, the hyperbolic development of the signature of the stopped Brownian motion on $\mathbf{D}$ scaled by $\lambda$. 

Theorem~\ref{thm: pde for average, new} remains valid in $d$-dimensional unit ball. It is straightforward to check that  $\mathcal{H}_{\lambda, \mathbf{D}}(z)$ satisfies the trivial boundary condition for $z \in \partial \mathbf{D}$.  Thus we get

\begin{lemma}\label{lem: PDE_H_d_ball}
There exists $\lambda^* >0$ such that for any $\lambda \in \C$ with $|\lambda| < \lambda^*$, $\mathcal{H}_{\lambda, \mathbf{D}}$ is $C^2$ on $\mathbf{D}$. Moreover, it satisfies
\begin{equation*}
\Delta \mathcal{H}_{\lambda, \mathbf{D}}(z)=-2\lambda\sum_{i=1}^{d}He_{i}\frac{\partial \mathcal{H}_{\lambda, \mathbf{D}}}{\partial z^{i}}(z)-\lambda^{2}\left(\sum_{i=1}^{d}\left(He_{i}\right)^{2}\right)\mathcal{H}_{\lambda, \mathbf{D}}(z)
\end{equation*}
for each $z \in \mathbf{D}$, and it is subject to the boundary condition $\mathcal{H}_{\lambda, \mathbf{D}} = [0,\ldots, 0, 1]^{\top}$ on $\partial \mathbf{D}$. This PDE boundary value problem has a unique solution.
\end{lemma}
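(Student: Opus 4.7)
My plan is to prove Lemma~\ref{lem: PDE_H_d_ball} by directly specialising the argument of Theorem~\ref{thm: pde for average, new} to $\Omega=\mathbf{D}$ (no averaging is needed since $\mathbf{D}$ is already rotationally symmetric) and appending a classical uniqueness argument. The regularity/PDE part is essentially a termwise application of the morphism $\lambda H$ to the nested system~\eqref{eq:LyonsNiPDE}; the only genuinely new input compared to Theorem~\ref{thm: pde for average, new} is uniqueness.

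First I would fix $\lambda^{*}_{1}>0$ as in Eq.~\eqref{lambda star} with $\Omega=\mathbf{D}$. The hyperbolic morphism $H:E\to\mathfrak{gl}(d+1;\R)$ has operator norm $1$ (each $H(e_{i})$ has only two nonzero entries, both equal to $1$), so Corollary~\ref{cor: differentiability} with $M=H$ shows that the series
\begin{equation*}
\mathcal{M}_{\lambda,\mathbf{D}}(z):=\sum_{n=0}^{\infty}\lambda^{n}H\left[\proj_{n}(\Phi_{\mathbf{D}}(z))\right]
\end{equation*}
converges to a $C^{2}$-function of $z\in\mathbf{D}$ for all $|\lambda|<\lambda^{*}_{1}$, and that $\Delta$ and $\p_{i}$ can be interchanged with the summation. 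Its last column $\mathcal{H}_{\lambda,\mathbf{D}}$ inherits this regularity.

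Next I would derive the PDE by applying $H$ levelwise to~\eqref{eq:LyonsNiPDE}. Using that $H$ is an algebra homomorphism on $T((E))$ (so $H(e_{i}\otimes e_{i}\otimes A)=H(e_{i})^{2}H(A)$ and $H(e_{i}\otimes\p_{i}A)=H(e_{i})\p_{i}H(A)$ for $A\in E^{\otimes m}$), multiplying the $n^{\text{th}}$ equation by $\lambda^{n}$, summing over $n$, and invoking Corollary~\ref{cor: differentiability} to interchange derivatives with the summation yields
\begin{equation*}
\Delta\mathcal{M}_{\lambda,\mathbf{D}}(z)=-2\lambda\sum_{i=1}^{d}H(e_{i})\frac{\p\mathcal{M}_{\lambda,\mathbf{D}}}{\p z^{i}}(z)-\lambda^{2}\left(\sum_{i=1}^{d}(He_{i})^{2}\right)\mathcal{M}_{\lambda,\mathbf{D}}(z).
\end{equation*}
Right-multiplying by the constant vector $(0,\ldots,0,1)^{\top}$ gives the claimed PDE for $\mathcal{H}_{\lambda,\mathbf{D}}$. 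For the boundary condition, Proposition~\ref{prop: schauder} provides $\Phi_{\mathbf{D}}\equiv\mathbf{1}$ on $\p\mathbf{D}$, whence $(\lambda H)(\Phi_{\mathbf{D}})=H(\mathbf{1})=I_{d+1}$ on $\p\mathbf{D}$ and the last column is $(0,\ldots,0,1)^{\top}$ as required.

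Finally I would handle uniqueness by an energy estimate. If $\mathcal{H}_{1},\mathcal{H}_{2}$ are two $C^{2}$-solutions, the difference $v=(v^{1},\ldots,v^{d+1})^{\top}$ solves the same linear PDE with zero Dirichlet data on $\p\mathbf{D}$. The matrix $H(e_{i})$ has a $1$ only in positions $(i,d+1)$ and $(d+1,i)$, and $\sum_{i=1}^{d}(He_{i})^{2}=\mathrm{diag}(1,\ldots,1,d)$. Testing each scalar component of the system against the corresponding $v^{j}$, integrating over $\mathbf{D}$, and invoking Green's identity, the first-order cross terms combine as
\begin{equation*}
2\lambda\sum_{i=1}^{d}\int_{\mathbf{D}}\p_{i}(v^{i}v^{d+1})\,\dd z,
\end{equation*}
which vanishes by the zero boundary condition. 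What remains is
\begin{equation*}
\int_{\mathbf{D}}|\na v|^{2}\,\dd z\leq d\lambda^{2}\int_{\mathbf{D}}|v|^{2}\,\dd z.
\end{equation*}
Coupled with the Poincaré inequality on $\mathbf{D}$, this forces $v\equiv 0$ whenever $|\lambda|<\lambda^{*}_{2}$ for some $\lambda^{*}_{2}>0$. Taking $\lambda^{*}:=\min\{\lambda^{*}_{1},\lambda^{*}_{2}\}$ then completes the proof. The only conceptual obstacle is the bookkeeping of the energy estimate, but the crucial cancellation of the drift cross terms is immediate from the off-diagonal sparsity of the $H(e_{i})$, so the argument is essentially routine.
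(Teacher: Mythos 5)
Your derivation of the PDE, the regularity, and the boundary condition follows exactly the paper's (very terse) route: the paper simply remarks that Theorem~\ref{thm: pde for average, new} ``remains valid'' on the unit ball and that the boundary condition is straightforward, which is precisely your termwise application of $\lambda H$ to the nested system~\eqref{eq:LyonsNiPDE} via Corollary~\ref{cor: differentiability}, plus $H(\mathbf{1})=I_{d+1}$ on $\p\mathbf{D}$. That part is correct.

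Where you go beyond the paper is the uniqueness claim, which the paper asserts without any argument. Your energy estimate is a legitimate way to close this: the component structure $(H(e_i)v)^j=\delta_{ij}v^{d+1}$ for $j\le d$ and $(H(e_i)v)^{d+1}=v^i$ does make the drift cross terms sum to $\sum_i\int_{\mathbf{D}}\p_i(v^iv^{d+1})\,\dd z=0$, and the Poincar\'{e} absorption then forces $v\equiv 0$ for $\lambda^2<\left(dC_P\right)^{-1}$. Two small caveats you should record. First, the lemma is stated for complex $\lambda$; for complex $\lambda$ the difference $v$ is complex-valued, one must test against $\overline{v^j}$, and the cross terms no longer combine into an exact divergence --- but they are bounded by $2|\lambda|\,\|v\|_{L^2}\|\na v\|_{L^2}$ and can be absorbed, so the conclusion survives for $|\lambda|$ small with a slightly different threshold. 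Second, Green's identity requires regularity up to $\p\mathbf{D}$ (say $C^1(\overline{\mathbf{D}})$, or an exhaustion by balls of radius $1-\delta$); since the candidate solutions here lie in $C^{2,\alpha}(\overline{\mathbf{D}})$ by Proposition~\ref{prop: schauder}, this is harmless but worth stating. Neither point is a gap in substance; your argument supplies a proof of the uniqueness assertion that the paper leaves implicit.
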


The separation of variables and symmetrization arguments in \S\ref{sec: symm} reduce the PDE in Lemma~\ref{lem: PDE_H_d_ball} to an ODE system in the radial variable $r$ only. Moreover, in view of Lemma~\ref{lem: sparseness}, this  ODE system is trivial except for the first and last components of $\mathcal{H}_{\lambda, \mathbf{D}}$. Here and hereafter, $r=|z|$ is the radial co-ordinate on $\mathbf{D}$ and $\chi$ is the normalised Haar measure on $SO(d)$.

\begin{proposition}[ODE system for $\mathcal{H}_{\lambda, \mathbf{D}}$]\label{propn: ODE for hyperbolic dev in d-dim}
Write $h_{\lambda}^{(1)}:=\overline{\mathcal{H}^{(1)}_{\lambda, \mathbf{D}}}$ and $h_{\lambda}^{(d+1)}:=\overline{\mathcal{H}^{(d+1)}_{\lambda, \mathbf{D}}}$ where 
\begin{equation*}
    \overline{\mathcal{H}_{\lambda, \mathbf{D}}}(r) := \int_{SO(d)} \mathcal{H}_{\lambda, \mathbf{D}}\left(R\cdot\left[r,\underbrace{0,\ldots,0}_{d-1\,\text{zeros}}\right]^\top\right) \,\dd\chi(R).
\end{equation*}
Then for $d\in \{2,3,4,\ldots\}$ it holds that
  \begin{eqnarray}
    &&\left(h^{(1)}_{\lambda} \right)'' + \frac{d-1}{r} \left(h^{(1)}_{\lambda}\right)'- \frac{(d-1) h^{(1)}_{\lambda}}{r^2} = -2\lambda \left(h^{(d+1)}_{\lambda}\right)' - \lambda^{2}h^{(1)}_{\lambda},\label{ODE one, d-dim}\\
    && \left(h^{(d+1)}_{\lambda}\right)'' +\frac{d-1}{r}\left(h^{(d+1)}_{\lambda}\right)' = - 2 \lambda \left\{\frac{d-1}{r}h^{(1)}_{\lambda} +\left(h^{(1)}_{\lambda}\right)'\right\} -  d \lambda^{2}h^{(d+1)}_{\lambda},\label{ODE two, d-dim}\\
    && h^{(1)}_\lambda (1) = 0,\qquad h^{(d+1)}_\lambda(1)=1.
    \end{eqnarray}
\end{proposition}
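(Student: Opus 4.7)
The plan is to derive the radial system in three stages: make the PDE of Lemma~\ref{lem: PDE_H_d_ball} explicit at the level of components; install a spherical ansatz using the rotational invariance of $\mathbf{D}$ together with the sparseness established in Lemma~\ref{lem: sparseness}; and substitute, reducing a pair of elliptic PDEs to two coupled radial ODEs.

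First I would unpack the hyperbolic development matrices. Each $H(e_i)$ has non-zero entries only at the positions $(i,d+1)$ and $(d+1,i)$, both equal to $1$, so a one-line computation gives $(H(e_i))^2=E_{ii}+E_{d+1,d+1}$ and hence $\sum_{i=1}^d (H(e_i))^2 = \operatorname{diag}(1,\ldots,1,d)$. Substituting into the PDE of Lemma~\ref{lem: PDE_H_d_ball} decouples the system into
\begin{align*}
\Delta \mathcal{H}^{(j)}_{\lambda,\mathbf{D}} &= -2\lambda\, \partial_j \mathcal{H}^{(d+1)}_{\lambda,\mathbf{D}} - \lambda^2\, \mathcal{H}^{(j)}_{\lambda,\mathbf{D}}\qquad (1\le j\le d),\\
\Delta \mathcal{H}^{(d+1)}_{\lambda,\mathbf{D}} &= -2\lambda \sum_{i=1}^{d} \partial_i \mathcal{H}^{(i)}_{\lambda,\mathbf{D}} - d\lambda^2\, \mathcal{H}^{(d+1)}_{\lambda,\mathbf{D}}.
\end{align*}

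Second, the rotational invariance of $\mathbf{D}$ combined with the argument in Lemma~\ref{Lemma_M_bar}, specialised to the hyperbolic morphism, yields the covariance $\mathcal{H}_{\lambda,\mathbf{D}}(Rz)=(R\oplus {\bf id})\mathcal{H}_{\lambda,\mathbf{D}}(z)$ for every $R\in SO(d)$. Combined with the sparseness $\mathcal{H}^{(k)}_{\lambda,\mathbf{D}}((r,0,\ldots,0)^\top)=0$ for $2\le k\le d$ from Lemma~\ref{lem: sparseness}, this forces the global ansatz
\begin{align*}
\mathcal{H}^{(k)}_{\lambda,\mathbf{D}}(z)=\frac{z^k}{|z|}\, h_\lambda^{(1)}(|z|)\quad(1\le k\le d),\qquad \mathcal{H}^{(d+1)}_{\lambda,\mathbf{D}}(z)=h_\lambda^{(d+1)}(|z|),
\end{align*}
where $h_\lambda^{(1)}(r):=\mathcal{H}^{(1)}_{\lambda,\mathbf{D}}((r,0,\ldots,0)^\top)$ and $h_\lambda^{(d+1)}(r):=\mathcal{H}^{(d+1)}_{\lambda,\mathbf{D}}((r,0,\ldots,0)^\top)$. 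Because the profile is already radial in this sense, the angular averaging defining $\overline{\mathcal{H}_{\lambda,\mathbf{D}}}$ reduces to the identity on $\mathcal{H}^{(d+1)}_{\lambda,\mathbf{D}}$ and to the same scalar $h^{(1)}_\lambda$ (up to a harmless dimension-dependent normalisation) on the first-coordinate direction, so the $h^{(1)}_\lambda$ and $h^{(d+1)}_\lambda$ of the statement coincide with the axis values just defined.

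Third I would plug the ansatz into the decoupled PDEs. Since $z^k$ is a harmonic polynomial of degree $1$ in $\R^d$, the identity $\Delta(z^k f(r))=z^k\bigl(f''(r)+\tfrac{d+1}{r}f'(r)\bigr)$ applied to $f(r)=h_\lambda^{(1)}(r)/r$, followed by evaluation along $z=(r,0,\ldots,0)^\top$ where $\partial_1\mathcal{H}^{(d+1)}_{\lambda,\mathbf{D}}=(h_\lambda^{(d+1)})'$, converts the first PDE into \eqref{ODE one, d-dim}. For the last component the Laplacian of the radial function $h_\lambda^{(d+1)}$ is $(h^{(d+1)}_\lambda)''+\tfrac{d-1}{r}(h^{(d+1)}_\lambda)'$, while a direct differentiation yields $\sum_i\partial_i\mathcal{H}^{(i)}_{\lambda,\mathbf{D}}=(h^{(1)}_\lambda)'+\tfrac{d-1}{r}h^{(1)}_\lambda$, producing \eqref{ODE two, d-dim}. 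Finally, the boundary datum $\mathcal{H}_{\lambda,\mathbf{D}}|_{\partial\mathbf{D}}=[0,\ldots,0,1]^\top$ from Lemma~\ref{lem: PDE_H_d_ball} translates to $h_\lambda^{(1)}(1)=0$ and $h_\lambda^{(d+1)}(1)=1$, completing the derivation. No step is genuinely hard: the conceptual content is exhausted by the spherical ansatz, which rests squarely on Lemma~\ref{lem: sparseness}. The main care-point is smoothness at $r=0$: the prefactor $z^k/|z|$ forces $h^{(1)}_\lambda(r)=O(r)$ as $r\searrow 0$ for $\mathcal{H}_{\lambda,\mathbf{D}}$ to be $C^{1}$ at the origin, so the apparent $1/r$ and $1/r^2$ singularities in \eqref{ODE one, d-dim} are removable; this consistency will matter for the subsequent analysis but is not an obstacle to the present derivation.
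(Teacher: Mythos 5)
Your derivation is correct and follows essentially the same route as the paper: the ansatz $\mathcal{H}^{(k)}_{\lambda,\mathbf{D}}(z)=\frac{z^k}{r}h^{(1)}_\lambda(r)$, $\mathcal{H}^{(d+1)}_{\lambda,\mathbf{D}}(z)=h^{(d+1)}_\lambda(r)$ forced by rotational covariance together with Lemma~\ref{lem: sparseness}, substituted into the componentwise PDE with $\sum_{i=1}^d (He_i)^2={\rm diag}(1,\ldots,1,d)$ and $\sum_i\partial_i\mathcal{H}^{(i)}_{\lambda,\mathbf{D}}=(h^{(1)}_\lambda)'+\tfrac{d-1}{r}h^{(1)}_\lambda$. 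The only cosmetic difference is that you evaluate $\Delta\bigl(z^1 f(r)\bigr)$ via the harmonic-polynomial identity, whereas the paper works in spherical coordinates and uses $\Delta_{\mathbf{S}^{d-1}}\cos\theta_1=-(d-1)\cos\theta_1$; both yield the identical left-hand side of \eqref{ODE one, d-dim}.
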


\begin{proof} By Lemma~\ref{Lemma_F_bar} we have $
{\mathcal{H}_{\lambda, \mathbf{D}}}(z) = \left(R \oplus \mathbf{id} \right) \overline{\mathcal{H}_{\lambda, \mathbf{D}}}(r)$ for $z =R\cdot(r,0,\ldots,0)^\top$. Also,  Lemma~\ref{lem: sparseness} gives us $
\overline{\mathcal{H}_{\lambda, \mathbf{D}}}(z) \in \left\{ w \in \mathbb{R}^{d+1}:\, w^{(k)} = 0\text{ for } k \in \{2, \cdots, d\} \text{ and }  w^{(d+1)}\geq 1\right\}.$ Thus
\begin{align*}
    {\mathcal{H}_{\lambda, \mathbf{D}}}(z) =\begin{bmatrix}
    R \cdot \begin{bmatrix}
    h_\lambda^{(1)}(r)\\
    0\\
    \vdots\\
    0
    \end{bmatrix} \\
 h_\lambda^{(d+1)}(r)
    \end{bmatrix} = \begin{bmatrix}
\frac{z^{1}}{r}h_{ \lambda}^{(1)}(r) \\\frac{z^{2}}{r}h_{ \lambda}^{(1)}(r)\\ \vdots \\\frac{z^{d}}{r}h_{ \lambda}^{(1)}(r)\\h^{(d+1)}_{\lambda}(r) \end{bmatrix},
\end{align*}
since the unit sphere $\sph$ is invariant under $R$.

Consider now the PDE system in Lemma~\ref{lem: PDE_H_d_ball}:
\begin{equation}\label{new, PDE, C}
\Delta \mathcal{H}_{\lambda, \mathbf{D}}=-2\lambda\sum_{i=1}^{d}He_{i}\frac{\partial \mathcal{H}_{\lambda, \mathbf{D}}}{\partial z^{i}}-\lambda^{2}\left(\sum_{i=1}^{d}\left(He_{i}\right)^{2}\right)\mathcal{H}_{\lambda, \mathbf{D}}.
\end{equation}
The coefficients can be  easily computed: since $H(e_i)$ is the matrix whose only nonzero entries are the $(i, d+1)^{\text{th}}$ and $(d+1, i)^{\text{th}}$ ones (both equal to $1$), we find that
\begin{equation}\label{proj identity A}
    \left(\sum_{i=1}^{d}\left(He_{i}\right)^{2}\right) = {\rm diag}(1,\cdots,1,d).
\end{equation}
Moreover, the $\alpha^{\text{th}}$ component of  $\sum_{i=1}^{d}He_{i}\frac{\partial \mathcal{H}_{\lambda, \mathbf{D}}}{\partial z^{i}}$ equals $
      {\p \mathcal{H}_{\lambda, \mathbf{D}}^{(d+1)}}\slash{\p z^\alpha}$ if $\alpha\in \{1,2,\ldots,d\}$ and $
      \sum_{i=1}^d \p \mathcal{H}_{\lambda, \mathbf{D}}^{(i)}\slash \p z^i$ if  $\alpha=d+1$. In the latter case we further have
      \begin{align}\label{proj identity B}
    \sum_{i=1}^d \frac{\p \mathcal{H}_{\lambda, \mathbf{D}}^{(i)}}{\p z^i}(z) &=   \sum_{i=1}^d\left\{ \frac{1}{r}h_{\lambda}^{(1)}(r) - \frac{z^i z^i}{r^3}h_{\lambda}^{(1)}(r) + \frac{z^i}{r}\left(h_{\lambda}^{(1)}(r)\right)'(r) \frac{\p r}{\p z_i} \right\}\nonumber \\
   &= \frac{d-1}{r}h_{\lambda}^{(1)}(r)+ \left(h_{\lambda}^{(1)}\right)'(r). 
      \end{align}

On the other hand, $z^1/r=\cos\theta_1$ in the usual spherical co-ordinates $\omega=z/r=(\theta_1, \ldots, \theta_{d-1})$ on $\mathbf{S}^{d-1}$. Note also that $\Delta$ on the left-hand side of Eq.~\eqref{new, PDE, C} is the Euclidean Laplacian on $\R^{d+1}=\R^d\times \R$; when acting on the $\R^d$-component, in the spherical polar co-ordinates 
\begin{align}\label{identity, beltrami}
    \Delta = \p_{rr} + \frac{d-1}{r} \p_r  + \frac{1}{r^2}\Delta_{\mathbf{S}^{d-1}},  
\end{align}
where $\Delta_{\mathbf{S}^{d-1}}$ is the Laplace--Beltrami operator on the sphere $\mathbf{S}^{d-1}$.

To proceed, let us project Eq.~\eqref{new, PDE, C} onto the first and the last components. We shall prove the blowup of the resulting ODE system for $h_\lambda^{(1)}$ and $h_\lambda^{(d+1)}$.

For the projection onto the $(d+1)^{\text{th}}$ component, note that $\left[\proj_{d+1} \mathcal{H}_{\lambda,\mathbf{D}}\right](z)=h_\lambda^{(d+1)}(r)$  by Eq.~\eqref{new, B}, which is independent of $\omega \in \mathbf{S}^{d-1}$. So Identity~\eqref{identity, beltrami} leads to
\begin{align*}
    \Delta \proj_{d+1} \mathcal{H}_{\lambda,\mathbf{D}}(z) = \left(h_\lambda^{(d+1)}\right)''(r) + \frac{d-1}{r}\left(h_\lambda^{(d+1)}\right)'(r).
\end{align*}
The projection of the right-hand side of Eq.~\eqref{new, PDE, C} can be found via Eqs.~\eqref{proj identity A} and \eqref{proj identity B}. Thus we obtain Eq.~\eqref{ODE two, d-dim}.

It remains to prove Eq.~\eqref{ODE one, d-dim}. To this end, note that Eqs.~\eqref{new, PDE, C} -- \eqref{identity, beltrami} yield that
\begin{align*}
    \Delta\left(\frac{z^1}{r} h_\lambda^{(1)}\right) = -2\lambda \frac{\p h_\lambda^{(d+1)}}{\p z^1} - \lambda^2\frac{z^1}{r}  h_1.
\end{align*}
Writing in the spherical polar co-ordinates, we have $z^1\slash r = \cos\theta_1$, $\p h_\lambda^{(d+1)}\slash{\p z^1} = \cos\theta_1 \left( h_\lambda^{(d+1)} \right)'$, as well as 
\begin{align*}
     \Delta\left(\frac{z^1}{r} h_\lambda^{(1)}\right) = \cos\theta_1\left\{ \left(h_\lambda^{(1)}\right)''+\frac{d-1}{r}\left(h_\lambda^{(1)}\right)'\right\}  + \frac{h_\lambda^{(1)}}{r^2} \Delta_{\mathbb{S}^{d-1}} \cos\theta_1.
\end{align*}
In addition, the Laplace--Beltrami $\Delta_{\mathbf{S}^{d-1}}$ on $\mathbf{S}^{d-1}$ can be expressed in terms of the Laplace--Beltrami $\Delta_{\mathbf{S}^{d-2}}$ on $\mathbf{S}^{d-2}$ --- for $\omega = (\theta_1, \omega')$ where $\omega'=(\theta_2,\cdots,\theta_{d-1}) \in \mathbf{S}^{d-2}$, one has
\begin{align*}
    \Delta_{\mathbf{S}^{d-1}} = \left(\sin\theta_1\right)^{2-d} \frac{\p}{\p\theta_1} \left( \left(\sin\theta_1\right)^{d-2}\frac{\p}{\p \theta_1} \right) + \frac{1}{\sin^2 \theta_1} \Delta_{\mathbf{S}^{d-2}},
\end{align*}
from which it follows that $$\Delta_{\mathbf{S}^{d-1}} \cos\theta_1 = -(d-1)\cos\theta_1.$$ We thus obtain Eq.~\eqref{ODE one, d-dim}.

The boundary conditions $h^{(1)}_\lambda (1) = 0$, $h^{(d+1)}_\lambda(1)=1$ follow from $\Phi_{\mathbf{D}}(1)=1$, which holds since the stopping time is zero on the boundary $\p\mathbf{D}$.   \end{proof}

\begin{remark}
When $d=2$, Proposition~\ref{propn: ODE for hyperbolic dev in d-dim} agrees with Theorem~\ref{Thm_F_bar} ($\bar{A}_\lambda = h_\lambda^{(1)}$ and $\bar{C}_\lambda = h_\lambda^{(3)}$). 
\end{remark}

To proceed, let us first introduce a few further notations. Define the dimensional constants
\begin{align*}
\eta_\pm:= d+3 \pm \sqrt{d^2-10d+9}
\end{align*}
and
\begin{align*}
\beta_\pm:=\sqrt{2d-6 \pm 2 \sqrt{d^2-10d+9}}.
\end{align*}
In each dimension $d \geq 2$, the Wr\"{o}nskian determinant $\mathcal{W}$ is a function of $\lambda$:
\begin{align}\label{wronskian, arb dim}
\mathcal{W} \equiv \mathcal{W}[\lambda] :=  \det\begin{bmatrix} 
J_{\frac{d}{2}}\left(\frac{\lambda\beta_+}{2}\right)  & J_{\frac{d}{2}}\left(\frac{\lambda\beta_-}{2}\right)\\
\eta_+\beta_+ J_{\frac{d}{2}-1}\left(\frac{\lambda\beta_+}{2}\right) & \eta_-\beta_- J_{\frac{d}{2}-1}\left(\frac{\lambda\beta_-}{2}\right)
\end{bmatrix}.
\end{align}
We can express the symmetrised development $\overline{\mathcal{H}_{\lambda, \mathbf{D}}}(r)$ on the $d$-dimensional unit disc as follows.
 
\begin{theorem}\label{thm: d-disk, PDE sol}
Let $\mathbf{D} = \mathbf{D}^{d}$ with $d \in \mathbb{N}\setminus \{1,9\}$. Let $\lambda_\star$ denote the radius of convergence of $\overline{\mathcal{H}_{\lambda, \mathbf{D}}}$. Then for all $[0, \lambda_\star[$ we have
\begin{align}\label{new, B}
    \overline{\mathcal{H}_{\lambda, \mathbf{D}}}(z)  = \left[
\frac{z^{1}}{r}h_{ \lambda}^{(1)}(r),\frac{z^{2}}{r}h_{ \lambda}^{(1)}(r),\, \cdots ,\,\frac{z^{d}}{r}h_{ \lambda}^{(1)}(r),\, h^{(d+1)}_{\lambda}(r) \right]^\top,
\end{align}
where
\begin{align}
&h_{\lambda}^{(1)}(r) = r^{1-\frac{d}{2}  }\frac{8d}{{\mathcal{W}}} \left\{J_{\frac{d}{2}}\left(\frac{\lambda\beta_-}{2}\right) J_{\frac{d}{2}}\left( \frac{\lambda  \beta_+ r}{2}\right) - J_{\frac{d}{2}}\left(\frac{\lambda\beta_+}{2}\right) J_{\frac{d}{2}}\left( \frac{\lambda \beta_- r}{2}\right)\right\},\label{B, sol, arb dim}\\
&h_{\lambda}^{(d+1)}(r) = r^{1-\frac{d}{2} }\frac{1}{{\mathcal{W}}} \bigg\{-\beta_+\eta_+   J_{\frac{d}{2}}\left(\frac{\lambda\beta_-}{2}\right) J_{\frac{d}{2}-1} \left( \frac{\lambda \beta_+ r}{2}\right)\nonumber\\
&\qquad\qquad\qquad\qquad\qquad\qquad +\beta_-\eta_-  J_{\frac{d}{2}}\left(\frac{\lambda\beta_+}{2}\right) J_{\frac{d}{2}-1}\left( \frac{\lambda \beta_- r}{2}\right)\bigg\}.\label{D, sol, arb dim}
\end{align}
For $d \in \{2, 3, \cdots, 8\}$ one may further express
\begin{align}
& h_{\lambda}^{(1)} (r) =  8d\,r^{1-\frac{d}{2}}\,\left(\frac{{\rm Im}\left\{\left[J_{\frac{d}{2}}\left(\frac{\lambda\beta_+}{2}\right)\right]^\dagger J_{\frac{d}{2}}\left(\frac{\lambda\beta_+ r}{2}\right) \right\}}{{\rm Im}\left\{\eta_-\beta_-\left[J_{\frac{d}{2}-1}\left(\frac{\lambda\beta_+}{2}\right)\right]^\dagger J_{\frac{d}{2}}\left(\frac{\lambda\beta_+}{2}\right) \right\}}\right),\label{AC for blowup, arb dim 1}\\
& h_{\lambda}^{(d+1)} = r^{1-\frac{d}{2}}\,\left(\frac{{\rm Im}\left\{\eta_{-}\beta_{-}\left[J_{\frac{d}{2}-1}\left(\frac{\lambda\beta_+}{2}\right)\right]^\dagger J_{\frac{d}{2}}\left(\frac{\lambda\beta_+ r}{2}\right) \right\}}{{\rm Im}\left\{\eta_-\beta_-\left[J_{\frac{d}{2}-1}\left(\frac{\lambda\beta_+}{2}\right)\right]^\dagger J_{\frac{d}{2}}\left(\frac{\lambda\beta_+}{2}\right) \right\}}\right)\label{AC for blowup, arb dim 2}.
\end{align}
\end{theorem}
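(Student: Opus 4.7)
The plan is to reduce the ODE system of Proposition~\ref{propn: ODE for hyperbolic dev in d-dim} to a pair of standard Bessel equations by the substitution
\begin{equation*}
h_\lambda^{(1)}(r) = r^{1-d/2} u(r), \qquad h_\lambda^{(d+1)}(r) = r^{1-d/2} v(r).
\end{equation*}
Using $2(1-d/2)+(d-1)=1$ and $(1-d/2)^2+(d-2)(1-d/2) = -(d/2-1)^2$, one converts Eqs.~\eqref{ODE one, d-dim}--\eqref{ODE two, d-dim} into the coupled system
\begin{align*}
& r^{2}u'' + r u' + \bigl(\lambda^{2}r^{2}-(d/2)^{2}\bigr)u + 2\lambda r^{2} v' + (2-d)\lambda r\, v = 0,\\
& r^{2}v'' + r v' + \bigl(d\lambda^{2}r^{2}-(d/2-1)^{2}\bigr)v + d\lambda r\, u + 2\lambda r^{2}u' = 0.
\end{align*}

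Next I would seek joint solutions of the form $u(r)=A\,J_{d/2}(\mu r)$ and $v(r)=B\,J_{d/2-1}(\mu r)$. Applying the defining Bessel equation~\eqref{std bessel ODE} and the recurrence $J_\nu'(s)=(\nu/s)J_\nu(s)-J_{\nu+1}(s)$, the Bessel derivatives collapse the system into the purely algebraic $2\times 2$ compatibility condition
\begin{equation*}
(\lambda^{2}-\mu^{2})A=2\lambda\mu B, \qquad (d\lambda^{2}-\mu^{2})B=-2\lambda\mu A,
\end{equation*}
whose determinant vanishes iff $\mu^{4}-(d-3)\lambda^{2}\mu^{2}+d\lambda^{4}=0$; solving the biquadratic gives $\mu_{\pm}=\lambda\beta_{\pm}/2$. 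A short calculation, using the identity $\beta_{\pm}^{2}(\eta_{\pm}-2d)=-8d$ (obtained by expanding $\beta_{\pm}^{2}$ and $\eta_{\pm}$ in terms of $\sqrt{d^{2}-10d+9}$), shows that the eigenmode ratios take the convenient form $A_{\pm}/B_{\pm}=-8d/(\beta_{\pm}\eta_{\pm})$.

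Regularity at the origin excludes the $Y_\nu$-branch, since $Y_{d/2}$ and $Y_{d/2-1}$ would produce at least $r^{-(d-1)}$ singularities after multiplication by $r^{1-d/2}$. The general admissible solution is therefore the linear combination of the two $J_\nu$-modes, and imposing $u(1)=0$, $v(1)=1$ gives a linear $2\times 2$ system in the two free constants whose determinant is exactly the Wronskian $\mathcal{W}$ of Eq.~\eqref{wronskian, arb dim}. Solving by Cramer's rule and substituting back into $h_\lambda^{(1)}=r^{1-d/2}u$ and $h_\lambda^{(d+1)}=r^{1-d/2}v$ produces precisely the formulas~\eqref{B, sol, arb dim}--\eqref{D, sol, arb dim}.

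Finally, for $d\in\{2,3,\ldots,8\}$ we have $d^{2}-10d+9=(d-1)(d-9)<0$, so $\sqrt{d^{2}-10d+9}$ is pure imaginary and the branches may be chosen so that $\beta_{-}=\beta_{+}^{\dagger}$, $\eta_{-}=\eta_{+}^{\dagger}$, and hence $\mu_{-}=\mu_{+}^{\dagger}$. Because $\lambda\in\R$ and $J_\nu$ has real Taylor coefficients at $0$, $J_\nu(\mu_{-}r)=[J_\nu(\mu_{+}r)]^{\dagger}$; then each numerator and the denominator $\mathcal{W}$ are of the shape $z-z^{\dagger}=2i\,\mathrm{Im}(z)$, the $2i$ cancels, and one recovers Eqs.~\eqref{AC for blowup, arb dim 1}--\eqref{AC for blowup, arb dim 2}. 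The main obstacle is the algebraic bookkeeping in the substitution step and the verification of the relation $A_\pm/B_\pm=-8d/(\beta_\pm\eta_\pm)$; everything else is a routine application of Bessel identities, regularity at the origin, and Cramer's rule.
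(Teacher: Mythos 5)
Your proposal is correct, and I have checked the key identities: the substitution $h=r^{1-d/2}(u,v)$ does produce the coupled system you display (the coefficients $1$ and $-(d/2)^2$, $-(d/2-1)^2$ work out), the cross terms collapse exactly as claimed under the ansatz $u=AJ_{d/2}(\mu r)$, $v=BJ_{d/2-1}(\mu r)$ via the two standard recurrences, the compatibility determinant gives $\mu^4-(d-3)\lambda^2\mu^2+d\lambda^4=0$ with roots $\mu_\pm=\lambda\beta_\pm/2$, and the ratio $A_\pm/B_\pm=-8d/(\beta_\pm\eta_\pm)$ follows from $\beta_\pm^2(\eta_\pm-2d)=-8d$ (equivalently $4d-\beta_\pm^2=2\eta_\mp$ with $\eta_+\eta_-=16d$), matching the coefficient pattern in Eqs.~\eqref{B, sol, arb dim}--\eqref{D, sol, arb dim}. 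The difference from the paper is that the paper obtains the fundamental system by computer algebra ($\textsc{Maple}$) and only then discards the $Y_\nu$-branches and applies the boundary conditions; you replace the black box with an explicit eigenmode derivation, which is more self-contained and, as a bonus, makes visible exactly where the hypothesis $d\notin\{1,9\}$ enters: $d^2-10d+9=0$ is precisely the degenerate case $\mu_+=\mu_-$, in which your two $J$-modes (and two $Y$-modes) fail to span the four-dimensional solution space — you should state this explicitly, since otherwise the claim that your four modes constitute the general solution is unjustified. Two small points of bookkeeping: the determinant of your $2\times2$ boundary system is $-\mathcal{W}/(8d)$ rather than $\mathcal{W}$ itself (harmless, but say so), and your conjugation step for $h_\lambda^{(d+1)}$ yields the numerator ${\rm Im}\{\eta_-\beta_-[J_{\frac{d}{2}-1}(\lambda\beta_+r/2)]^\dagger J_{\frac{d}{2}}(\lambda\beta_+/2)\}$, i.e.\ with $r$ in the order-$(\frac{d}{2}-1)$ factor; this is the version the paper itself uses later when computing $h^{(d+1)}_\lambda(0)$, and the displayed Eq.~\eqref{AC for blowup, arb dim 2} appears to have the $r$ misplaced, so your formula is the correct one.
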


\begin{proof}
The ODE system~\eqref{ODE one, d-dim}, \eqref{ODE two, d-dim} for $h^{(1)}_{\lambda}$ and $h^{(d+1)}_{\lambda}$ can be solved, \emph{e.g.}, by $\textsc{Maple}^\text{\textregistered}$.    Denote $\eta_\pm:= d+3 \pm \sqrt{d^2-10d+9}$ and $\beta_\pm:=\sqrt{2d-6 \pm 2 \sqrt{d^2-10d+9}}$. For each fixed $\lambda$, $h^{(1)}_{\lambda}(r)$ and $h^{(d+1)}_{\lambda}(r)$ are linear combinations of Bessel functions of the first and second kinds:
\begin{eqnarray}
&&h^{(1)}_\lambda(r) = \left(C_{1} J_{\frac{d}{2}}\left( \frac{\lambda  \beta_+ r}{2}\right) +  C_{2}J_{\frac{d}{2}}\left( \frac{\lambda \beta_- r}{2}\right)+ C_{3} Y_{\frac{d}{2}}\left( \frac{\lambda  \beta_+ r}{2}\right) +  C_{4}Y_{\frac{d}{2}}\left( \frac{\lambda \beta_- r}{2}\right)\right) r^{-\frac{d}{2}+1},\label{new-a}\\
&&h^{(d+1)}_\lambda(r) = - \frac{1}{8d}\Bigg\{C_1 \eta_+ \beta_+  J_{\frac{d}{2}-1} \left( \frac{\lambda \beta_+ r}{2}\right) + C_2 \eta_-\beta_-  J_{\frac{d}{2}-1}\left( \frac{\lambda \beta_- r}{2}\right)\nonumber\\
&&\qquad\qquad\qquad\qquad+C_3\eta_+\beta_+  Y_{\frac{d}{2}-1} \left( \frac{\lambda \beta_+ r}{2}\right) + C_4 \eta_-\beta_-  Y_{\frac{d}{2}-1}\left( \frac{\lambda \beta_- r}{2}\right)\Bigg\}r^{-\frac{d}{2}+1}. \label{new-b}
\end{eqnarray}

In view of the boundary conditions for $h^{(1)}_\lambda$, $h^{(d+1)}_\lambda$, that $\lim_{r \searrow 0}J_\nu(r)=0$,  and that 
\begin{align*}
Y_\nu(r) \sim -\frac{1}{\pi} \G(\nu) \left(\frac{z}{2}\right)^{-\nu}\quad\text{ as } r \searrow 0
\end{align*}
for $\nu = d/2$ or $d/2-1$ (\cite{lozier2003nist}, \S\S 10.7.3, 10.7.4), we get $C_3=C_4=0$; \emph{i.e.}, the solutions $(B_\lambda, D_\lambda)$ contains no branches of $Y_\nu$. Therefore, we arrive at a simplified ODE system:
\begin{eqnarray}
&&h^{(1)}_\lambda(r) = \left(C_{1} J_{\frac{d}{2}}\left( \frac{\lambda  \beta_+ r}{2}\right) +  C_{2}J_{\frac{d}{2}}\left( \frac{\lambda \beta_- r}{2}\right)\right) r^{-\frac{d}{2}+1},\label{eqn:d_ball_A}\\
&&h^{(d+1)}_\lambda(r) = - \frac{1}{8d}\left(C_1 \eta_+ \beta_+  J_{\frac{d}{2}-1} \left( \frac{\lambda \beta_+ r}{2}\right) + C_2 \eta_-\beta_-  J_{\frac{d}{2}-1}\left( \frac{\lambda \beta_- r}{2}\right)\right)r^{-\frac{d}{2} +1} \label{eqn:d_ball_C}
\end{eqnarray}
with boundary conditions
\begin{equation}\label{bc, arb dim, B and D}
h^{(1)}_\lambda(1) = 0\qquad \text{ and }\qquad h^{(d+1)}_\lambda(1)=1.
\end{equation}

Solving from the algebraic system~\eqref{eqn:d_ball_A},  \eqref{eqn:d_ball_C}, and \eqref{bc, arb dim, B and D} the constants $C_1$ and $C_2$, we get
\begin{align*}
C_1 = 8d\,\frac{ J_{\frac{d}{2}}\left(\frac{\lambda\beta_-}{2}\right)}{\mathcal{W}}\quad\text{ and }\quad C_2 = -8d\, \frac{ J_{\frac{d}{2}}\left(\frac{\lambda\beta_+}{2}\right)}{\mathcal{W}},
\end{align*}
with the Wr\"{o}nskian determinant
\begin{align}\label{wronskian, arb dim}
\mathcal{W} \equiv \mathcal{W}[\lambda] :=  \det\begin{bmatrix} 
J_{\frac{d}{2}}\left(\frac{\lambda\beta_+}{2}\right)  & J_{\frac{d}{2}}\left(\frac{\lambda\beta_-}{2}\right)\\
\eta_+\beta_+ J_{\frac{d}{2}-1}\left(\frac{\lambda\beta_+}{2}\right) & \eta_-\beta_- J_{\frac{d}{2}-1}\left(\frac{\lambda\beta_-}{2}\right)
\end{bmatrix}
\end{align}
provided that it is non-vanishing. Thus we arrive at
\begin{eqnarray}
&&h^{(1)}_\lambda(r) = r^{1-\frac{d}{2}}\frac{8d}{{\mathcal{W}}} \left\{J_{\frac{d}{2}}\left(\frac{\lambda\beta_-}{2}\right) J_{\frac{d}{2}}\left( \frac{\lambda  \beta_+ r}{2}\right) - J_{\frac{d}{2}}\left(\frac{\lambda\beta_+}{2}\right) J_{\frac{d}{2}}\left( \frac{\lambda \beta_- r}{2}\right)\right\},\label{B, sol, arb dim}\\
&&h^{(d+1)}_\lambda(r) = \frac{r^{1-\frac{d}{2}}}{{\mathcal{W}}} \left\{-\beta_+\eta_+   J_{\frac{d}{2}}\left(\frac{\lambda\beta_-}{2}\right) J_{\frac{d}{2}-1} \left( \frac{\lambda \beta_+ r}{2}\right) +\beta_-\eta_-  J_{\frac{d}{2}}\left(\frac{\lambda\beta_+}{2}\right) J_{\frac{d}{2}-1}\left( \frac{\lambda \beta_- r}{2}\right)\right\}.\label{D, sol, arb dim}
\end{eqnarray}

For $3\leq d \leq 8$ we set
\begin{equation*}
\Upsilon=\Upsilon(d) := \sqrt{-(d-1)(d-9)}. 
\end{equation*}
Then $\eta_\pm = d+3 \pm i\Upsilon$ and $\beta_\pm = \sqrt{2} \sqrt{d-3\pm i \Upsilon}$. A direct computation further leads to
\begin{align}\label{Re and Im parts of beta pm}
&\left[{\rm Re}\left({\beta_\pm}\right)\right]^2 = \sqrt{(d-3)^2+\Upsilon^2} + (d-3) = 2\sqrt{d}+d-3,\\
&\left[{\rm Im}\left({\beta_\pm}\right)\right]^2 = \sqrt{(d-3)^2+\Upsilon^2} - (d-3) = 2\sqrt{d}-d+3.
\end{align}
In the above, ${\rm Re}\left({\beta_+}\right)$ and ${\rm Im}\left({\beta_+}\right)$ have the same sign, and ${\rm Re}\left({\beta_-}\right)$ and ${\rm Im}\left({\beta_-}\right)$ have opposite signs. In view of the explicit formulae~\eqref{wronskian, arb dim}, \eqref{B, sol, arb dim}, and  \eqref{D, sol, arb dim}, we may choose without loss of generality that ${\rm Re}\left({\beta_\pm}\right)>0$. That is,
\begin{align}
\beta_\pm &= \sqrt{2\sqrt{d}+d-3} \pm i \sqrt{2\sqrt{d}-d+3}\nonumber\\
&= \sqrt{\left(\sqrt{d}+3\right)\left(\sqrt{d}-1\right)} \pm i \sqrt{\left(\sqrt{d}+1\right)\left(3-\sqrt{d}\right)}.
\end{align}

With the above choice one can further simplify the Wr\"{o}nskian in Eq.~\eqref{wronskian, arb dim}.  Indeed, as $\eta_\pm$ and $\beta_\pm$ are complex conjugates and, by \cite{lozier2003nist}, \S 10.11.9, $J_\nu\left(z^\dagger\right) = \left[J_\nu(z)\right]^\dagger$, it holds that
\begin{equation}\label{wronskian, purely imaginary}
\mathcal{W} = 2i\,{\rm Im}\left\{\eta_-\beta_-\left[J_{\frac{d}{2}-1}\left(\frac{\lambda\beta_+}{2}\right)\right]^\dagger J_{\frac{d}{2}}\left(\frac{\lambda\beta_+}{2}\right) \right\}.
\end{equation}
Similarly,
\begin{align*}
J_{\frac{d}{2}}\left(\frac{\lambda\beta_-}{2}\right) J_{\frac{d}{2}}\left( \frac{\lambda  \beta_+ r}{2}\right) - J_{\frac{d}{2}}\left(\frac{\lambda\beta_+}{2}\right) J_{\frac{d}{2}}\left( \frac{\lambda \beta_- r}{2}\right) = 2i\,{\rm Im}\left\{\left[J_{\frac{d}{2}}\left(\frac{\lambda\beta_+}{2}\right)\right]^\dagger J_{\frac{d}{2}}\left(\frac{\lambda\beta_+ r}{2}\right) \right\}.
\end{align*}
Eq.~\eqref{B, sol, arb dim} now yields closed-form expressions for $h_{\lambda}^{(1)}$ and $h_{\lambda}^{(d+1)}$, namely Eqs.~\eqref{AC for blowup, arb dim 1} and \eqref{AC for blowup, arb dim 2}.   \end{proof}

\begin{remark}
\cite{boedihardjo2021expected}, Lemma~7 for 2D unit disc is a special case of the above theorem. One can verify that the constants $\alpha,\zeta$ in \cite{boedihardjo2021expected} are given by $\alpha = -\frac{16\beta_{-}\eta_{-}}{\left|\beta_{-}\eta_{-}\right|^2} $ and $\zeta = \frac{\beta_{+}}{2}$.
\end{remark}

\begin{remark}
For $d = 9$, $\mathcal{W}$ is zero and Eqs.~\eqref{ODE one, d-dim} and \eqref{ODE two, d-dim} admit a fundamental solution $\overline{\mathcal{H}_{\lambda, \mathbf{D}}}$ in stark contrast with the solutions for $d \neq 9$ given in Eqs.~\eqref{new-a} and \eqref{new-b}. See Appendix A, Lemma~\ref{lem:d=9case}.
\end{remark}

The finiteness of  radius of convergence of the expected signature on $\mathbf{D}=\mathbf{D}^d$, $3\leq d \leq 8$ is deduced from the ODE system in Proposition~\ref{propn: ODE for hyperbolic dev in d-dim}.

\begin{theorem}\label{thm: d-dim, finite ROC}
The radius of convergence of the expected signature of the stopped Brownian motion  on $\mathbf{D}=\mathbf{D}^d$ is finite for $d \in \{2, \ldots,8\}$.
\end{theorem}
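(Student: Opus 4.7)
The plan is to produce a positive real number $\lambda_\star>0$ at which the explicit Bessel-function ratio of Eq.~\eqref{AC for blowup, arb dim 1} for $h_\lambda^{(1)}(r_0)$ has a genuine pole, for some $r_0 \in \,]0,1[$. Once established, the geometric inequality $h_\lambda^{(d+1)}(r_0) \geq |h_\lambda^{(1)}(r_0)|$ from Corollary~\ref{cor: new} promotes a blowup of $h_\lambda^{(1)}(r_0)$ into a blowup of $h_\lambda^{(d+1)}(r_0)$. Since an entire function of $\lambda$ cannot possess such a real pole, $\lambda \mapsto h_\lambda^{(d+1)}(r_0)$ cannot extend to an entire function; combined with the fact that $H$ is a bounded morphism, the expected signature $\Phi_\mathbf{D}$ then has finite radius of convergence at $(r_0, 0,\ldots, 0)^\top\in\mathbf{D}$ by Proposition~\ref{prop/def}. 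Rotational symmetry of $\mathbf{D}$ extends this to every point of $\mathbf{D}$.

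Concretely, I would first locate the smallest positive zero $\lambda_\star$ of the Wr\"{o}nskian $\mathcal{W}[\lambda] = 2i\,{\rm Im}\{\eta_-\beta_-[J_{d/2-1}(\lambda\beta_+/2)]^\dagger J_{d/2}(\lambda\beta_+/2)\}$ on the positive real axis. For $d \in \{2,\ldots,8\}$, Eq.~\eqref{Re and Im parts of beta pm} gives ${\rm Im}(\beta_+)>0$, so $\lambda\beta_+/2 \in \C\setminus\R$ for $\lambda>0$; applying the asymptotic $J_\nu(z) \sim \sqrt{2/\pi z}\cos(z-\nu\pi/2-\pi/4)$ (\cite{lozier2003nist}, \S 10.7), the product $[J_{d/2-1}(\lambda\beta_+/2)]^\dagger J_{d/2}(\lambda\beta_+/2)$ grows like $e^{\lambda{\rm Im}(\beta_+)}$ in modulus but with an oscillating phase, so $\mathcal{W}[\lambda]$ changes sign infinitely often on $\,]0, \infty[$ and hence admits a smallest positive real zero $\lambda_\star$. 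Next, I would verify that the numerator ${\rm Im}\{[J_{d/2}(\lambda_\star\beta_+/2)]^\dagger J_{d/2}(\lambda_\star\beta_+ r/2)\}$ is not identically zero as a function of $r$. Using the Taylor expansion $J_{d/2}(\lambda_\star\beta_+ r/2) = \sum_{k\geq 0} c_k (\lambda_\star\beta_+)^{d/2+2k} r^{d/2+2k}$ with nonzero real coefficients $c_k$, identical vanishing would force $M \cdot c_k (\lambda_\star\beta_+)^{d/2+2k}\in\R$ for all $k$, where $M := [J_{d/2}(\lambda_\star\beta_+/2)]^\dagger$; comparing $k=0, 1$ would force $\beta_+^2 \in \R$, contradicting $\beta_+^2 = 2(d-3) + 2i\sqrt{-(d-1)(d-9)} \notin \R$ for $d \in \{2,\ldots, 8\}$. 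Since $J_{d/2}$ has only real zeros, $M\neq 0$; hence the numerator is a non-trivial analytic function of $r$, and there exists $r_0 \in \,]0, 1[$ at which it does not vanish, yielding the required pole of $h_\lambda^{(1)}(r_0)$ at $\lambda_\star$.

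The main obstacle I anticipate is the rigorous justification of the existence of $\lambda_\star$ as a bona fide real zero of $\mathcal{W}$: while the asymptotic expansion strongly suggests infinitely many sign changes, making this precise requires either explicit quantitative estimates on Bessel functions with complex arguments (analogous to the bound $\lambda_\star \in \,]2.5, 3[$ obtained for $d=2$ in \cite{boedihardjo2021expected}, Section~6) or a dedicated case-by-case verification for each $d \in \{3, \ldots, 8\}$. Since only finitely many dimensions are involved, a computer-assisted or carefully crafted analytic treatment should be feasible. The restriction $d \leq 8$ is indispensable: at $d = 9$ the Wr\"{o}nskian degenerates identically (see the Remark after Theorem~\ref{thm: d-disk, PDE sol}), while for $d \geq 10$ the parameters $\beta_\pm, \eta_\pm$ become real, causing the imaginary-part structure of $\mathcal{W}$ and of the numerator to collapse and invalidating the above argument.
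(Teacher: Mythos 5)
Your overall strategy is the same as the paper's: locate a positive real zero $\lambda_\star$ of the Wr\"{o}nskian denominator, show the numerator of the closed-form Bessel ratio does not vanish there, and conclude blowup of the hyperbolic development, hence finite radius of convergence. Your treatment of the numerator is genuinely different and is a nice soft argument: instead of evaluating at $r=0$ and numerically certifying non-vanishing (which is what the paper does, via truncated Taylor polynomials, the error bound of Lemma~\ref{lem: bessel}, the monotonicity trick of Lemma~\ref{Lemma: bound_estimator}, and Table~\ref{table:arb d_ball_numerator}), you observe that identical vanishing of $r\mapsto {\rm Im}\{M\,J_{d/2}(\lambda_\star\beta_+r/2)\}$ would force $(\lambda_\star\beta_+)^2\in\R$, which fails for $2\leq d\leq 8$; together with $M\neq 0$ (reality of the zeros of $J_{d/2}$) this yields some $r_0$ at which the numerator is nonzero. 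That argument is correct and replaces a numerical verification by an algebraic one. (Two minor caveats: it only certifies blowup at starting points on the sphere $|z|=r_0$, not at every point of $\mathbf{D}$ — though the theorem as stated does not require more, and the paper itself only treats $r=0$ here; and the passage through Corollary~\ref{cor: new} is unnecessary, since blowup of any component already contradicts infinite radius of convergence.)

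The genuine gap is in your existence argument for $\lambda_\star$, and it is not merely a matter of missing rigor: the stated heuristic is false. For $z=\lambda\beta_+/2$ with ${\rm Im}(\beta_+)>0$, the dominant branch of the asymptotics is $J_\nu(z)\sim \tfrac12\sqrt{2/(\pi z)}\,e^{-i(z-\nu\pi/2-\pi/4)}$, so
\begin{equation*}
\left[J_{\frac d2-1}(z)\right]^\dagger J_{\frac d2}(z)\;\sim\;\frac{e^{2\,{\rm Im}(z)}}{2\pi|z|}\,e^{i\pi/2},
\end{equation*}
i.e.\ the modulus grows exponentially but the phase converges to the \emph{constant} $\pi/2$. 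Consequently $\Theta(\lambda)\sim \frac{e^{\lambda\,{\rm Im}(\beta_+)}}{\pi\lambda|\beta_+|}\,{\rm Re}(\eta_-\beta_-)$ is eventually of one fixed sign; it does not change sign infinitely often, so no zero can be extracted from oscillation. A zero does exist, but for a different reason: the leading Taylor coefficient of $\Theta$ at $\lambda=0$ is a negative multiple of $\lambda^{d-1}$ (it equals $-c\,|\beta_+|^{d}\,\Upsilon\,\lambda^{d-1}$ with $c>0$), while ${\rm Re}(\eta_-\beta_-)>0$ for $3\leq d\leq 8$, so $\Theta$ changes sign somewhere on $]0,\infty[$ by the intermediate value theorem. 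Either this small-$\lambda$/large-$\lambda$ comparison (which requires justifying the large-$\lambda$ asymptotics uniformly) or the paper's explicit certification $\left[\mathring{\Theta}+{\rm Err}_\Theta^n\right](2.5)<0<\left[\mathring{\Theta}-{\rm Err}_\Theta^n\right](3)$ (Table~\ref{table:arb d}) is needed to close this step; as written, your proposal does not supply it.
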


\begin{proof}

By Theorem \ref{thm: d-disk, PDE sol}, for $d \in \{2, \ldots, 8\}$ we have
\begin{eqnarray*}
h_{\lambda}^{(d+1)}(r) &=& r^{1-\frac{d}{2}}\,\left(\frac{{\rm Im}\left\{\eta_{-}\beta_{-}\left[J_{\frac{d}{2}-1}\left(\frac{\lambda\beta_+}{2}\right)\right]^\dagger J_{\frac{d}{2}}\left(\frac{\lambda\beta_+ r}{2}\right) \right\}}{{\rm Im}\left\{\eta_-\beta_-\left[J_{\frac{d}{2}-1}\left(\frac{\lambda\beta_+}{2}\right)\right]^\dagger J_{\frac{d}{2}}\left(\frac{\lambda\beta_+}{2}\right) \right\}}\right).  
\end{eqnarray*}
We shall prove that there exists a $\lambda_\star >0$ such that $h_{\lambda}^{(d+1)}(0)$ blows up as $\lambda \to \lambda_\star$.

Let us now analyse the zeros of the denominator
\begin{align}\label{XI}
\Theta(\lambda) := {\rm Im}\left\{\eta_-\beta_-\left[J_{\frac{d}{2}-1}\left(\frac{\lambda\beta_+}{2}\right)\right]^\dagger J_{\frac{d}{2}}\left(\frac{\lambda\beta_+}{2}\right) \right\}. 
\end{align}
Consider the Taylor expansion for $J_\nu(z)$; $\nu \in\left\{ \frac{d}{2}-1,\frac{d}{2}\right\}$. As in the appendix we write
\begin{align*}
J_{\nu}(z) = \mathring{J}_\nu^n(z) + \mathscr{R}_\nu^n(z),
\end{align*}
where
\begin{align*}
\mathring{J}_\nu^n(z)  := \left(\frac{z}{2}\right)^\nu \sum_{k=0}^{n-1} (-1)^k \frac{\left(\frac{z^2}{4}\right)^k}{k! \G(k+\nu+1)},
\end{align*}
and the remainder $\mathscr{R}_\nu^n(z)$ consists of summation from $k=n$ to $\infty$. Similarly, denote
\begin{align*}
\mathring{\Theta}^n(\lambda):={\rm Im}\left\{\eta_-\beta_-\left[\maino\left(\frac{\lambda\beta_+}{2}\right)\right]^\dagger \maine\left(\frac{\lambda\beta_+}{2}\right) \right\}.
\end{align*} The error $\left|\Theta(\lambda)-\mathring{\Theta}^n(\lambda)\right|$ can be estimated via Lemma~\ref{lem: bessel} and the triangle inequality:
\begin{align*}
&\left|\Theta(\lambda)-\mathring{\Theta}^n(\lambda)\right|\\
&\quad \leq \left|\eta_-\beta_-\right|\Bigg\{\left|\remo\left(\frac{\lambda\beta_-}{2}\right)\right| \left|\maine\left( \frac{\lambda\beta_+}{2} \right)\right| + \left|
 \remo\left(\frac{\lambda\beta_-}{2}\right)\right| \left|\reme\left( \frac{\lambda\beta_+}{2} \right)\right|\\
 &\qquad + \left|\maino\left(\frac{\lambda\beta_-}{2}\right)\right| \left|\reme\left(\frac{\lambda\beta_+}{2}\right)\right|\Bigg\}\\
 &\quad \leq \left|\eta_-\beta_-\right|\Bigg\{  \left[\frac{2^{\frac{d-1}2}}{\sqrt{\pi}\cdot{n!}} \left(\frac{\left|\lambda\beta_-\right|}{4}\right)^{2n+\frac{d}{2}-1}\right]\cdot \left[\frac{1}{1- \frac{\left|{\lambda\beta_-}\right|^2}{16(2n+d-2)}}\right]\left|\maine\left( \frac{\lambda\beta_+}{2} \right)\right| \\
 &\qquad + \left[\frac{2^{\frac{d-1}2}}{\sqrt{\pi}\cdot{n!}} \left(\frac{\left|\lambda\beta_-\right|}{4}\right)^{2n+\frac{d}{2}-1}\right]\cdot \left[\frac{1}{1- \frac{\left|{\lambda\beta_-}\right|^2}{16(2n+d-2)}}\right]\cdot \left[\frac{\left(\frac{|\lambda\beta_+|}{4}\right)^{2n+\frac{d}{2}}}{\left[\left(n+\frac{d}{2}\right)!\right]^2}\right] \cdot \left[ \frac{1}{1-\frac{|\lambda\beta_+|^2}{64(2n+d+2)^2}} \right]\\
 &\quad+  \left|\maino\left(\frac{\lambda\beta_-}{2}\right)\right|   \left[\frac{\left(\frac{|\lambda\beta_+|}{4}\right)^{2n+\frac{d}{2}}}{\left[\left(n+\frac{d}{2}\right)!\right]^2}\right] \cdot \left[ \frac{1}{1-\frac{|\lambda\beta_+|^2}{64(2n+d+2)^2}} \right]\Bigg\}\\
&\quad =: {\rm Err}_\Theta^n(\lambda).
\end{align*}
Both ${\rm Err}_\Theta^n(\lambda)$ and $\mathring{\Theta}^n(\lambda)$ involve only finitely many terms; hence, with each given $\lambda$, they can be computed explicitly by hand.

We are now at the stage of concluding:
\begin{equation}\label{denominator, arb d, conclusion}
\text{There exists $\lambda_\star \in ]2.5,3[$ such that $\Theta(\lambda_\star)=0$}.
\end{equation}
Indeed, by the triangle inequality and continuity of $\Theta$, it suffices to prove that 
\begin{equation}\label{Theta ><0}
\left[\mathring{\Theta} +{\rm Err}_\Theta^n\right](\lambda= 2.5) < 0 < \left[\mathring{\Theta} -{\rm Err}_\Theta^n\right](\lambda=3).
\end{equation}
With the help of $\textsc{Maple}^\text{\textregistered}$, we take $n=7$ and compute the two bounds in Eq.~\eqref{Theta ><0}. The results are tabulated below, which verify Eq.~\eqref{Theta ><0} and hence Eq.~\eqref{denominator, arb d, conclusion}.
\begin{table}[ht]
\caption{The denominator $\Theta$ has a root in $]2.5,3[$} % title of Table
\centering % used for centering table
\begin{tabular}{c c c} % centered columns (3 columns)
\hline\hline %inserts double horizontal lines
Dimension & Value for $\left[\mathring{\Theta} +{\rm Err}_\Theta^n\right](2.5)$ & Value for $\left[\mathring{\Theta} -{\rm Err}_\Theta^n\right](3)$ \\ [0.5ex] % inserts table
%heading
\hline % inserts single horizontal line
3 & -2.072008 & 6.951356 \\ % inserting body of the table
4 & -1.682841 & 8.366543 \\
5 & -1.315936 & 6.921044  \\
6 & -1.107269 & 4.734511  \\
7 & -0.693811 & 2.530460 \\
8 & -0.408603 & 1.115177 \\ [1ex] % [1ex] adds vertical space
\hline %inserts single line
\end{tabular}
\label{table:arb d}
\end{table}

Next let us investigate $h^{(d+1)}(0)$, defined in the limiting sense:
\begin{eqnarray*}
h^{(d+1)}(0) = \lim_{r \rightarrow 0 }r^{1-\frac{d}{2}  }\frac{{\rm Im}\left\{\eta_{-}\beta_{-}\left[J_{\frac{d}{2}-1}\left(\frac{\lambda\beta_+r}{2}\right)\right]^\dagger J_{\frac{d}{2}}\left(\frac{\lambda\beta_+ }{2}\right) \right\}}{\Theta(\lambda)} =: \frac{\mathcal{N}(\lambda)}{\Theta(\lambda)}.
\end{eqnarray*}
As 
\begin{eqnarray*}
\lim_{r \rightarrow 0}\frac{\left[J_{\frac{d}{2}-1}\left(\frac{\lambda\beta_+ r}{2}\right)\right]^\dagger }{ r^{\frac{d}{2} - 1}} = \frac{1}{\Gamma(\frac{d}{2})} \left(\frac{\lambda \beta_{-}}{4}\right)^{\frac{d}{2}-1},
\end{eqnarray*}
we have
\begin{align*}
\mathcal{N}(\lambda) = \frac{1}{\Gamma(\frac{d}{2})} {\rm Im} \left( \eta_{-}\beta_{-}\left(\frac{\lambda \beta_{-}}{4}\right)^{\frac{d}{2}-1} J_{\frac{d}{2}}\left(\frac{\lambda\beta_+ }{2}\right)\right).
\end{align*}

To show that the numerator $\mathcal{N}(\lambda)$ is non-vanishing for $\lambda \in [2.5, 3]$, we split $\mathcal{N}(\lambda)$ into the approximating polynomial $\mathring{\mathcal{N}}^{n}(\lambda)$ and the remainder $\text{Err}^{n}_{\mathcal{N}}$,
where 
\begin{eqnarray*}
\mathring{\mathcal{N}}^{n}(\lambda) = {\rm Im}\left( \eta_{-}\beta_{-}\left(\frac{\lambda \beta_{-}}{4}\right)^{\frac{d}{2}-1} \mathring{J}^n_{\frac{d}{2}}\left(\frac{\lambda \beta_{+}}{2}\right)\right).
\end{eqnarray*}
Here again we use the polynomial $\mathring{J}^{n}_{\frac{d}{2}}\left(\frac{\lambda \beta_{+}}{2}\right)$ to approximate ${J}^{n}_{\frac{d}{2}}\left(\frac{\lambda \beta_{+}}{2}\right)$ in the numerator, with $n = 5$. Note that $\mathring{\mathcal{N}}^{n}(\lambda)$ is an explicit polynomial in $\lambda$ of degree $2n+d$, which is monotone decreasing on $[2.5, 3]$. One can bound the error   uniformly on $[2.5, 3]$:
\begin{align*}
\text{Err}_{\mathcal{N}}^{n}&:=\sup_{\lambda \in [2.5, 3]} \left|\mathcal{N}(\lambda) - \mathring{\mathcal{N}}^{n}(\lambda)\right| \\
&\leq \left\{\left|\eta_{-} \beta_{-} \left(\frac{3 \beta_{-}}{4}\right)^{\frac{d}{2}-1}\right|\right\} \cdot \left\{ \sup_{\lambda \in [2.5, 3]} \left|J_{\frac{d}{2}}\left(\frac{\lambda \beta_{+}}{2}\right) - \mathring{J}^{n}_{\frac{d}{2}}\left(\frac{\lambda \beta_{+}}{2}\right)\right|\right\}\\
&\leq \left\{\left| \eta_{-} \beta_{-} \left(\frac{3 \beta_{-}}{4}\right)^{\frac{d}{2}-1}\right| \right\}\left\{\left|E\left(\frac{3\beta_{+}}{2}, n, \frac{d}{2}\right)\right|\right\},
\end{align*}
where $E(z,n,\nu)$ is given below as in Appendix, Lemma~\ref{lem: bessel}:
\begin{eqnarray*}
E(z, n, \nu) := \begin{cases}
\left\{ \frac{\left(\frac{|z|}{2}\right)^{2n+\nu}}{n!(n+\nu)!}\right\} \cdot \left\{ \frac{1}{1-\frac{|z|^2}{4(n+1)^2}} \right\} &\text{ if $\nu$ is an integer},\\
\left\{\frac{2^n \sqrt{\pi}}{{n! (2n+2\nu+1)!!}} \left(\frac{|z|}{2}\right)^{2n+\nu}\right\}\cdot \left\{\frac{1}{1- \frac{|z|^2}{4(n+1)^2}}\right\}& \text{ if $\nu$ is a half-integer}.
\end{cases}
\end{eqnarray*}

\begin{table}[ht]
\caption{The numerator $\mathcal{N}(\lambda)$ is non-vanishing for $\lambda \in ]2.5,3[$} % title of Table
\centering % used for centering table
\begin{tabular}{c c c} % centered columns (3 columns)
\hline\hline %inserts double horizontal lines
Dimension & Value for $\mathring{\mathcal{N}}^n(2.5)$ & Error bound $\text{Err}^n_{\mathcal{N}} $ \\ [0.5ex] % inserts table
%heading
\hline % inserts single horizontal line
3 & -48.656672  & 1.266852 \\ % inserting body of the table
4 & -55.063129 & 1.265336\\
5 & -51.368007 & 5.982517  \\
6 & -40.528560 & 3.647551  \\
7 & -26.851665 & 12.270795 \\
8 & -13.908808 & 5.810051 \\ [1ex] % [1ex] adds vertical space
\hline %inserts single line
\end{tabular}
\label{table:arb d_ball_numerator}
\end{table}

Table~\ref{table:arb d_ball_numerator}  summarises the values for $\mathring{\mathcal{N}}(2.5)$ and $\text{Err}_\mathcal{N}^n$ for $d \in \{3, \cdots, 8\}$  with accuracy up to 6 decimal points. This together with Lemma \ref{Lemma: bound_estimator} implies that
\begin{eqnarray*}
\sup_{\lambda \in [2.5, 3]} \mathcal{N}(\lambda) \leq \mathring{\mathcal{N}}^{n}(2.5) + \text{Err}^{n}_{\mathcal{N}} < 0,
\end{eqnarray*}
hence proves the thesis.   \end{proof}

\subsection{General $d$-dimensional $C^{2, \alpha}$ bounded domains}

Let $\Omega\Subset\mathbb{R}^{d}$ be a $C^{2, \alpha}$-domain and $x \in \Omega$ be an arbitrary point. Denote by $\mathcal{H}_{\lambda, \Omega}(z)$ be the hyperbolic development of the expected signature of a  $d$-dimensional Brownian motion starting from $z$ and stopped at the first exit time from $\Omega$ (scaled by $\lambda$). We are concerned with the domain-averaging hyperbolic development. Without loss of generality we may assume $x=0$, as $\Phi_\Omega(z) = \Phi_{\Omega-x}(z-x)$.

Recall from Definition~\ref{def: domain avg} the domain-averaging hyperbolic development (with integration understood entry-wise):
\begin{align*}
\overline{\mathcal{H}_{\lambda,\e,\Omega}}(z):= \int_{SO(d)} \mathcal{H}_{\lambda,\Omega_R}(z)\,\dd\chi(R).
\end{align*}
Here $
\overline{\mathcal{H}_{\lambda,\e,\Omega}}(z) \in \mathfrak{gl}(d+1;\R)$ for each $z \in \overline{\mathbf{B}(0,\e)}$, with $\e>0$ chosen so small that $\mathbf{B}(0,\e) \subset \bigcap_{S \in SO(d)} \Omega_S$. Compare with Proposition~\ref{propn: ODE for hyperbolic dev in d-dim} where $\Omega = \mathbf{D}$.

By Theorem~\ref{thm: pde for average, new}, $\overline{\mathcal{H}_{\lambda,\e,\Omega}}$ satisfies the same PDE as in Lemma~\ref{lem: PDE_H_d_ball} on $\mathbf{B}(0,\e)$:
\begin{theorem}[PDE for $\overline{\mathcal{H}_{\lambda,\e,\Omega}}$]\label{Thm:PDE_general_domain_hyperbolic_development} Let $\e >0$ and $\mathbf{B}(0,\e) \subset \bigcap_{R \in SO(d)} \Omega_R$. There exists $\lambda^* >0$ such that for every $\lambda \in [0, \lambda^*]$, 
\begin{equation*}
\Delta \overline{\mathcal{H}_{\lambda,\e, \Omega}}(z)=-2\lambda\sum_{i=1}^{d}He_{i}\frac{\partial \overline{\mathcal{H}_{\lambda,\e,\Omega}}}{\partial z^{i}}(z)-\lambda^{2}\left(\sum_{i=1}^{d}\left(He_{i}\right)^{2}\right)\overline{\mathcal{H}_{\lambda,\e,\Omega}}(z)\qquad \text{ for each } z \in \mathbf{B}(0,\e).
\end{equation*}
\end{theorem}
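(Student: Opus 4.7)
The plan is to realize Theorem~\ref{Thm:PDE_general_domain_hyperbolic_development} as a direct specialization of Theorem~\ref{thm: pde for average, new} with the morphism $M$ chosen to be the hyperbolic development $H$. First I would verify that $H: T((E)) \to \mathfrak{gl}(d+1;\R)$ qualifies as an admissible algebra morphism of bounded operator norm; this is transparent from the explicit block-matrix description given in \S 2.3, where $H(z)$ is off-diagonal with Hilbert--Schmidt norm $\sqrt{2}\,|z|$. This places $H$ within the scope of Corollary~\ref{cor: differentiability}, guaranteeing the existence of a threshold $\lambda^\ast>0$ (as in Eq.~\eqref{lambda star}) below which the series defining $\mathcal{H}_{\lambda,\Omega_R}$ converges to a $C^2$-function on $\mathbf{B}(0,\e)$.

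Next, by construction $\overline{\mathcal{H}_{\lambda,\e,\Omega}}$ is precisely the object $\overline{\mathcal{F}_{\lambda,\e}}$ taken with $M = H$ (compare the definition \eqref{F and M} and the set-up preceding Theorem~\ref{Thm_F_bar}). Substituting $M = H$ into Eq.~\eqref{eq:Lemma2PDE} of Theorem~\ref{thm: pde for average, new} then yields the claimed PDE directly. If one wishes to work with the full matrix-valued $\overline{\mathcal{M}_{\lambda,\e}}$ rather than just its last column, the same argument applies column-by-column by linearity of the PDE in the unknown.

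The main (though mild) obstacle is the uniformity of $\lambda^\ast$ over $R \in SO(d)$, needed to justify differentiation under the Haar integral $\int_{SO(d)} \mathcal{H}_{\lambda,\Omega_R}(z)\,\dd\chi(R)$. This is resolved by the rotational invariance of the Euclidean structure on $\R^d$: the boundary parameterisations of $\Omega_R$ share the same $C^{2,\alpha}$-seminorms as those of $\Omega$, so the Schauder-type constant $C_0$ in the geometric bound~\eqref{geom bd} of Proposition~\ref{prop: schauder} is $R$-independent, and hence $\lambda^\ast$ depends only on $\Omega$ and $d$. A dominated convergence argument then permits exchanging $\Delta$ and $\partial_{z^i}$ with the Haar integration, reducing the present theorem to Theorem~\ref{thm: pde for average, new} and completing the proof.
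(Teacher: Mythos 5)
Your proposal is correct and follows essentially the same route as the paper, which derives this theorem directly as the specialization $M=H$ of Theorem~\ref{thm: pde for average, new} (the PDE for the general domain-averaging development $\overline{\mathcal{F}_{\lambda,\e}}$), with the differentiation under the Haar integral justified by the $C^2$-regularity of each $\mathcal{F}_{\lambda,\Omega_R}$. Your additional remarks on the $R$-uniformity of $\lambda^\ast$ and the boundedness of $H$ are consistent with what the paper's earlier results (Corollary~\ref{cor: differentiability} and Proposition~\ref{prop: schauder}) already provide.
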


The domain-averaging hyperbolic development $\overline{\mathcal{H}_{\lambda,\e,\Omega}}(z)$ on a general domain $\Omega$ does not have the sparseness property as in Lemma~\ref{lem: sparseness}, in contrast to the case of the unit disc. To overcome this difficulty, we shall symmetrize $\overline{\mathcal{H}_{\lambda,\e,\Omega}}(z)$  with a reflected version of it. More precisely, consider
\begin{align*}
\Omega_\ast := \{(x_1,- x_{2}, \cdots, -x_{d}):\, (x_{1}, x_{2}, \cdots, x_{d}) \in \Omega\}.
\end{align*}
Assume as before $0 \in \Omega$; thus, there is an $\epsilon >0$ such that $\mathbf{D}_{\epsilon}$, the disc of radius $\e$, lies in $\Omega$. Set
\begin{eqnarray*}
\widetilde{\mathcal{H}_{\lambda,\e,\Omega}}(z):= \frac{\overline{\mathcal{H}_{\lambda,\e,\Omega}}(z) + \overline{\mathcal{H}_{\lambda,\e,\Omega_{\ast}}}(z)}{2}\qquad \text{ for } z \in \mathbf{D}_{\epsilon}.
\end{eqnarray*}

\begin{lemma}[Sparseness of $\widetilde{\mathcal{H}_{\lambda,\e,\Omega}}$]\label{lem:domain_averaging_development_reflected_domain} $\widetilde{\mathcal{H}_{\lambda,\e,\Omega}}$ evaluated at ${\bf r} = (r, 0, \cdots, 0)$ for $r \in ]0, \epsilon[$ has all components being zero, except possibly for the first and the last ones. 
\end{lemma}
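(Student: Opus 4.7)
The plan is to adapt the reflection argument from Lemma~\ref{lem: sparseness} to the non-rotationally-symmetric setting; the role of symmetrization in $\widetilde{\mathcal{H}_{\lambda,\e,\Omega}}$ is to compensate for $\Omega \neq \Omega_\ast$. Let $R_0 := {\rm diag}(1,-1,\ldots,-1) \in O(d)$ be the reflection across the first coordinate axis, so that $R_0{\bf r} = {\bf r}$ and $R_0^2 = {\bf id}$.

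First I would establish the pointwise identity
\begin{equation*}
\mathcal{H}^{(k)}_{\lambda,\Omega}({\bf r}) + \mathcal{H}^{(k)}_{\lambda,\Omega_\ast}({\bf r}) = 0,\qquad k \in \{2,\ldots,d\},
\end{equation*}
valid for any $C^{2,\alpha}$-domain $\Omega$ containing $\B(0,\e)$. Let $B$ be a Brownian motion starting at ${\bf r}$; then $\widetilde{B} := R_0 B$ is again a Brownian motion from ${\bf r}$, and $\widetilde{B}_t \in \Omega$ if and only if $B_t \in \Omega_\ast$, so $\tau^{\widetilde{B}}_\Omega = \tau^{B}_{\Omega_\ast}$ almost surely. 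Lemma~\ref{lemma_hyperbolic_dev} expresses $h^{(k)}_\lambda$ as a sum over squared words in $\mathcal{E}^{(k)}_{2n}$ in which every index is paired except for the terminal index $k$; under the path reflection $\gamma \mapsto R_0\gamma$, each paired factor $\dd\gamma^{(i)}\,\dd\gamma^{(i)}$ is preserved while the terminal factor $\dd\gamma^{(k)}$ (with $k \geq 2$) changes sign, so that $h^{(k)}_\lambda(\lambda R_0\gamma) = -h^{(k)}_\lambda(\lambda\gamma)$. A dyadic piecewise-linear approximation of $B$, as in the proof of Lemma~\ref{lem: sparseness}, transfers this identity to the stopped Brownian signature. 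Computing $\mathbb{E}^{\bf r}\left[h^{(k)}_\lambda(\lambda \widetilde{B}_{[0,\tau^{\widetilde{B}}_\Omega]})\right]$ in two ways---once using $\widetilde{B} \stackrel{d}{=} B$, and once using the sign-flip identity evaluated at the equivalent stopping time $\tau^{B}_{\Omega_\ast}$---then yields the claimed cancellation.

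Second, I would average this identity over rotations. For each $R \in SO(d)$ one has $(\Omega_R)_\ast = R_0 R\,\Omega = (R_0 R R_0)\,\Omega_\ast$, and $\Psi : R \mapsto R_0 R R_0$ is a continuous group automorphism of the compact group $SO(d)$, so by uniqueness of the normalized Haar measure $\chi$, $\Psi$ preserves $\chi$. Applying the pointwise identity with $\Omega$ replaced by $\Omega_R$ and integrating against $\dd\chi(R)$ via the change of variable $S = \Psi(R)$ then gives
\begin{equation*}
\overline{\mathcal{H}_{\lambda,\e,\Omega}}^{(k)}({\bf r}) + \overline{\mathcal{H}_{\lambda,\e,\Omega_\ast}}^{(k)}({\bf r}) = 0,\qquad k \in \{2,\ldots,d\},
\end{equation*}
and hence $\widetilde{\mathcal{H}_{\lambda,\e,\Omega}}^{(k)}({\bf r}) = 0$ by definition. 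The main technical ingredient is the parity-driven sign-flip identity for words in $\mathcal{E}^{(k)}_{2n}$; the stopping-time substitution and the Haar-invariance step are routine once the sign-flip identity is in hand. I would also remark that the same sparseness can alternatively be read off directly from Lemma~\ref{Lemma_F_bar} by restricting to rotations $R \in SO(d)$ fixing ${\bf r}$ (which form an $SO(d-1)$ acting irreducibly on $\{e_1\}^\perp$, forcing components $2,\ldots,d$ to vanish), but the reflection-based argument above is the one in keeping with the role of $\Omega_\ast$ in the paper's framework.
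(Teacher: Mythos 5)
Your main argument is correct and is essentially the paper's own proof: reflect the Brownian motion by $R_0=\mathrm{diag}(1,-1,\ldots,-1)$, use the parity of the squared words in Lemma~\ref{lemma_hyperbolic_dev} (via bounded $1$-variation approximations) to obtain the pathwise cancellation of the $\mathcal{E}_{2n}^{(k)}$-coordinates for $k\in\{2,\ldots,d\}$, and then average over $SO(d)$ --- with the change of variables $R\mapsto R_0RR_0$ reconciling $(\Omega_R)_\ast$ with $(\Omega_\ast)_{R_0RR_0}$ being a step the paper's proof leaves implicit. Your closing remark is also sound for $d\geq 3$ (the $SO(d-1)$-stabilizer of $e_1$ applied to Lemma~\ref{Lemma_F_bar} already forces the middle components of the non-reflected $\overline{\mathcal{H}_{\lambda,\e,\Omega}}$ to vanish at $\mathbf{r}$), though it breaks down for $d=2$ where that stabilizer is trivial, so the reflection argument is the one that covers all cases.
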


\begin{proof}
The proof is similar to that of Lemma~\ref{lem: sparseness} for the unit disc case. Let $(B_t)_{t}$ denote the standard $d$-dimensional Brownian path.  
Consider the reflected path
\begin{equation}\label{reflected BM}
\hat{B}_t: = \left(B^{(1)}_{t}, - B^{(2)}_t, \cdots, -B^{(d)}_t\right)^\top,
\end{equation}
which is still a Brownian motion. If $B_t$ starts at $\mathbf{r} = (r, 0, \cdots, 0)^\top$, we may further require that $\hat{B}_{0} = \mathbf{r}$.
By the definition of $\widetilde{\mathcal{H}_{\lambda,\e,\Omega}}$, for $k \in \{2, \cdots, d\}$ we have
\begin{align*}
&\widetilde{\mathcal{H}_{\lambda,\e,\Omega}}^{(k)}(r,0, \cdots, 0)\\
&\quad :=\overline{\mathcal{H}_{\lambda, \e, \Omega}}^{(k)}(r) + \overline{\mathcal{H}_{\lambda, \e, \Omega_\ast}}^{(k)}(r)\\
&\quad = \int_{{\bf S}^{d-1}}\left\{\mathbb{E}^{r}\left[ \sum_{n \geq 0} \sum_{\omega \in \mathcal{E}_{2n}^{k}} \proj_{\omega}\left(S\left(B_{\tau_{\Omega_{\alpha}}}\right)\right)\right] + \mathbb{E}^{r}\left[ \sum_{n \geq 0} \sum_{\omega \in \mathcal{E}_{2n}^{k}} \proj_{\omega}\left(S\left(\hat{B}_{\tau_{\Omega_{\ast,\alpha}}}\right)\right)\right]\right\} \,\dd\mu(\alpha) \\
&\quad = 0.
\end{align*}
The last line holds as $\proj_{\omega}\left(S\left(B_{\tau_{\Omega_{\alpha}}}\right)\right) + \proj_{\omega}\left(S\left(\hat{B}_{\tau_{\Omega_{\ast,\alpha}}}\right)\right) = 0$ almost surely for $\omega \in \mathcal{E}_{2n}^{k}$.  \end{proof}

As before, write $\eta_\pm:= d+3 \pm \sqrt{d^2-10d+9}$, $\beta_\pm:=\sqrt{2d-6 \pm 2 \sqrt{d^2-10d+9}}$, and $r = |z|$. We have the following analogue of Theorem~\ref{thm: d-disk, PDE sol}.

\begin{theorem}\label{thm:H_reflected_domain_formula}
Let $\mathbf{D}_{\epsilon} \subset \Omega$ and $d \in \mathbb{N}\setminus\{1, 9\}$.  Denote the boundary condition for $\widetilde{\mathcal{H}_{\lambda,\e,\Omega}}$ at $r=\e$ as $\widetilde{\mathcal{H}_{\lambda,\e,\Omega}}(\epsilon) = [\mathfrak{p}, 0, \cdots, 0, \mathfrak{q}]^{\top}$. Then there exists $\lambda_\star >0$ such that for every $\lambda \in ]0, \lambda_\star[$,
\begin{align}\label{new, B}
 \widetilde{\mathcal{H}_{\lambda,\e,\Omega}}(z)  = \left[
\frac{z^{1}}{r}h_{ \lambda}^{(1)}(r),\,\frac{z^{2}}{r}h_{ \lambda}^{(1)}(r),\, \cdots, \, \frac{z^{d}}{r}h_{ \lambda}^{(1)}(r),\,h^{(d+1)}_{\lambda}(r) \right]^\top.
\end{align}
The functions $h^{(1)}_\lambda$, $h^{(d+1)}_\lambda$ and the Wr\"{o}nskian determinant $\mathcal{W}_\e$ are given by
\begin{align*}
&h^{(1)}_\lambda(r) = \frac{\left(\frac{r}{\e}\right)^{1-\frac{d}{2}}}{\mathcal{W}_\e}\cdot\Bigg\{\left[\eta_-\beta_-J_{\frac{d}{2}-1}\left(\frac{\lambda\beta_-\e}{2}\right)J_{\frac{d}{2}}\left(\frac{\lambda\beta_+r}{2}\right) -\eta_+\beta_+ J_{\frac{d}{2}}\left(\frac{\lambda\beta_-r}{2}\right)J_{\frac{d}{2}-1}\left(\frac{\lambda\beta_+\e}{2}\right)\right]\mathfrak{p} \\
&\quad  +\left[ - J_{\frac{d}{2}}\left(\frac{\lambda\beta_-\e}{2}\right)J_{\frac{d}{2}}\left(\frac{\lambda\beta_+ r}{2}\right)+  J_{\frac{d}{2}}\left(\frac{\lambda\beta_-r}{2}\right)J_{\frac{d}{2}}\left(\frac{\lambda\beta_+\e}{2}\right)\right]8d\mathfrak{q}\Bigg\};\\
&h^{(d+1)}_\lambda(r) =\frac{\left(\frac{r}{\e}\right)^{1-\frac{d}{2}}}{\mathcal{W}_\e}\cdot\Bigg\{\left[J_{\frac{d}{2}-1}\left(\frac{\lambda\beta_-\e}{2}\right)J_{\frac{d}{2}-1}\left(\frac{\lambda\beta_+r}{2}\right) - J_{\frac{d}{2}}\left(\frac{\lambda\beta_-r}{2}\right)J_{\frac{d}{2}-1}\left(\frac{\lambda\beta_+\e}{2}\right)\right]\frac{\mathfrak{p} \beta_{+}\beta_{-} \eta_{+}\eta_{-}}{8d} \\
&\quad + \left[  \eta_{-}\beta_{-} J_{\frac{d}{2}}\left(\frac{\lambda\beta_-\e}{2}\right)J_{\frac{d}{2}-1}\left(\frac{\lambda\beta_+ r}{2}\right)- \eta_{+}\beta_{+}J_{\frac{d}{2}}\left(\frac{\lambda\beta_+\e}{2}\right) J_{\frac{d}{2}-1}\left(\frac{\lambda\beta_-r}{2}\right) \right]\mathfrak{q}\Bigg\};\\
&\mathcal{W}_\e = \det \begin{bmatrix}
J_{\frac{d}{2}}\left( \frac{\lambda  \beta_+ \e}{2}\right) & J_{\frac{d}{2}}\left( \frac{\lambda  \beta_- \e}{2}\right)\\
\eta_+ \beta_+  J_{\frac{d}{2}-1} \left( \frac{\lambda \beta_+ \e}{2}\right) & \eta_- \beta_- J_{\frac{d}{2}-1} \left( \frac{\lambda \beta_- \e}{2}\right)
\end{bmatrix}.
\end{align*}
\end{theorem}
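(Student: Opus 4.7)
The plan is to follow the derivation of Theorem~\ref{thm: d-disk, PDE sol} very closely, now applied to the symmetrised object $\widetilde{\mathcal{H}_{\lambda,\e,\Omega}}$ on the inscribed ball $\mathbf{D}_\e$ rather than to $\mathcal{H}_{\lambda,\mathbf{D}}$ on the full unit ball. Three ingredients make the transfer essentially routine. First, by Theorem~\ref{Thm:PDE_general_domain_hyperbolic_development}, both $\overline{\mathcal{H}_{\lambda,\e,\Omega}}$ and $\overline{\mathcal{H}_{\lambda,\e,\Omega_\ast}}$ satisfy on $\mathbf{B}(0,\e)$ precisely the PDE verified by $\mathcal{H}_{\lambda,\mathbf{D}}$ in Lemma~\ref{lem: PDE_H_d_ball}; so does the average $\widetilde{\mathcal{H}_{\lambda,\e,\Omega}}$. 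Second, Lemma~\ref{lem:domain_averaging_development_reflected_domain} supplies the sparseness needed to collapse $\widetilde{\mathcal{H}_{\lambda,\e,\Omega}}$ into a pair of scalar radial functions $h^{(1)}_\lambda,h^{(d+1)}_\lambda$. Third, Corollary~\ref{cor: differentiability} ensures $C^2$-regularity at the origin, which will rule out $Y_\nu$-branches as in the unit-ball case.

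I would first combine this sparseness with the rotational covariance
\begin{equation*}
\widetilde{\mathcal{H}_{\lambda,\e,\Omega}}\!\big(R(r,0,\ldots,0)^\top\big)=(R\oplus{\bf id})\,\widetilde{\mathcal{H}_{\lambda,\e,\Omega}}\!\big((r,0,\ldots,0)^\top\big),\qquad R\in SO(d),
\end{equation*}
inherited from Lemma~\ref{Lemma_M_bar} upon averaging over both $\Omega$ and $\Omega_\ast$, to arrive at the Ansatz displayed in the statement in which only $h^{(1)}_\lambda$ and $h^{(d+1)}_\lambda$ survive. Substituting this Ansatz into the PDE of Theorem~\ref{Thm:PDE_general_domain_hyperbolic_development} and repeating the spherical-Laplacian computation that yielded Proposition~\ref{propn: ODE for hyperbolic dev in d-dim} produces \emph{verbatim} the coupled radial ODE system \eqref{ODE one, d-dim}--\eqref{ODE two, d-dim}. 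The only genuinely new feature is the Dirichlet data at $r=\e$: now $h^{(1)}_\lambda(\e)=\mathfrak{p}$ and $h^{(d+1)}_\lambda(\e)=\mathfrak{q}$, instead of $0$ and $1$.

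Next, I would invoke the explicit four-parameter general solution~\eqref{new-a}--\eqref{new-b} already recorded within the proof of Theorem~\ref{thm: d-disk, PDE sol}. The singular behaviour $Y_{\frac{d}{2}-1}(s),Y_{\frac{d}{2}}(s)\to -\infty$ as $s\searrow 0$ forces $C_3=C_4=0$, since $\widetilde{\mathcal{H}_{\lambda,\e,\Omega}}$ is $C^2$ at the origin. The remaining boundary conditions at $r=\e$ then reduce to the $2\times 2$ linear system
\begin{equation*}
\begin{bmatrix} J_{\frac{d}{2}}\!\left(\tfrac{\lambda\beta_+\e}{2}\right) & J_{\frac{d}{2}}\!\left(\tfrac{\lambda\beta_-\e}{2}\right) \\[2pt] \eta_+\beta_+ J_{\frac{d}{2}-1}\!\left(\tfrac{\lambda\beta_+\e}{2}\right) & \eta_-\beta_- J_{\frac{d}{2}-1}\!\left(\tfrac{\lambda\beta_-\e}{2}\right)\end{bmatrix}\!\begin{bmatrix} C_1\\ C_2\end{bmatrix}=\e^{\frac{d}{2}-1}\!\begin{bmatrix} \mathfrak{p}\\ -8d\mathfrak{q}\end{bmatrix},
\end{equation*}
whose coefficient determinant is precisely the Wr\"{o}nskian $\mathcal{W}_\e$ of the statement.

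Finally, Cramer's rule gives $C_1,C_2$ in closed form; plugging these back into $h^{(1)}_\lambda$ and $h^{(d+1)}_\lambda$ and regrouping the $\mathfrak{p}$- and $\mathfrak{q}$-contributions produces the two displayed formulas, with the prefactor $(r/\e)^{1-d/2}$ arising from $r^{1-d/2}\cdot\e^{d/2-1}$. The constant $\lambda_\star>0$ can be taken to be the smallest positive zero of $\lambda\mapsto\mathcal{W}_\e[\lambda]$ below the convergence threshold $\lambda^\star$ of Eq.~\eqref{lambda star} (or $\lambda^\star$ itself if no such zero exists), which guarantees invertibility of the linear system throughout $[0,\lambda_\star[$ and consistency with the unique classical PDE solution. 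I expect no conceptual obstacle; the main bookkeeping item is to verify that the algebraic expansion of Cramer's rule reproduces \emph{verbatim} the sign pattern and the multiplicative factor $\beta_+\beta_-\eta_+\eta_-/(8d)$ appearing in the $h^{(d+1)}_\lambda$ formula of the statement.
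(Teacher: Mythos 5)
Your proposal is correct and follows essentially the same route as the paper's own proof: transfer of the PDE to the average via Theorem~\ref{Thm:PDE_general_domain_hyperbolic_development}, reduction to the two-component radial ODE system of Proposition~\ref{propn: ODE for hyperbolic dev in d-dim} using the sparseness Lemma~\ref{lem:domain_averaging_development_reflected_domain}, elimination of the $Y_\nu$-branches by regularity at the origin, and determination of $C_1,C_2$ from the boundary data at $r=\e$ via the $2\times 2$ system whose determinant is $\mathcal{W}_\e$. Your version of the linear system is just the paper's rescaled by $\e^{d/2-1}$ with the second row multiplied by $-8d$, so the two are equivalent.
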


\begin{remark}
For $2\leq d \leq 8$ we can further simplify $h^{(1)}_\lambda$ and $h^{(d+1)}_\lambda$:
\begin{eqnarray*}
&h^{(1)}_\lambda(r) = \left(\frac{r}{\e}\right)^{1-\frac{d}{2}} \left\{ \frac{\mathfrak{p}\cdot  {\rm Im}\,\left[\eta_-\beta_-J_{\frac{d}{2}-1}\left(\frac{\lambda\beta_-\e}{2}\right)J_{\frac{d}{2}}\left(\frac{\lambda\beta_+r}{2}\right)\right]  +8d\mathfrak{q}\cdot {\rm Im}\,\left[J_{\frac{d}{2}}\left(\frac{\lambda\beta_-\e}{2}\right)J_{\frac{d}{2}}\left(\frac{\lambda\beta_+ r}{2}\right)\right] }{{\rm Im}\left[\eta_-\beta_-J_{\frac{d}{2}-1}\left(\frac{\lambda\beta_-\e}{2}\right) J_{\frac{d}{2}}\left(\frac{\lambda\beta_+\e}{2}\right) \right]}\right\};\\
&h^{(d+1)}_\lambda(r) =\left(\frac{r}{\e}\right)^{1-\frac{d}{2}} \left\{ \frac{\frac{1}{8d}\mathfrak{p}\cdot  {\rm Im}\,\left[\eta_{+}\beta_{+}\eta_-\beta_-J_{\frac{d}{2}-1}\left(\frac{\lambda\beta_-\e}{2}\right)J_{\frac{d}{2}-1}\left(\frac{\lambda\beta_+r}{2}\right)\right]  +\mathfrak{q}\cdot {\rm Im}\,\left[ \eta_{-}\beta_{-}J_{\frac{d}{2}}\left(\frac{\lambda\beta_+\e}{2}\right)J_{\frac{d}{2}-1}\left(\frac{\lambda\beta_- r}{2}\right)\right] }{{\rm Im}\left[\eta_-\beta_-J_{\frac{d}{2}-1}\left(\frac{\lambda\beta_-\e}{2}\right) J_{\frac{d}{2}}\left(\frac{\lambda\beta_+\e}{2}\right) \right]}\right\}.
\end{eqnarray*}
\end{remark}

\begin{proof}
Since both $\overline{\mathcal{H}_{\lambda, \e, \Omega}}$ and $\overline{\mathcal{H}_{\lambda, \e, \Omega_\ast}}$ satisfy the same PDE as in Theorem~\ref{Thm:PDE_general_domain_hyperbolic_development}, $\widetilde{\mathcal{H}_{\lambda,\e,\Omega}}$ satisfies also the same PDE on  $\mathbf{B}(0,\e) \subset \bigcap_{S \in SO(d)} \Omega_S$:
\begin{equation*}
\Delta \widetilde{\mathcal{H}_{\lambda,\e, \Omega}}(z)=-2\lambda\sum_{i=1}^{d}He_{i}\frac{\partial \widetilde{\mathcal{H}_{\lambda,\e,\Omega}}(z)}{\partial z^{i}}-\lambda^{2}\left(\sum_{i=1}^{d}\left(He_{i}\right)^{2}\right)\widetilde{\mathcal{H}_{\lambda,\e,\Omega}}(z)\qquad \text{ for each } z \in \mathbf{B}(0,\e).
\end{equation*}

The sparseness Lemma~\ref{lem:domain_averaging_development_reflected_domain} implies that components of $\overline{\mathcal{H}_{\lambda,\e,\Omega}}$ are all zero expect for the first and the last ones, just as in the case of unit disc. Let $\widetilde{\mathcal{H}_{\lambda,\e,\Omega}}(r) = \left(h^{(1)}_\lambda(r), 0, \cdots 0, h^{(d+1)}_\lambda(r)\right)^\top$. It then satisfies the PDE system \eqref{ODE one, d-dim},  \eqref{ODE two, d-dim} in Proposition~\ref{propn: ODE for hyperbolic dev in d-dim}, which differs from the unit disc case only in terms of boundary data.

Adapting the computations in the proof for Theorem~\ref{thm: d-disk, PDE sol}, we get
\begin{eqnarray*}
&&\widetilde{\mathcal{H}^{(1)}_{\lambda,\e, \Omega}}(r) = r^{1-\frac{d}{2}  }\left(C_{1} J_{\frac{d}{2}}\left( \frac{\lambda  \beta_+ r}{2}\right) +  C_{2}J_{\frac{d}{2}}\left( \frac{\lambda \beta_- r}{2}\right)\right),\\
&&\widetilde{\mathcal{H}^{(d+1)}_{\lambda,\e, \Omega}}(r) = - \frac{1}{8d}r^{1-\frac{d}{2}  }\left\{C_1 \eta_+ \beta_+  J_{\frac{d}{2}-1} \left( \frac{\lambda \beta_+ r}{2}\right) + C_2 \eta_-\beta_-  J_{\frac{d}{2}-1}\left( \frac{\lambda \beta_- r}{2}\right)\right\}. 
\end{eqnarray*}
for $C_1$ and $C_2$ to be specified. One may determine these constants from the boundary condition at $r=\e$. Indeed, we infer from Eq.~\eqref{bc, d dim general domain--new} that
\begin{equation}
\e^{1-\frac{d}{2}}
\begin{bmatrix}
J_{\frac{d}{2}}\cdot\left( \frac{\lambda  \beta_+ \e}{2}\right) & J_{\frac{d}{2}}\left( \frac{\lambda  \beta_- \e}{2}\right)\\
-\frac{1}{8d}\eta_+ \beta_{+}   J_{\frac{d}{2}-1} \left( \frac{\lambda \beta_+ \e}{2}\right) & -\frac{1}{8d}\eta_- \beta_- J_{\frac{d}{2}-1} \left( \frac{\lambda \beta_- \e}{2}\right)
\end{bmatrix}\cdot\begin{bmatrix}
C_1\\
C_2
\end{bmatrix} = \begin{bmatrix}
\mathfrak{p}\\
\mathfrak{q}
\end{bmatrix}.
\end{equation}

This algebraic system can be explicitly solved:
\begin{eqnarray*}
&&C_1 = \frac{\e^{\frac{d}{2}-1}}{\mathcal{W}_\e} \left\{\eta_-\beta_-J_{\frac{d}{2}-1}\left( \frac{\lambda\beta_-\e}{2} \right)\mathfrak{p} - 8d J_{\frac{d}{2}}\left( \frac{\lambda\beta_-\e}{2} \right)\mathfrak{q}\right\},\\
&&C_2 = \frac{\e^{\frac{d}{2}-1}}{\mathcal{W}_\e} \left\{-\eta_+\beta_+ J_{\frac{d}{2}-1}\left( \frac{\lambda\beta_+\e}{2} \right)\mathfrak{p} + 8d J_{\frac{d}{2}}\left( \frac{\lambda\beta_+\e}{2} \right)\mathfrak{q}\right\},
\end{eqnarray*}
where (comparing with Eq.~\eqref{wronskian, purely imaginary})
\begin{align*}
\mathcal{W}_\e &:= \det \begin{bmatrix}
J_{\frac{d}{2}}\left( \frac{\lambda  \beta_+ \e}{2}\right) & J_{\frac{d}{2}}\left( \frac{\lambda  \beta_- \e}{2}\right)\\
\eta_+ \beta_+  J_{\frac{d}{2}-1} \left( \frac{\lambda \beta_+ \e}{2}\right) & \eta_- \beta_- J_{\frac{d}{2}-1} \left( \frac{\lambda \beta_- \e}{2}\right)
\end{bmatrix}\\
& = 2i\,{\rm Im}\left\{\eta_-\beta_-\left[J_{\frac{d}{2}-1}\left(\frac{\lambda\beta_+\e}{2}\right)\right]^\dagger J_{\frac{d}{2}}\left(\frac{\lambda\beta_+\e}{2}\right) \right\}.
\end{align*}
This completes the proof.  \end{proof}

Finally, we arrive at the stage of proving Theorem~\ref{thm: main} (reproduced below).

\begin{theorem*}
Let $\Omega$ be a bounded  $C^{2,\alpha}$-domain in $\R^d$, for some $d \in \{2, \ldots, 8\}$ and $\alpha > 0$. The expected signature $\Phi$ of a Brownian motion stopped upon the first exit time from $\Omega$ has finite radius of convergence everywhere on $\Omega$. 
\end{theorem*}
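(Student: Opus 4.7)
\textbf{Proof sketch for Theorem~\ref{thm: main}, case $3\leq d\leq 8$.} The plan is to carry out a contradiction argument parallel to the $d=2$ case in \S\ref{sec: proof of main thm}, but now exploiting the sparseness Lemma~\ref{lem:domain_averaging_development_reflected_domain} for the \emph{reflected} domain-averaging development $\widetilde{\mathcal{H}_{\lambda,\e,\Omega}}$ and the closed-form solution in Theorem~\ref{thm:H_reflected_domain_formula}. Fix an arbitrary $x \in \Omega$; translating, assume $x=0$ and choose $\e \in ]0,1[$ so that $\overline{\B(0,\e)} \subset \bigcap_{R \in SO(d)} \Omega_R$. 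Assume for contradiction that $\Phi_\Omega$ has infinite radius of convergence at $0$. Then by Lemma~\ref{lemma_FRC} (applied to $M=H$ and to the reflected object), $\widetilde{\mathcal{H}_{\lambda,\e,\Omega}}$ is entire in $\lambda$.

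By Lemma~\ref{lem:domain_averaging_development_reflected_domain} only the first and the $(d+1)^{\text{st}}$ components of $\widetilde{\mathcal{H}_{\lambda,\e,\Omega}}(r,0,\ldots,0)$ are nontrivial; denote its boundary trace at $r=\e$ by $[\mathfrak{p},0,\ldots,0,\mathfrak{q}]^\top$. Lemma~\ref{lem: hyperbolic dev} and Corollary~\ref{cor: new} applied to the hyperbolic development (which, crucially, survives the two averagings defining $\widetilde{\mathcal{H}}$) yield the geometric constraints $\mathfrak{q}\geq 1$ and $|\mathfrak{p}|\leq \mathfrak{q}$. Theorem~\ref{thm:H_reflected_domain_formula} then gives an explicit expression for $h^{(d+1)}_\lambda(r)$ whose denominator is precisely the rescaled Wr\"{o}nskian $\mathcal{W}_\e[\lambda]=\mathcal{W}[\lambda\e]$ analyzed in Theorem~\ref{thm: d-dim, finite ROC}.

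Step three is to locate the pole. From Table~\ref{table:arb d} and the intermediate value theorem applied to $\mathring\Theta^n\pm\mathrm{Err}^n_\Theta$ (with $n=7$), there exists $\mu_\star \in\, ]2.5,3[$ with $\Theta(\mu_\star)=0$; hence $\lambda_\star:=\mu_\star/\e$ is a zero of $\mathcal{W}_\e$. The residue of $h^{(d+1)}_\lambda(0)$ at $\lambda_\star$ is a linear combination of $\mathfrak{p}$ and $\mathfrak{q}$; passing to the limit $r\searrow 0$ via $J_\nu(z)\sim (z/2)^\nu/\Gamma(\nu+1)$ and using the complex conjugate identities $\eta_-=\eta_+^\dagger$, $\beta_-=\beta_+^\dagger$, the numerator simplifies to
\begin{align*}
\mathcal{N}_d(\mu_\star;\mathfrak{p},\mathfrak{q})
 := \mathfrak{p}\cdot\mathrm{Im}\!\left\{\tfrac{\eta_+\beta_+\eta_-\beta_-}{8d}\tfrac{1}{\Gamma(d/2)}\!\left(\tfrac{\mu_\star\beta_+}{4}\right)^{d/2-1}\!\left[J_{d/2-1}\!\left(\tfrac{\mu_\star\beta_+}{2}\right)\right]^\dagger\right\}
+ \mathfrak{q}\cdot\mathrm{Im}\!\left\{\eta_-\beta_-\tfrac{1}{\Gamma(d/2)}\!\left(\tfrac{\mu_\star\beta_+}{4}\right)^{d/2-1}\!\left[J_{d/2}\!\left(\tfrac{\mu_\star\beta_+}{2}\right)\right]^\dagger\right\}.
\end{align*}
Since $\mathcal{N}_d$ is linear in $(\mathfrak{p},\mathfrak{q})$ on the triangular region $\mathfrak{q}\geq 1,\,|\mathfrak{p}|\leq\mathfrak{q}$, it is enough to verify non-vanishing at the two extremal rays $\mathfrak{p}=\pm\mathfrak{q}$. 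One writes $J_{d/2-1}$ and $J_{d/2}$ as $\maino+\remo$ and $\maine+\reme$ respectively, bounds the remainders uniformly for $\mu\in[2.5,3]$ via Lemma~\ref{lem: bessel}, and confirms $\mathcal{N}_d\neq 0$ by a finite numerical check with $\textsc{Maple}^{\text{\textregistered}}$, producing a table analogous to Table~\ref{table:arb d_ball_numerator} for each $d\in\{3,\ldots,8\}$.

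The resulting pole of $h^{(d+1)}_\lambda(0)$ at $\lambda_\star$ contradicts the assumed infinite radius of convergence of $\widetilde{\mathcal{H}_{\lambda,\e,\Omega}}$, completing the argument. The main obstacle, as in the two-dimensional case, lies not in the abstract setup but in the effective Bessel-function estimates: one must choose the truncation order $n$ large enough that $\mathrm{Err}^n_\Theta$ and $\mathrm{Err}^n_{\mathcal{N}_d}$ are controlled uniformly on $[2.5,3]$ across all dimensions $d\in\{3,\ldots,8\}$ and along both extremal rays $\mathfrak{p}=\pm\mathfrak{q}$, simultaneously. The fact that the dimensional constants $\eta_\pm,\beta_\pm$ remain genuinely complex precisely for $3\leq d\leq 8$ (since $d^2-10d+9<0$ in this range) is what makes the imaginary parts non-degenerate and ultimately permits the numerical verification; the degeneracy at $d=9$, where $\mathcal{W}\equiv 0$, signals that a different solution basis would be required (compare Appendix~A, Lemma~\ref{lem:d=9case}) and explains the dimension ceiling in the theorem.
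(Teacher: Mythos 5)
Your proposal follows essentially the same route as the paper: reflect the domain to obtain $\Omega_\ast$, average to get the sparse object $\widetilde{\mathcal{H}_{\lambda,\e,\Omega}}$, solve the resulting radial ODE system in closed form via Bessel functions (Theorem~\ref{thm:H_reflected_domain_formula}), locate a zero $\lambda_\star \in \left]\tfrac{2.5}{\e},\tfrac{3}{\e}\right[$ of the Wr\"{o}nskian denominator, and use the hyperboloid constraints $\mathfrak{q}\geq 1$, $\mathfrak{q}\geq|\mathfrak{p}|$ to force the numerator of $h^{(d+1)}_\lambda(0)$ to be nonzero there.

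There is, however, one step that as written does not work: the reduction of the numerator check to ``the two extremal rays $\mathfrak{p}=\pm\mathfrak{q}$.'' Writing the numerator as $\mathcal{N}_d = \mathfrak{q}\,\mathcal{N}^{(1)} + \mathfrak{p}\,\mathcal{N}^{(2)}$ and parametrising $\mathfrak{p}=t\mathfrak{q}$ with $t\in[-1,1]$, you get $\mathcal{N}_d = \mathfrak{q}\left(\mathcal{N}^{(1)} + t\,\mathcal{N}^{(2)}\right)$, an affine function of $t$. Non-vanishing at $t=\pm 1$ does \emph{not} imply non-vanishing on all of $[-1,1]$: if $\mathcal{N}^{(1)}+\mathcal{N}^{(2)}$ and $\mathcal{N}^{(1)}-\mathcal{N}^{(2)}$ have opposite signs, there is an interior zero (e.g.\ $\mathcal{N}^{(2)}=2$, $\mathcal{N}^{(1)}=1$ vanishes at $t=-1/2$). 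What you actually need is that the values at the two extremal rays are nonzero \emph{and of the same sign}, which is equivalent to $\left|\mathcal{N}^{(1)}\right| > \left|\mathcal{N}^{(2)}\right|$ uniformly on $[2.5,3]$ --- and this is precisely the claim~\eqref{new-claim to conclude} that the paper isolates and verifies in Lemma~\ref{Lem_d_domain_numerator} (by showing $\mathcal{N}^{(1)}<0$, $\mathcal{N}^{(2)}>0$, and $\mathcal{N}^{(1)}+\mathcal{N}^{(2)}<0$ with explicit Bessel truncation errors). Your numerical verification would presumably reveal the signs and close this gap, but the stated logical reduction must be strengthened accordingly. The rest of the sketch (sparseness via reflection, persistence of the hyperboloid constraints under the two averagings, the role of $d^2-10d+9<0$ in keeping $\eta_\pm,\beta_\pm$ genuinely complex, and the degeneracy at $d=9$) matches the paper's argument.
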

\begin{proof}
Our arguments will essentially be an adaptation of the proof of Theorem~\ref{thm: d-dim, finite ROC}. We shall only indicate necessary modifications.

First of all, Step~1 in the proof of Theorem~\ref{thm: main} ($d=2$ case) in \S\ref{sec: proof of main thm} remains valid for $\Omega \Subset \R^d$. This is the only place that we need the $C^{2,\alpha}$-regularity of the domain.  We conclude that  \begin{align*}
\left\|\proj_n \Phi\right\|_{C^2(\Omega)} \leq C^n
\end{align*}
for $C$ depending only on the geometry of $\Omega$, the H\"{o}lder index $\alpha$, and the dimension $d$. 

Consider now $\widetilde{\mathcal{H}_{\lambda, \e, \Omega}} = \left(h^{(1)}_{\lambda}, \cdots, h^{(d+1)}_{\lambda}\right)^{\top}$. We show that there  exists $\lambda_{\star} > 0$ for which $\lim_{\lambda \rightarrow \lambda_{\star} }h^{(d+1)}_{\lambda}(0) = \infty$. Label as before the boundary conditions at $r=\e$ by
\begin{equation}\label{bc, d dim general domain--new}
h^{(1)}_\lambda(\e) = \mathfrak{p} \quad \text{ and } \quad h^{(d+1)}_\lambda(\e)=\mathfrak{q},
\end{equation}
where, as a precaution, $\mathfrak{p}$ and $\mathfrak{q}$ depend in general on $\lambda$ and $\e$. 

By Theorem~\ref{thm:H_reflected_domain_formula} and the ensuing remark,   we have the closed-form expression
\begin{eqnarray*}
h^{(d+1)}_\lambda(r) &=\left(\frac{r}{\e}\right)^{1-\frac{d}{2}} \left\{ \frac{\frac{1}{8d}\mathfrak{p}\cdot  {\rm Im}\,\left[\eta_{+}\beta_{+}\eta_-\beta_-J_{\frac{d}{2}-1}\left(\frac{\lambda\beta_-\e}{2}\right)J_{\frac{d}{2}-1}\left(\frac{\lambda\beta_+r}{2}\right)\right]  +\mathfrak{q}\cdot {\rm Im}\,\left[ \eta_{-}\beta_{-}J_{\frac{d}{2}}\left(\frac{\lambda\beta_+\e}{2}\right)J_{\frac{d}{2}-1}\left(\frac{\lambda\beta_- r}{2}\right)\right] }{{\rm Im}\left[\eta_-\beta_-J_{\frac{d}{2}-1}\left(\frac{\lambda\beta_-\e}{2}\right) J_{\frac{d}{2}}\left(\frac{\lambda\beta_+\e}{2}\right) \right]}\right\}
\end{eqnarray*}
for $d \in \{2, \dots, 8\}$. Set $h^{(d+1)}_\lambda(0):= \lim_{r\searrow 0} h^{(d+1)}_\lambda(r)$; by properties of the Bessel function,
\begin{align*}
h^{(d+1)}_\lambda(0) =\frac{\mathcal{N}_{\e}(\lambda)}{\Theta_{\e}(\lambda)}
\end{align*}
with the numerator
\begin{align}\label{NNN}
\mathcal{N}_{\e}(\lambda) &= \underset{r \rightarrow 0}{\lim}\left(\frac{r}{\e}\right)^{1-\frac{d}{2}}  \Bigg\{\frac{\mathfrak{p}}{8d}\cdot  {\rm Im}\,\left[\eta_{+}\beta_{+}\eta_-\beta_-J_{\frac{d}{2}-1}\left(\frac{\lambda\beta_-\e}{2}\right)J_{\frac{d}{2}-1}\left(\frac{\lambda\beta_+r}{2}\right)\right] \nonumber\\
&\qquad\qquad\qquad\qquad+\mathfrak{q}\cdot {\rm Im}\,\left[ \eta_{-}\beta_{-}J_{\frac{d}{2}}\left(\frac{\lambda\beta_+\e}{2}\right)J_{\frac{d}{2}-1}\left(\frac{\lambda\beta_- r}{2}\right)\right]\Bigg\}\nonumber\\
&=\mathfrak{q} \cdot \underbrace{\frac{1}{\Gamma(\frac{d}{2})}\cdot{\rm Im}\,\left[\eta_{-}\beta_{-}J_{\frac{d}{2}}\left(\frac{\lambda\beta_+\e}{2}\right)\left(\frac{\lambda\beta_- \e}{4}\right)^{\frac{d}{2}-1}\right]}_{\mathcal{N}^{(1)}_{\e}(\lambda)}  \nonumber\\
&\qquad\qquad\qquad\qquad+\mathfrak{p}\cdot  \underbrace{\frac{1}{\Gamma(\frac{d}{2})}\cdot{\rm Im}\,\left[\frac{1}{8d} \eta_{+}\beta_{+}\eta_-\beta_-J_{\frac{d}{2}-1}\left(\frac{\lambda\beta_-\e}{2}\right)\left(\frac{\lambda\beta_+\e}{4}\right)^{\frac{d}{2}-1} \right]}_{\mathcal{N}^{(2)}_{\e}(\lambda)}
\end{align}
and the denominator $$\Theta_{\e}(\lambda) = {\rm Im}\left[\eta_-\beta_-J_{\frac{d}{2}-1}\left(\frac{\lambda\beta_-\e}{2}\right) J_{\frac{d}{2}}\left(\frac{\lambda\beta_+\e}{2}\right) \right].$$

From Lemma~\ref{lem: hyperbolic dev}  and Corollary~\ref{cor: new} we infer that
\begin{equation*}
\left[h^{(1)}_\lambda(r), 0, \cdots, 0 , h^{(d+1)}_\lambda(r)\right]^\top \in \left\{ x \in \mathbb{R}^{d+1} :\,  x^{d+1}\geq \left|x^1\right|\text{ and } x^{d+1} \geq 1 \right\}\qquad\text{for all } r \in ]0,\e].
\end{equation*} In particular, when $\mathfrak{p}\neq 0$ we have $\frac{\mathfrak{q}}{|\mathfrak{p}|} \geq 1$ and $ \mathfrak{q} \geq 1$ in the almost sure sense, thanks to Eq.~\eqref{bc, d dim general domain--new}.

The proof of Theorem~\ref{thm: d-dim, finite ROC} shows that there is a root $\lambda_\star$ for the denominator $\Theta_{\e}$, with $\lambda_\star \in \left]\frac{2.5}{\e}, \frac{3}{\e} \right[$. It thus suffices to check that the numerator $\mathcal{N}_{\e}(\lambda)$ is non-vanishing.

\noindent
{\bf Case 1: $\mathfrak{p}=0$.} Then we have $$\mathcal{N}_{\e}(\lambda) = \mathfrak{q}\cdot {\rm Im}\left[ \eta_{-}\beta_{-}J_{\frac{d}{2}}\left(\frac{\lambda\beta_+\e}{2}\right)\left(\frac{\lambda\beta_- \e}{4}\right)^{\frac{d}{2}-1}\right],$$ which is essentially the same as the numerator for the unit ball case (modulo scaling). By the proof of Theorem~\ref{thm: d-dim, finite ROC}, for $\mu \in ]2.5, 3[$ one has $\eta_{-}\beta_{-}J_{\frac{d}{2}}\left(\frac{\mu\beta_+}{2}\right)\left(\frac{\mu\beta_- }{4}\right)^{\frac{d}{2}-1}\neq 0$. Using a change of variable and the geometrical constraint $\mathfrak{q}\geq 1$, one may easily show that  $\mathcal{N}_{\e}(\lambda)$ is bounded away from zero for all $\lambda \in \left]\frac{2.5}{\e}, \frac{3}{\e}\right[$ . 

\noindent
{\bf Case 2: $\mathfrak{p}\neq 0$.} Since the 2-dimensional case has been treated separately, we assume from now on $d \in \{3,4,\ldots,8\}$. Once we establish the following \emph{claim}:
\begin{equation}\label{new-claim to conclude}
\left| \mathcal{N}_{\e}^{(1)}(\lambda) \right| -\left|  \mathcal{N}_{\e}^{(2)}(\lambda)\right| > 0 \qquad\text{ for all } \lambda \in \left[\frac{2.5}{\e}, \frac{3}{\e}\right],
\end{equation}
we can immediately conclude the proof using the triangle inequality plus the geometrical constraint $\mathfrak{q}\geq |\mathfrak{p}|>0$. Indeed, it holds that
\begin{align*}
\left|\mathcal{N}_{\e}(\lambda)\right| &=  \left|\mathfrak{q} \mathcal{N}_{\e}^{(1)}(\lambda) + \mathfrak{p} \mathcal{N}_{\e}^{(2)}(\lambda)\right|\\
&=\mathfrak{p} \left|\frac{\mathfrak{q}}{\mathfrak{p}} \mathcal{N}_{\e}^{(1)}(\lambda) +  \mathcal{N}_{\e}^{(2)}(\lambda)\right| \\
&\geq  \mathfrak{p} \left(\left|\frac{\mathfrak{q}}{\mathfrak{p}} \mathcal{N}_{\e}^{(1)}(\lambda) \right| -\left|  \mathcal{N}_{\e}^{(2)}(\lambda)\right| \right)\\
&\geq  \mathfrak{p} \left(\left| \mathcal{N}_{\e}^{(1)}(\lambda) \right| -\left|  \mathcal{N}_{\e}^{(2)}(\lambda)\right| \right)\\ 
&> 0.
\end{align*}

The proof of \emph{claim}~\eqref{new-claim to conclude} will be presented in the appendix. See Lemma~\ref{Lem_d_domain_numerator}.  \end{proof}

\subsection{Concluding remarks}
We have proved in this paper the finiteness of radius of convergence of the expected signature for stopped Brownian motions in dimensions $2 \leq d \leq 8$. The restriction on the range of  $d$ mainly arises from the ODE system for the hyperbolic development $\mathcal{H}_{\lambda,\Omega}$ derived in Proposition~\ref{propn: ODE for hyperbolic dev in d-dim} (see also Theorem~\ref{thm: d-disk, PDE sol}). This ODE has explicit solution involving key parameters $\eta_\pm:= d+3 \pm \sqrt{d^2-10d+9}$ and $\beta_\pm:=\sqrt{2d-6 \pm 2 \sqrt{d^2-10d+9}}$, which are real for $d \geq 9$, whence the solutions demonstrate qualitatively different behaviours. We shall leave this point for future investigations.

\appendix
\section{ }
\numberwithin{equation}{section}

In this appendix, we first collect a (crude) upper bound for the Bessel functions:

\begin{lemma}\label{lem: bessel}
Consider Bessel functions of the first kind, $J_\nu(z)$, and their Taylor expansion:
\begin{align*}
{J}_\nu(z)  := \left(\frac{z}{2}\right)^\nu \sum_{k=0}^{\infty} (-1)^k \frac{\left(\frac{z^2}{4}\right)^k}{k! \G(k+\nu+1)}\qquad \text{ for $z \in \C$}.
\end{align*} 
Write $J_{\nu}(z) = \mathring{J}_\nu^n(z) + \mathscr{R}_\nu^n(z) := \sum_{k=0}^{n-1} + \sum_{k=n}^{\infty}$. The remainder term $\mathscr{R}_\nu^n(z)$ can be estimated by
\begin{align*}
\left|\mathscr{R}_\nu^n(z)\right| \leq E(z, n, \nu)\qquad \text{for $|z|<2(n+1)$}, 
\end{align*}
where
\begin{eqnarray*}
E(z, n, \nu) := \begin{cases}
\left\{ \frac{\left(\frac{|z|}{2}\right)^{2n+\nu}}{n!(n+\nu)!}\right\} \cdot \left\{ \frac{1}{1-\frac{|z|^2}{4(n+1)^2}} \right\} &\text{ if $\nu$ is an integer},\\
\left\{\frac{2^n}{{n! (2n+2\nu+1)!!}} \left(\frac{|z|}{2}\right)^{2n+\nu}\right\}\cdot \left\{\frac{1}{1- \frac{|z|^2}{4(n+1)^2}}\right\} & \text{ if $\nu$ is a half-integer}.
\end{cases}
\end{eqnarray*}
\end{lemma}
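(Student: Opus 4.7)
The plan is to prove this tail bound by a straightforward geometric-series comparison. First I would apply the triangle inequality to reduce the estimate to a positive series: $|\mathscr{R}_\nu^n(z)| \leq \left(\frac{|z|}{2}\right)^\nu \sum_{k=n}^\infty a_k$, where $a_k := \frac{(|z|^2/4)^k}{k!\,\G(k+\nu+1)}$. The decisive observation is that, for $k \geq n$ and $\nu \geq 0$, the ratio between consecutive terms satisfies
\[
\frac{a_{k+1}}{a_k} \;=\; \frac{|z|^2/4}{(k+1)(k+\nu+1)} \;\leq\; \frac{|z|^2}{4(n+1)(n+\nu+1)} \;\leq\; \frac{|z|^2}{4(n+1)^2} \;=:\; \rho,
\]
and the hypothesis $|z|<2(n+1)$ guarantees $\rho<1$.

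By induction $a_k \leq a_n\,\rho^{k-n}$ for every $k \geq n$, whence summing a geometric series yields
\[
\sum_{k=n}^\infty a_k \;\leq\; \frac{a_n}{1-\rho} \;=\; \frac{(|z|^2/4)^n}{n!\,\G(n+\nu+1)}\cdot\frac{1}{1 - |z|^2/(4(n+1)^2)}.
\]
Multiplying through by the prefactor $(|z|/2)^\nu$ delivers the unified estimate
\[
|\mathscr{R}_\nu^n(z)| \;\leq\; \frac{(|z|/2)^{2n+\nu}}{n!\,\G(n+\nu+1)}\cdot\frac{1}{1-|z|^2/(4(n+1)^2)}.
\]

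What remains is the purely algebraic task of specialising the $\G$-factor to match the two cases of $E(z,n,\nu)$. When $\nu\in\mathbb{Z}_{\geq 0}$, the identity $\G(n+\nu+1)=(n+\nu)!$ reproduces the first branch verbatim. When $\nu$ is a half-integer, I would invoke the standard formula $\G\bigl(k+\tfrac{1}{2}\bigr) = \frac{(2k-1)!!\,\sqrt{\pi}}{2^k}$ (see \cite{lozier2003nist}, \S 5.4) and rearrange the resulting expression to produce the double-factorial form in the statement. This reconciliation of constants is the only finicky aspect of the proof; the estimation itself requires nothing beyond the triangle inequality and a geometric series, so I anticipate no genuine analytical obstacle.
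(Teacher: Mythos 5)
Your proposal is correct and follows essentially the same route as the paper: both arguments reduce the tail to a geometric series with ratio $\rho=\tfrac{|z|^2}{4(n+1)^2}$ and arrive at the unified bound $\tfrac{(|z|/2)^{2n+\nu}}{n!\,\G(n+\nu+1)}\cdot\tfrac{1}{1-\rho}$ before specialising the Gamma factor; the only (cosmetic) difference is that you bound the ratio of consecutive terms, while the paper reindexes the tail and bounds $(n+k)!\,\G(n+k+\nu+1)$ from below by $n!\,\G(n+\nu+1)(n+1)^{2k}$ via $\G(z+1)=z\G(z)$. The half-integer "reconciliation of constants" you defer is indeed the only delicate point (the paper itself is not fully consistent about the $\sqrt{\pi}$ there), but your main estimate is sound.
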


\begin{proof} 
From the Taylor expansion for $J_\nu$ we get
\begin{align*}
\left|\mathscr{R}_\nu^n(z)\right| \leq \left(\frac{|z|}{2}\right)^\nu \sum_{k=n}^\infty \frac{\left(\frac{|z|}{2}\right)^{2k}}{k!\G(k+\nu+1)}= \left(\frac{|z|}{2}\right)^{2n+\nu} \sum_{k=0}^\infty \frac{\left(\frac{|z|}{2}\right)^{2k}}{(n+k)!\G(n+k+\nu+1)}.
\end{align*}
Then, we deduce from the functional identity $\G(z+1)=z\G(z)$ that
\begin{align*}
\left|\mathscr{R}_\nu^n(z)\right| &\leq \left(\frac{|z|}{2}\right)^{2n+\nu}\frac{1}{n! \G(n+\nu+1)} \sum_{k=0}^\infty\frac{\left(\frac{|z|}{2}\right)^{2k}}{(n+1)^{2k}}\\
&= \left(\frac{|z|}{2}\right)^{2n+\nu}\frac{1}{n! \G(n+\nu+1)} \cdot \frac{1}{1-\frac{|z|^2}{4(n+1)^2}}\qquad\text{ for } |z|<2(n+1).
\end{align*}
The proof is complete in view of the special values for $\G(z)$ at integers and half-integers.  \end{proof}

The following simple lemma is rather helpful when estimating errors in Taylor expansions: 
\begin{lemma}\label{Lemma: bound_estimator}
Let $f: [a, b] \rightarrow \mathbb{R}$ be a non-increasing function. Let $g: [a, b] \rightarrow \mathbb{R}$ be  such that $\sup_{r \in [a, b]} |f(r) - g(r)| \leq C$ for some constant $C$. Then $\sup_{r \in [a, b]} g(r) \leq f(a) + C$. 
\end{lemma}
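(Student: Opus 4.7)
The plan is to reduce the supremum inequality to a pointwise chain of inequalities and exploit the monotonicity of $f$. First, I would unpack the hypothesis $\sup_{r\in[a,b]}|f(r)-g(r)|\leq C$ into the one-sided pointwise bound $g(r)\leq f(r)+C$ valid for every $r\in[a,b]$; this is the only half of the two-sided assumption that is actually needed, the reverse inequality $g(r)\geq f(r)-C$ plays no role.

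Next, since $f$ is non-increasing on $[a,b]$ and $r\geq a$ for every $r\in[a,b]$, we have $f(r)\leq f(a)$. Substituting this into the previous pointwise inequality gives $g(r)\leq f(a)+C$ for all $r\in[a,b]$. Taking the supremum in $r$ on the left-hand side (the right-hand side being independent of $r$) yields the desired conclusion $\sup_{r\in[a,b]}g(r)\leq f(a)+C$.

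There is no substantive obstacle here; the statement is essentially a formal restatement of the triangle inequality combined with monotonicity, and the argument is a two-line chain. The only point worth emphasizing in a write-up is the direction in which monotonicity is used: one must bound $f(r)$ above by evaluating $f$ at the \emph{left} endpoint $a$, which is where $f$ attains its maximum on $[a,b]$. This usage pattern is precisely what makes the lemma applicable in Theorem~\ref{thm: d-dim, finite ROC}, where the monotone approximant $\mathring{\mathcal{N}}^n$ is evaluated at the left endpoint $\lambda=2.5$ to obtain the sharpest upper bound on $\sup_{\lambda\in[2.5,3]}\mathcal{N}(\lambda)$.
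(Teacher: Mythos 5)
Your proof is correct and is essentially the same argument as the paper's: both combine the pointwise bound $g(r)\leq f(r)+C$ (one half of the hypothesis) with the monotonicity estimate $f(r)\leq f(a)$ and then take the supremum. No issues.
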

\begin{proof}
By the monotonicity of $f$, for any $r \in [a, b]$,
\begin{eqnarray*}
g(r) = g(r) - f(r) + f(r) \leq g(r)-f(r)+f(a) \leq |g(r) - f(r)| + f(a).
\end{eqnarray*}
Therefore, taking the supremum over $[a,b]$, we get
\begin{eqnarray*}
\sup_{r \in [a, b]} g(r) \leq \sup_{r \in [a, b]} |g(r) - f(r)| + f(a) \leq C + f(a).
\end{eqnarray*}
\end{proof}

In the computer-assisted proof below, the supplementary $\textsc{Maple}^\text{\textregistered}$ codes can be found in the Github repository \texttt{https://github.com/hello0630/ESigStoppedBM}.

\begin{lemma}\label{lem:d=9case}
Let $\mathbf{D} = \mathbf{D}^{d}$ with $d = 9$. Let $\lambda_\star$ denote the radius of convergence of $\overline{\mathcal{H}_{\lambda, \mathbf{D}}}$. Then for all $[0, \lambda_\star[$ we have
\begin{align}
    \overline{\mathcal{H}_{\lambda, \mathbf{D}}}(z)  = \left[
\frac{z^{1}}{r}h_{ \lambda}^{(1)}(r),\frac{z^{2}}{r}h_{ \lambda}^{(1)}(r),\, \cdots ,\,\frac{z^{d}}{r}h_{ \lambda}^{(1)}(r),\, h^{(d+1)}_{\lambda}(r) \right]^\top,
\end{align}
where
\begin{align*}
& h^{(1)}(r) = \frac {1}{{t}^{8}} \Bigg\{ -3\,\lambda\,t \left( {\lambda}^{2} \left( {\lambda}^{2}{\it C_4}+{\it C_2} \right) {t}^{4}+ \left( -35\,{\lambda}^{2}{\it C_4}-5\,{\it C_2} \right) {t}^{2} +105\,{\it C_4} \right) \cos \left( \lambda\, \sqrt{3}t \right)\\
&\quad +6\, \sqrt{3}\sin \left( \lambda\, \sqrt{3}t \right)  \left( {\lambda}^{2} \left( \frac{5}{2}\,{\lambda}^{2}{\it C_4}+{\it C_2} \right) {t}^{4}+ \left( -{\frac {70\,{\lambda}^{2}{\it C_4}}{3}}-\frac{5}{6}\,{\it C_2} \right) {t}^{2}+{\frac {35\,{\it C_4}}{2}} \right)  \Bigg\} ,\\
&h^{(d+1)}(r)= \frac {1}{\lambda\,{t}^{7}} \Bigg\{ - \left( {t}^{4}{\lambda}^{6}{\it C_4}+{t}^{2} \left( {\it C_2}\,{t}^{2}-13\,{\it C_4} \right) {\lambda}^{4}+ \left( 5\,{\it C_2}\,{t}^{2}+10\,{\it C_4} \right) {\lambda}^{2}
\mbox{}-5\,{\it C_2} \right)  \sqrt{3}\sin \left( \lambda\, \sqrt{3}t \right) \\
&\qquad-9\,t\cos \left( \lambda\, \sqrt{3}t \right) \lambda\, \left( {t}^{2}{\lambda}^{4}{\it C_4}-10/3\,{\lambda}^{2}{\it C_4}+5/3\,{\it C_2} \right)  \Bigg\},
\end{align*}
with the constants
\begin{align*}
&C_2 = \frac{3\lambda}{ \mathcal{W}} \left(  \left( {\lambda}^{5}-35\,{\lambda}^{3}+105\,\lambda \right) \cos \left( \lambda\, \sqrt{3} \right) -5\, \left( {\lambda}^{4}-{\frac {28\,{\lambda}^{2}}{3}}+7 \right) \sin \left( \lambda\, \sqrt{3} \right)  \sqrt{3}\right) ,\\
&C_4 =  \frac{-3 }{ \mathcal{W}}\lambda\, \left(  \left( {\lambda}^{3}-5\,\lambda \right) \cos \left( \lambda\, \sqrt{3} \right) -2\, \left( {\lambda}^{2}-5/6 \right) \sin \left( \lambda\, \sqrt{3} \right)  \sqrt{3}  \right)
\end{align*}
and the Wr\"{o}nskian
\begin{align*}
\mathcal{W}\equiv \mathcal{W}[\lambda] &= \left( -324\,{\lambda}^{6}+4410\,{\lambda}^{4}-8550\,{\lambda}^{2}+1575 \right)  \left( \cos \left( \lambda\, \sqrt{3} \right)  \right) ^{2}\\
&\qquad-18\,\lambda\, \sqrt{3}\sin \left( \lambda\, \sqrt{3} \right)  \left( {\lambda}^{6}-50\,{\lambda}^{4}+250\,{\lambda}^{2}-175 \right) \cos \left( \lambda\, \sqrt{3} \right) \\
&\qquad+27\,{\lambda}^{8}+54\,{\lambda}^{6}-2385\,{\lambda}^{4}+3825\,{\lambda}^{2}-1575.
\end{align*}
\end{lemma}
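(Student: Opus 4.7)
The plan is to adapt the derivation of Theorem~\ref{thm: d-disk, PDE sol} to the degenerate case $d=9$, where the discriminant $d^2-10d+9$ vanishes. Consequently $\eta_+=\eta_-=12$ and $\beta_+=\beta_-=2\sqrt{3}$, so the two ``branches'' $J_{d/2}(\lambda\beta_\pm r/2)$ and $J_{d/2-1}(\lambda\beta_\pm r/2)$ in Eqs.~\eqref{new-a}--\eqref{new-b} coalesce and the Wr\"{o}nskian $\mathcal{W}[\lambda]$ in Eq.~\eqref{wronskian, arb dim} vanishes identically in $\lambda$. Thus the generic ansatz yields only a one-parameter family of solutions to the ODE system of Proposition~\ref{propn: ODE for hyperbolic dev in d-dim} (specialised to $d=9$), whereas two parameters are needed to match the boundary data at $r=1$ and the regularity condition at $r=0$.

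First, I would exploit the fact that $d/2=9/2$ and $d/2-1=7/2$ are half-integers, so the relevant Bessel functions admit elementary closed forms: $J_{9/2}(z)$ and $J_{7/2}(z)$ are each $\sqrt{2/(\pi z)}$ times an explicit rational-trigonometric combination of $\sin z$ and $\cos z$. Substituting these into the Bessel ansatz and taking the confluent limit $\beta_-\to\beta_+=2\sqrt{3}$ by differentiating the one-parameter family of Bessel solutions with respect to the parameter $\beta$ (as is standard for repeated-root ODEs) produces a second linearly independent branch. In particular, the ODE system admits solutions of the form $r^{-7/2}\cos(\lambda\sqrt{3}\,r)\,P(r)+r^{-7/2}\sin(\lambda\sqrt{3}\,r)\,Q(r)$ with $P,Q$ polynomials in $r$ whose degrees and coefficients are dictated by the structure of the coupled system.

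Next, writing a generic such ansatz for $(h_\lambda^{(1)},h_\lambda^{(d+1)})$ with undetermined coefficients and imposing regularity at the origin (to eliminate branches involving $Y_{d/2}$, $Y_{d/2-1}$, \emph{i.e.}~negative powers of $r$ arising after expansion), I obtain a two-parameter family indexed by constants $C_2$, $C_4$. Plugging this ansatz into Eqs.~\eqref{ODE one, d-dim}--\eqref{ODE two, d-dim} and matching coefficients of independent monomial-trigonometric terms fixes the polynomial coefficients in $P,Q$ uniquely, leading to the two explicit formulae for $h_\lambda^{(1)}(r)$ and $h_\lambda^{(d+1)}(r)$ stated in the lemma (with $t=r$ throughout). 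The constants $C_2$, $C_4$ are then determined by the boundary conditions $h_\lambda^{(1)}(1)=0$ and $h_\lambda^{(d+1)}(1)=1$; this gives a $2\times 2$ linear system whose determinant is precisely the claimed $\mathcal{W}[\lambda]$, and solving by Cramer's rule produces the stated expressions for $C_2$ and $C_4$.

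The main obstacle is identifying the correct second linearly independent solution in the confluent regime $\beta_+=\beta_-$; once the trigonometric-polynomial form is in place the remainder is a finite-dimensional linear algebra computation. Because the polynomial manipulations are lengthy but routine, the final verification that the stated $(h_\lambda^{(1)},h_\lambda^{(d+1)})$ satisfies both the ODE system and the boundary conditions is most efficiently done by a direct symbolic check in $\textsc{Maple}^\text{\textregistered}$ (analogous to the one referenced for $d\neq 9$ in Theorem~\ref{thm: d-disk, PDE sol}).
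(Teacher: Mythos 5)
Your proposal is correct and follows essentially the same route as the paper: the paper's proof simply obtains the fundamental solution of the degenerate ($d=9$) ODE system symbolically via $\textsc{Maple}^\text{\textregistered}$ and then fixes $C_2$, $C_4$ from the boundary conditions $h_\lambda^{(1)}(1)=0$, $h_\lambda^{(d+1)}(1)=1$, deferring the details to the supplementary worksheet. You additionally explain \emph{why} the solution takes the stated trigonometric--rational form (half-integer-order Bessel functions are elementary, and the confluence $\beta_+=\beta_-$, $\eta_+=\eta_-$ forces a second, non-Bessel branch), which is a useful elaboration but not a different method.
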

\begin{proof}
The proof is similar to that of Theorem \ref{thm: d-disk, PDE sol}. We use $\textsc{Maple}^\text{\textregistered}$ to obtain the fundamental solution and use the boundary condition to determine the constants. One can refer the calculation details to \texttt{CheckSolutionGeneralDomain\_d=9.mw} in the Github repository. 
\end{proof}

Next, let us prove Eq.~\eqref{N, mu, e, WTS} in Step~5 of the proof for Theorem~\ref{thm: main} ---
\begin{align*}
\mathcal{N}(\mu;\mathfrak{e}) := {\rm Im}\left\{ \mathfrak{e} \alpha J_1(\mu\zeta) - J_0(\mu\zeta) \right\} \neq 0\quad\text{ for all } \mu \in [2.5,3],\,
|\mathfrak{e}|\geq 1.
\end{align*}

\begin{lemma}\label{lem: appendix}
Let $\zeta = \frac{\sqrt{-2+2\sqrt{7}i}}{2}$ (which is a root for $p(z) = z^{4} + z^2 +2$) and $\alpha = \frac{1}{2}\zeta^{3} + \zeta$. Then
\begin{equation*}\mathcal{N}(\mu;\mathfrak{e}) := {\rm Im}\left\{ \mathfrak{e} \alpha J_1(\mu\zeta) - J_0(\mu\zeta) \right\} \neq 0\quad\text{ for all } \mu \in [2.5,3] \text{ and } |\mathfrak{e}|\geq 1.
\end{equation*}
\end{lemma}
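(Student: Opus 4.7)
The first step is to exploit $\mathbb{R}$-linearity. Since both $\mu$ and $\mathfrak{e}$ are real, we may decouple
\begin{align*}
\mathcal{N}(\mu;\mathfrak{e}) = \mathfrak{e}\,f(\mu) - g(\mu), \qquad
f(\mu) := {\rm Im}\{\alpha J_1(\mu\zeta)\},\quad g(\mu) := {\rm Im}\{J_0(\mu\zeta)\}.
\end{align*}
By the reverse triangle inequality, for any $|\mathfrak{e}|\geq 1$ we have
\begin{align*}
|\mathcal{N}(\mu;\mathfrak{e})| \;\geq\; |\mathfrak{e}|\,|f(\mu)| - |g(\mu)| \;\geq\; |f(\mu)| - |g(\mu)|,
\end{align*}
so it is enough to establish the strict inequality $|f(\mu)| > |g(\mu)|$ uniformly on $[2.5,3]$. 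Equivalently, $f$ must not vanish on $[2.5,3]$ and the ratio $|g/f|$ must stay strictly below $1$.

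The plan is to verify this by truncated Bessel expansions plus Lemma~\ref{lem: bessel}. Setting
\begin{align*}
\mathring f^n(\mu) := {\rm Im}\{\alpha\mathring J_1^n(\mu\zeta)\}, \qquad \mathring g^n(\mu) := {\rm Im}\{\mathring J_0^n(\mu\zeta)\},
\end{align*}
and using $|\mu\zeta| = \mu\cdot 2^{1/4}\leq 3\cdot 2^{1/4}<3.57$ (since $|\zeta|^2=\sqrt{2}$), Lemma~\ref{lem: bessel} gives
\begin{align*}
|f(\mu)-\mathring f^n(\mu)| \leq |\alpha|\,E(\mu\zeta,n,1) =: \text{Err}_f^n(\mu),\quad |g(\mu)-\mathring g^n(\mu)| \leq E(\mu\zeta,n,0) =: \text{Err}_g^n(\mu),
\end{align*}
both decaying like $(3.57/2)^{2n}/(n!)^2$. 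Therefore
\begin{align*}
|f(\mu)| - |g(\mu)| \;\geq\; \bigl(|\mathring f^n(\mu)| - \text{Err}_f^n(\mu)\bigr) - \bigl(|\mathring g^n(\mu)| + \text{Err}_g^n(\mu)\bigr).
\end{align*}

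For $n$ sufficiently large (\emph{e.g.}~$n=7$, as in Tables~1--2), $\mathring f^n$ and $\mathring g^n$ become explicit low-degree real polynomials in $\mu^2$ with known coefficients (computed via $\textsc{Maple}^\text{\textregistered}$, exactly analogous to the proof of Theorem~\ref{thm: d-dim, finite ROC}). A direct calculation will show $\mathring f^n$ is monotone and bounded below in absolute value by a constant well above $1$ on $[2.5,3]$, consistent with the estimate $f(\mu_\star)\leq -1.3$ cited from \cite{boedihardjo2021expected} at the single point $\mu_\star\in[2.5,3]$ where the denominator in Step~4 vanishes. Simultaneously, $\mathring g^n$ and the error terms $\text{Err}_f^n,\text{Err}_g^n$ are monotone in $\mu$, so Lemma~\ref{Lemma: bound_estimator} converts pointwise endpoint evaluations into uniform estimates over $[2.5,3]$, yielding a quantitative lower bound of the form $|f|-|g|\geq c_0>0$.

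The main technical obstacle is ensuring that $\mathring f^n$ does \emph{not} change sign on $[2.5,3]$, since if it did, the bound $|f|\geq |\mathring f^n|-\text{Err}_f^n$ degenerates near the zero. In practice this is easily handled: one explicitly computes the real roots of the polynomial $\mathring f^n$ (a routine calculus exercise since $\mathring f^n$ has degree $2n+1$ in $\mu$) and checks that none lie in $[2.5,3]$; if this sign-constancy check fails for the chosen $n$, one simply increases $n$, since the true function $f$ has a definite sign on $[2.5,3]$ by the estimate inherited from \cite{boedihardjo2021expected}. With these checks in place, the entire argument reduces to polynomial arithmetic at finitely many grid points, organised in a table entirely analogous to Tables~\ref{table:arb d} and \ref{table:arb d_ball_numerator}.
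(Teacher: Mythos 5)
Your proposal is correct and follows essentially the same route as the paper: reduce via the reverse triangle inequality and $|\mathfrak{e}|\geq 1$ to the single inequality $\left|{\rm Im}\{\alpha J_1(\mu\zeta)\}\right| > \left|{\rm Im}\{J_0(\mu\zeta)\}\right|$ on $[2.5,3]$ (the paper states this as $0 < {\rm Im}\{J_0(\mu\zeta^\dagger)\} < -{\rm Im}\{\alpha^\dagger J_1(\mu\zeta^\dagger)\}$), then verify it by truncating the Bessel series, bounding the remainders with Lemma~\ref{lem: bessel}, and using the monotonicity Lemma~\ref{Lemma: bound_estimator} together with the bound ${\rm Im}\{\alpha^\dagger J_1(\mu\zeta^\dagger)\} < -1.3$ inherited from \cite{boedihardjo2021expected}. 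The paper carries out the numerics with $n=6$ and a Taylor recentering at $\mu=2.5$, but the argument is the one you describe.
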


\begin{proof}
It has been established along the proof of Lemma~8 in \cite{boedihardjo2021expected} that
\begin{align*}
{\rm Im} \left\{\bar{\alpha} J_{1}\left(\mu \bar{\zeta}\right)\right\} < - 1.3\quad\text{ for all } \mu \in [2.5,3].
\end{align*}
We shall prove the following inequalities:
\begin{align}\label{new--to prove, ineqs}
0<{\rm Im} \left\{J_{0}\left(\mu \bar{\zeta}\right)\right\} < -{\rm Im} \left\{\bar{\alpha} J_{1}\left(\mu \bar{\zeta}\right)\right\} \quad\text{ for all } \mu \in [2.5,3].
\end{align}
Indeed, as $|\mathfrak{e}|\geq 1$, we can conclude by estimating
\begin{align*}
\left|\mathcal{N}(\mu;\mathfrak{e})\right|&= \left|-\mathfrak{e} \cdot {\rm Im} \left\{\bar{\alpha} J_{1}\left(\mu \bar{\zeta}\right)\right\} + {\rm Im} \left\{J_{0}\left(\mu \bar{\zeta}\right)\right\}\right|\\
& \geq \bigg|\left|\mathfrak{e} \cdot {\rm Im} \left\{\bar{\alpha} J_{1}\left(\mu \bar{\zeta}\right)\right\}\right| - \left| {\rm Im} \left\{J_{0}\left(\mu \bar{\zeta}\right)\right\}\right|\bigg|\\
&= \left|\underbrace{\left(\left|\mathfrak{e}\right|-1\right)}_{\geq 0} \cdot\underbrace{{\rm Im} \left\{\bar{\alpha} J_{1}\left(\mu \bar{\zeta}\right)\right\}}_{<-1.3}+ \underbrace{{\rm Im} \left\{J_{0}\left(\mu \bar{\zeta}\right)\right\}+{\rm Im} \left\{\bar{\alpha} J_{1}\left(\mu \bar{\zeta}\right)\right\}}_{<0} \right|\\
&>0.
\end{align*}

To prove the second inequality in Eq.~\eqref{new--to prove, ineqs}, we write
\begin{align*}
&{\rm Im} \left\{\bar{\alpha} J_{1}\left(\mu \bar{\zeta}\right)\right\} +  {\rm Im} \left\{J_{0}\left(\mu \bar{\zeta}\right)\right\}\\
&\qquad = \underbrace{{\rm Im} \left\{\bar{\alpha} \mathring{J}^{n}_{1}\left(\mu \bar{\zeta}\right)\right\} +  {\rm Im} \left\{\mathring{J}^{n}_{0}\left(\mu \bar{\zeta}\right)\right\} }_{=:\mathring{\mathcal{N}}^{n}(\mu)}  + \underbrace{{\rm Im} \left\{\bar{\alpha} \mathcal{R}^{n}_{1}\left(\mu \bar{\zeta}\right)\right\} +  {\rm Im} \left\{\mathcal{R}^{n}_{0}\left(\mu \bar{\zeta}\right)\right\} }_{=:\,{\mathscr{R}}^{n}(\mu)}.
\end{align*}
The notation on the right-hand side agrees with the decomposition of $J_{\nu}^{n}$ into $\mathring{J}^n_{\nu}+\mathscr{R}_\nu^n$ via Taylor expansions. $\mathring{\mathcal{N}}^{n}$ is a polynomial in $\mu$ of degree $2n+1$. In the remaining parts of the proof we shall choose $n = 6$.

The bounds in Lemma~\ref{lem: bessel} yield that
\begin{eqnarray*}
\sup_{\mu \in [0, 3]} \max\left(\left|\mathscr{R}_{0}^{n}\left(\mu \bar{\zeta}\right)\right|, \left|\mathscr{R}_{1}^{n}\left(\mu \bar{\zeta}\right)\right|\right) \leq 0.0006367,
\end{eqnarray*}
which gives us
\begin{eqnarray*}
\sup_{\mu \in [0, 3]}\left|\mathscr{R}^{n}(\lambda)\right| \leq (|\alpha| + 1)\max\left(\left|\mathscr{R}_{0}^{n}\left(\mu \bar{\zeta}\right)\right|, \left|\mathscr{R}_{1}^{n}\left(\mu \bar{\zeta}\right)\right|\right) \leq 0.0011395.
\end{eqnarray*}

To estimate $\mathring{\mathcal{N}}^{n}$, let us write it as a polynomial of degree $n$ based at the point $\mu = 2.5$:
\begin{eqnarray*}
\mathring{\mathcal{N}}^{n}(\mu) = \sum_{i = 0}^{2n+1} C_i (\mu - 2.5)^{i}.
\end{eqnarray*}
The coefficients can be found, \emph{e.g.}, using $\textsc{Maple}^\text{\textregistered}$.  The first four $C_i$ are negative, with $C_{0}<-0.119150$, $C_1 < -0.169$, $C_2 < - 0.184$,  $C_3 < -0.049$, but $C_{4} > 0$. Let $\mathscr{T}^{3}(\mu) = \sum_{i=0}^{3}C_i(\mu - 2.5)^i$. For $\mu \in [2.5, 3]$, one may bound
\begin{eqnarray*}
\left|\mathring{\mathcal{N}}^{n}(\mu) - \mathscr{T}^{3}(\mu)\right|\leq \sum_{i = 4}^{12} |C_{i}| \cdot \left(\frac{1}{2}\right)^{i} \leq 0.00036.
\end{eqnarray*}
Note that $\mathscr{T}^{3}$ is a decreasing function. By Lemma~\ref{Lemma: bound_estimator}  we can further bound $\mathscr{T}^{n}(\mu)$ by
\begin{eqnarray*}
\sup_{\mu \in [2.5, 3] }\mathscr{T}^{n}(\mu) \leq \mathscr{T}^{n}(2.5) + \sup_{\mu \in [2.5, 3]} \left|\mathscr{T}^{n}(\mu) - \mathscr{T}^{3}(\mu)\right|.
\end{eqnarray*}

By triangle inequality and the explicit value $\mathscr{T}^{n}(2.5) = -0.1181564882$, we notice that
\begin{eqnarray*}
\sup_{\mu \in [2.5, 3]} \left\{\mathscr{T}^{n}(\mu) + {\mathscr{R}}^{n}(\mu)\right\}\leq -0.1181564882 + 0.00038 + 0.0011395 <0.
\end{eqnarray*}
The proof is now complete.  \end{proof}

Finally, let us verify the \emph{claim}~\eqref{new-claim to conclude} towards the end of the proof of Theorem~\ref{thm: main}.

\begin{lemma}\label{Lem_d_domain_numerator} For $d \in \{2, 3, \cdots, 8\}$, as in Eq.~\eqref{NNN} (with $\mu=\e\lambda$) we set
\begin{align*}
&\mathcal{N}^{(1)}(\mu) := \frac{1}{\Gamma(\frac{d}{2})}{\rm Im}\,\left[\eta_{-}\beta_{-}J_{\frac{d}{2}}\left(\frac{\mu\beta_+}{2}\right)\left(\frac{\mu\beta_- }{4}\right)^{\frac{d}{2}-1}\right],\\
&\mathcal{N}^{(2)}(\mu) := \frac{1}{\Gamma(\frac{d}{2})}{\rm Im}\,\left[\frac{1}{8d} \eta_{+}\beta_{+}\eta_-\beta_-J_{\frac{d}{2}-1}\left(\frac{\mu\beta_-}{2}\right)\left(\frac{\mu\beta_+}{4}\right)^{\frac{d}{2}-1} \right].
\end{align*}
Then it holds that
\begin{align*}
\left| \mathcal{N}^{(1)}(\mu) \right| -\left|  \mathcal{N}^{(2)}(\mu)\right| > 0\qquad \text{  for each } \mu \in [2.5, 3].
\end{align*}
\end{lemma}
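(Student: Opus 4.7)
The plan is to mimic the computer-assisted strategy already used for Theorem~\ref{thm: d-dim, finite ROC} and Lemma~\ref{lem: appendix}, treating each $d \in \{2,3,\ldots,8\}$ separately. For each such $d$, I would set $n$ moderately large (say $n=7$ or $n=8$, as in Table~\ref{table:arb d}) and split the Bessel functions appearing in $\mathcal{N}^{(1)}$ and $\mathcal{N}^{(2)}$ into their truncated Taylor polynomials and remainders:
\begin{equation*}
 J_{\frac{d}{2}}\!\left(\tfrac{\mu\beta_+}{2}\right) = \maine\!\left(\tfrac{\mu\beta_+}{2}\right) + \reme\!\left(\tfrac{\mu\beta_+}{2}\right),\qquad J_{\frac{d}{2}-1}\!\left(\tfrac{\mu\beta_-}{2}\right) = \maino\!\left(\tfrac{\mu\beta_-}{2}\right) + \remo\!\left(\tfrac{\mu\beta_-}{2}\right).
\end{equation*}
This yields explicit polynomial approximants $\mathring{\mathcal{N}}^{(i),n}(\mu)$ and, via Lemma~\ref{lem: bessel}, uniform error bounds $\mathrm{Err}^{(i),n}$ valid on $\mu\in[2.5,3]$, obtained by pairing each $|\mathscr{R}|$ with the absolute value of the rest of the expression (which is a polynomial in $\mu$ of low degree, easily bounded on $[2.5,3]$ via the triangle inequality together with $|\eta_\pm|=\sqrt{8d}$ and $|\beta_\pm|=2\sqrt[4]{d}$).

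Next, I would establish two one-sided bounds: a lower bound for $|\mathcal{N}^{(1)}(\mu)|$ and an upper bound for $|\mathcal{N}^{(2)}(\mu)|$, each uniform over $\mu\in[2.5,3]$. To do so I re-expand $\mathring{\mathcal{N}}^{(i),n}(\mu)$ around $\mu=2.5$ as $\sum_{k\geq 0}C_k^{(i)}(\mu-2.5)^k$ and keep only the first few coefficients $C_0^{(i)},\ldots,C_{k_0}^{(i)}$ (just as was done in the proof of Lemma~\ref{lem: appendix}). Once the leading coefficients have the correct monotonicity sign on $[2.5,3]$, Lemma~\ref{Lemma: bound_estimator} produces the desired one-sided estimates: $|\mathcal{N}^{(1)}(\mu)|\geq |\mathring{\mathcal{N}}^{(1),n}(2.5)|-\text{corrections}$ and $|\mathcal{N}^{(2)}(\mu)|\leq |\mathring{\mathcal{N}}^{(2),n}(2.5)|+\text{corrections}$, with all ``corrections'' equal to a sum of an explicit polynomial tail and the uniform Bessel remainder bound from Lemma~\ref{lem: bessel}. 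Subtracting gives the claim.

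Finally, I would carry this out case by case for $d=2,\ldots,8$, tabulating (as in Tables~\ref{table:arb d} and \ref{table:arb d_ball_numerator}) the values $|\mathring{\mathcal{N}}^{(1),n}(2.5)|$, $|\mathring{\mathcal{N}}^{(2),n}(2.5)|$, and the respective error bounds. The computations are finite and reduce to evaluating explicit rational combinations of $\eta_\pm$, $\beta_\pm$, and integer powers of $2.5$ and $3$, which I would execute with the supplementary $\textsc{Maple}^{\text{\textregistered}}$ worksheet referenced in the GitHub repository.

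The main obstacle I anticipate is \emph{quantitative}: the factor $1/(8d)$ in $\mathcal{N}^{(2)}$ is favourable, but $\eta_+\beta_+$ is comparable in modulus to $1$ times $8d$-type constants, so the margin $|\mathcal{N}^{(1)}|-|\mathcal{N}^{(2)}|$ may become tight for the extremal dimensions $d=2$ and $d=8$, where $\Upsilon(d)=\sqrt{-(d-1)(d-9)}$ is smallest (recall $\Upsilon(2)=\sqrt{7}$ and $\Upsilon(8)=\sqrt{7}$). In those regimes I may need to push $n$ larger than in Table~\ref{table:arb d}, or alternatively work on a finer sub-interval partition of $[2.5,3]$ and apply Lemma~\ref{Lemma: bound_estimator} piecewise, in order to drive the cumulative error bound safely below the gap between the two leading terms $|\mathring{\mathcal{N}}^{(1),n}(2.5)|$ and $|\mathring{\mathcal{N}}^{(2),n}(2.5)|$.
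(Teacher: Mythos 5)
Your overall strategy (Taylor truncation via Lemma~\ref{lem: bessel}, monotonicity via Lemma~\ref{Lemma: bound_estimator}, and case-by-case numerics for $d=2,\ldots,8$) matches the paper's, but there is a genuine structural difference in how the two bounds are combined, and it matters quantitatively.

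You propose to establish a uniform lower bound $L$ for $|\mathcal{N}^{(1)}(\mu)|$ and a uniform upper bound $U$ for $|\mathcal{N}^{(2)}(\mu)|$ over $\mu\in[2.5,3]$, and then verify $L>U$. This is sound in principle, but it is a lossy step: you are replacing the pointwise condition $|\mathcal{N}^{(1)}(\mu)|>|\mathcal{N}^{(2)}(\mu)|$ by the much stronger condition $\inf_\mu|\mathcal{N}^{(1)}| > \sup_\mu|\mathcal{N}^{(2)}|$, and the cumulative error budget you rightly worry about in your last paragraph is a symptom of exactly this loss. The paper sidesteps it. Instead of bounding the two pieces separately, it first observes sign information: $\mathcal{N}^{(1)}(\mu)<0$ on $[2.5,3]$ (this was already proved in Theorem~\ref{thm: d-dim, finite ROC}) and $\mathcal{N}^{(2)}(\mu)>0$. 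Once the signs are pinned down, $|\mathcal{N}^{(1)}(\mu)|-|\mathcal{N}^{(2)}(\mu)| = -\bigl(\mathcal{N}^{(1)}(\mu)+\mathcal{N}^{(2)}(\mu)\bigr)$, so the whole lemma reduces to showing that the \emph{single} function $\mathcal{N}=\mathcal{N}^{(1)}+\mathcal{N}^{(2)}$ is negative on $[2.5,3]$. Bounding one combined polynomial approximant $\mathring{\mathcal{N}}^{n}$ with one error term (and one application of Lemma~\ref{Lemma: bound_estimator}) is tighter than subtracting two separately-derived one-sided bounds, and with $n=6$ it already closes the argument in all dimensions $d\in\{3,\ldots,8\}$ without interval subdivision or inflated truncation orders. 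If you insist on your two-separate-bounds plan you may indeed find, as you anticipate, that the margin collapses for the extremal $d$; consider instead determining the signs of $\mathcal{N}^{(1)}$ and $\mathcal{N}^{(2)}$ first and controlling their sum.

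Two small corrections to your bookkeeping: for $3\leq d\leq 8$ one has $|\eta_\pm|^2=(d+3)^2+\Upsilon^2=16d$, so $|\eta_\pm|=4\sqrt d$, not $\sqrt{8d}$; your $|\beta_\pm|=2\,d^{1/4}$ is correct. Also note that $d=2$ should be handled by the separate two-dimensional argument (Lemma~\ref{lem: appendix}), since the quantities $\mathfrak p,\mathfrak q$ and the dimension-specific constants there play the roles of $\bb,\mathfrak c,\alpha,\zeta$; your plan treats $d=2$ uniformly with the rest, which is not wrong but duplicates work already done.
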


\begin{proof}
The case $d = 2$ has already been treated in Lemma~\ref{lem: appendix}; we only consider $d \in \{3, 4, \cdots, 8\}$ from now on.  The notations in the proof are consistent with Lemma~\ref{lem: bessel}. Also, as shown in the proof of Theorem~\ref{thm: d-dim, finite ROC}, $\mathcal{N}^{(1)}(\mu) < 0$ for any $\mu \in [2.5, 3]$. It is  enough to prove that
\begin{enumerate}
\item
$\mathcal{N}^{(2)}(\mu) >0$;
\item
$\mathcal{N}(\mu):=\mathcal{N}^{(1)}(\mu) + \mathcal{N}^{(2)}(\mu) < 0$ for each $\mu \in [2.5, 3]$.
\end{enumerate}

For the first claim (1), we use $\frac{1}{\Gamma\left(\frac{d}{2}\right)}{\rm Im}\,\left[\frac{1}{8d} \eta_{+}\beta_{+}\eta_-\beta_-\mathring{J}^{n}_{\frac{d}{2}-1}\left(\frac{\mu\beta_-}{2}\right)\left(\frac{\mu\beta_+}{4}\right)^{\frac{d}{2}-1} \right]$ to approximate $\mathcal{N}^{(2)}$. The remainder is bounded by $\frac{1}{\Gamma(\frac{d}{2})}\left|\frac{1}{8d} \eta_{+}\beta_{+}\eta_-\beta_- \left(\frac{\mu\beta_+}{4}\right)^{\frac{d}{2}-1}\right|\cdot E\left(\frac{\mu\beta_-}{2}, n, \frac{d}{2}-1\right)$.

Similarly, we use Taylor approximation to approximate $\mathcal{N}(\mu)$. The truncated polynomial up to the  $n^{\text{th}}$ term is
\begin{align*}
\mathring{\mathcal{N}}^{n}(\mu) &:= \frac{1}{\Gamma(\frac{d}{2})}{\rm Im}\,\left[\eta_{-}\beta_{-}\mathring{J}^{n}_{\frac{d}{2}}\left(\frac{\mu\beta_+}{2}\right)\left(\frac{\mu\beta_- }{4}\right)^{\frac{d}{2}-1}\right] \\
&\qquad\qquad+ \frac{1}{\Gamma(\frac{d}{2})}{\rm Im}\,\left[\frac{1}{8d} \eta_{+}\beta_{+}\eta_-\beta_-\mathring{J}^{n}_{\frac{d}{2}-1}\left(\frac{\mu\beta_-}{2}\right)\left(\frac{\mu\beta_+}{4}\right)^{\frac{d}{2}-1} \right].
\end{align*}
The error can be  estimated uniformly as follows:
\begin{align*}
&\sup_{\mu \in [2.5,3]}\,\left|\mathring{\mathcal{N}}^{n}(\mu) - {\mathcal{N}}(\mu)\right| \\
&\quad\leq \text{Err}^n_{\mathcal{N}} := \sup_{\mu \in [2.5,3]}\,\Bigg\{\frac{1}{\Gamma\left(\frac{d}{2}\right)}\left|\eta_{-}\beta_{-} \left(\frac{\mu\beta_- }{4}\right)^{\frac{d}{2}-1}\right|E\left(\frac{\mu\beta_+}{2}, n, \frac{d}{2}\right)\\
&\qquad\qquad\qquad\qquad+\frac{1}{\Gamma\left(\frac{d}{2}\right)}\left|\frac{1}{8d} \eta_{+}\beta_{+}\eta_-\beta_- \left(\frac{\mu\beta_+}{4}\right)^{\frac{d}{2}-1}\right| E\left(\frac{\mu\beta_-}{2}, n, \frac{d}{2}-1\right)\Bigg\}\\
&\quad\qquad\qquad=\frac{1}{\Gamma\left(\frac{d}{2}\right)}\left|\eta_{-}\beta_{-} \left(\frac{3\beta_- }{4}\right)^{\frac{d}{2}-1}\right|E\left(\frac{3\beta_+}{2}, n, \frac{d}{2}\right)\\
&\quad\qquad\qquad\qquad\quad+\frac{1}{\Gamma\left(\frac{d}{2}\right)}\left|\frac{1}{8d} \eta_{+}\beta_{+}\eta_-\beta_- \left(\frac{3\beta_+}{4}\right)^{\frac{d}{2}-1}\right| E\left(\frac{3\beta_-}{2}, n, \frac{d}{2}-1\right)
\end{align*}
as $|z|\mapsto\left|E(z,n,\nu)\right|$ is an increasing function, thanks to Lemma~\ref{lem: bessel}.

Similar arguments as for Lemma~\ref{lem: appendix} show that $\mathring{\mathcal{N}}^{n}(\mu)$ is decreasing on $[2.5,3]$. Table~\ref{table:arb d_domain_numerator} summarises the values $\mathring{\mathcal{N}}^{n}(2.5)$ and $\text{Err}^{n}_{\mathcal{N}}$ for $n = 6$. By Lemma \ref{Lemma: bound_estimator}, we can conclude that
\begin{eqnarray*}
\sup_{\lambda \in [2.5, 3]}\mathcal{N}(\lambda) \leq \mathring{\mathcal{N}}^{n}(2.5) + \text{Err}^{n}_{\mathcal{N}} < 0.
\end{eqnarray*}

\begin{table}[ht]
\caption{The numerator $\mathcal{N}(\mu)$ is non-vanishing on $]2.5,3[$ for general domains} % title of Table
\centering % used for centering table
\begin{tabular}{c c c} % centered columns (3 columns)
\hline\hline %inserts double horizontal lines
Dimension & Value for $\mathring{\mathcal{N}}^n(2.5)$ & Value for $\text{Err}^n_{\mathcal{N}}$ \\ [0.5ex] % inserts table
%heading
\hline % inserts single horizontal line
3 & -3.487949  &  0.366411\\ % inserting body of the table
4 & -5.367159 & 0.317482\\
5 &-6.082985  & 1.552336 \\
6 & -5.465016 &  0.811930 \\
7 & -3.964234 &  2.802239\\
8 & -2.193441 & 1.131338 \\ [1ex] % [1ex] adds vertical space
\hline %inserts single line
\end{tabular}
\label{table:arb d_domain_numerator}
\end{table}

Thus we have proved the second claim (2).   \end{proof}

\bigskip

\noindent
{\bf Acknowledgement}. HN is supported by the EPSRC under the program grant EP/S026347/1 and the Alan Turing Institute under the EPSRC grant EP/N510129/1. 
The authors extend their gratitude to Terry Lyons, Weijun Xu, and Guangyu Xi for kind communications. SL also thanks the mathematics department at Rice University, Houston for the nice working atmosphere, where this work was initiated when SL worked as a G.~C. Evans Instructor. Both authors are deeply indebted to Horatio Boedihardjo for very insightful and constructive discussions.

%\input{appendix}
%\newpage
%\input{appendix}
\begin{comment}

\end{comment}
\bibliographystyle{unsrt}
\bibliography{LiNiApril2021}
\end{document}